\documentclass[12pt]{article}
\makeatletter

\makeatother
\usepackage[all,cmtip]{xy}
\usepackage{hyperref}
\hypersetup{colorlinks = true, allcolors = blue}
\usepackage{fullpage}

\usepackage{mathtools}
\usepackage{extarrows}

\usepackage[utf8]{inputenc}
\usepackage{setspace}
\usepackage{tocloft}

\usepackage{graphicx, titlesec}
\usepackage{amscd,amsmath,amsthm,amssymb,bm}

\usepackage{verbatim}
\usepackage[all]{xy}
\usepackage{color}
\usepackage{tikz, float} \usetikzlibrary {positioning}
\usetikzlibrary{calc}

\usepackage{makeidx}         
\usepackage{graphicx}        
\usepackage{multicol}        
\usepackage[bottom]{footmisc}

\usepackage{array}

\usepackage{newtxmath}      
\usepackage{theoremref}
\usepackage{url}

\makeindex

\newcommand{\id}{\text{id}}
\newcommand{\ini}{\text{in}}

\newcommand{\MT}{\mathcal T}

\newcommand{\MCM}{\mathcal M}
\newcommand{\KK}{\mathbb{K}}

\titleformat{\subsection}[runin]
{\normalfont\bfseries}{\thesubsection}{.5em}{}
%
%
%
\def\NZQ{\mathbb}               

\def\PP{{\NZQ P}}

%
%

\def\frk{\mathfrak}               

\def\Phi{{\frk n}}
\def\Phi{{\frk N}}

\def\A{{\mathcal A}}

%

%

%
\def\opn#1#2{\def#1{\operatorname{#2}}} 
%
\opn\chara{char} \opn\length{\ell} \opn\pd{pd} \opn\rk{rk}
\opn\projdim{proj\,dim} \opn\injdim{inj\,dim} \opn\rank{rank}
\opn\depth{depth} \opn\grade{grade} \opn\height{height}
\opn\embdim{emb\,dim} \opn\codim{codim}

\opn\Tr{Tr} \opn\bigrank{big\,rank}
\opn\superheight{superheight}\opn\lcm{lcm}
\opn\trdeg{tr\,deg}
\opn\reg{reg} \opn\lreg{lreg} \opn\ini{in} \opn\lpd{lpd}
\opn\size{size} \opn\sdepth{sdepth}
\opn\link{link}\opn\fdepth{fdepth}\opn\lex{lex}
\opn\LM{LM}
\opn\LC{LC}
\opn\NF{NF}
\opn\Merge{Merge}
\opn\sgn{sgn}
\opn\type{type}
%
\opn\div{div} \opn\Div{Div} \opn\cl{cl} \opn\Pic{Pic}
\opn\Prin{Prin}
\opn\op{op}
\opn\indeg{indeg} \opn\outdeg{outdeg}
\opn\red{red}
%
%
\opn\Spec{Spec} \opn\Supp{Supp} \opn\supp{supp} \opn\Sing{Sing}
\opn\Ass{Ass} \opn\Min{Min}\opn\Mon{Mon} \opn\val{val}
%
%
\opn\Ann{Ann} \opn\Rad{Rad} \opn\Soc{Soc}
%
%
 \opn\Ker{Ker} \opn\Coker{Coker} \opn\Am{Am}
\opn\Hom{Hom} \opn\Tor{Tor} \opn\Ext{Ext} \opn\End{End}
\opn\Aut{Aut} \opn\id{id}

\opn\nat{nat}
\opn\pff{pf}
\opn\Pf{Pf} \opn\GL{GL} \opn\SL{SL} \opn\mod{mod} \opn\ord{ord}
\opn\Gin{Gin} \opn\Hilb{Hilb}\opn\sort{sort}
\opn\Image{Image}
\opn\vol{Vol}
%
%
\opn\aff{aff} \opn\con{conv} \opn\relint{relint} \opn\st{st}
\opn\lk{lk} \opn\cn{cn} \opn\core{core} \opn\vol{vol}
\opn\link{link} \opn\star{star}\opn\lex{lex}\opn\set{set}
\opn\dist{dist}
\opn\gr{gr}

%
%

\def\pot#1#2{#1[\kern-0.28ex[#2]\kern-0.28ex]}

%
%
\opn\dirlim{\underrightarrow{\lim}}
\opn\inivlim{\underleftarrow{\lim}}
%
%
%

%
%
\let\to=\rightarrow

\def\Implies{\ifmmode\Longrightarrow \else
        \unskip${}\Longrightarrow{}$\ignorespaces\fi}
\def\implies{\ifmmode\Rightarrow \else
        \unskip${}\Rightarrow{}$\ignorespaces\fi}
\def\iff{\ifmmode\Longleftrightarrow \else
        \unskip${}\Longleftrightarrow{}$\ignorespaces\fi}

\let\:=\colon

\newtheorem{theorem}{Theorem}[section]
\newtheorem{lemma}[theorem]{Lemma}

\newtheorem{proposition}[theorem]{Proposition}

\theoremstyle{remark}
\newtheorem{remark}[theorem]{Remark}

\theoremstyle{definition}
\newtheorem{example}[theorem]{Example}

\newtheorem{definition}[theorem]{Definition}
\newtheorem{Notation}{Notation}[section]
\newtheorem*{theorem*}{Theorem}
\newtheorem*{corollary*}{Corollary}

\DeclareMathOperator{\Flag}{Fl}

\let\kappa=\varkappa

\def\qed{\ifhmode\textqed\fi
      \ifmmode\ifinner\quad\qedsymbol\else\dispqed\fi\fi}
\def\textqed{\unskip\nobreak\penalty50
       \hskip2em\hbox{}\nobreak\hfil\qedsymbol
       \parfillskip=0pt \finalhyphendemerits=0}
\def\dispqed{\rlap{\qquad\qedsymbol}}

%
\opn\dis{dis}
\def\pnt{{\raise0.5mm\hbox{\large\bf.}}}

\opn\Lex{Lex}
\opn\syz{{\rm syz}}
\opn\spoly{{\rm spoly}}
\opn\LM{{\rm LM}}
\opn\lm{{\rm lm}}
\opn\lcm{{\rm lcm}} \opn\A{\mathcal A}

\numberwithin{equation}{section}

\newcommand{\ul}[1]{\underline{#1}}

\DeclareMathOperator{\init}{in}

\usepackage{multirow}
\hypersetup{
  colorlinks=true,
}
\begin{document}

\title{Standard monomial theory and toric degenerations of\\
Richardson varieties in flag varieties}

\author{
Narasimha Chary Bonala, Oliver Clarke and Fatemeh Mohammadi
}

\maketitle

\begin{abstract}
We study standard monomial bases for Richardson varieties inside the flag variety. In general, writing down a standard monomial basis for a Richardson variety can be challenging, as it involves computing so-called defining chains or key tableaux. However, for a certain family of Richardson varieties, indexed by compatible permutations, we provide a very direct and straightforward combinatorial rule for writing down a standard monomial basis. We apply this result to the study of toric degenerations of Richardson varieties. In particular, we provide a new family of toric degenerations of 
Richardson varieties inside flag varieties.
\end{abstract}

\vspace{-7mm}

{\hypersetup{linkcolor=black}\setcounter{tocdepth}{1}\setlength\cftbeforesecskip{.3pt}{\tableofcontents}}

\section{Introduction}
The geometry of the flag variety heavily depends on the study of its Schubert varieties. For example, they provide an excellent way of understanding the multiplicative structure of the cohomology ring of the flag variety. In this context, it is essential to understand how Schubert varieties intersect in a general position. 
A Richardson variety in the flag variety is the intersection of a Schubert variety and an opposite Schubert variety.  In \cite{deodhar1985some} and  \cite{richardson1992intersections}, the fundamental properties of these varieties are studied, including their irreducibility.  
Many geometric properties of the flag variety and its subvarieties can be understood through standard monomial theory (SMT). For example, vanishing results of cohomology groups, normality and other singularities of Schubert varieties, see e.g.~\cite[Chapter 3]{seshadri2016introduction}. Also, the relationship between $K$-theory of the flag variety and SMT is established in \cite{lakshmibai2003richardson}.

Let $\mathbb{K}[P_J]$ be the polynomial ring on the Pl\"ucker variables $P_J$ for non-empty subsets $J$ of $\{1, \ldots, n\}$ and let $I_n\subset \mathbb{K}[P_J]$ be the Pl\"ucker ideal of the flag variety $\Flag_n$. 
The homogeneous coordinate ring of 
$\Flag_n$ is given by $\mathbb{K}[P_J]/I_{n}$. We say that a monomial $P = P_{J_1}\dots P_{J_d}$ is \textit{standard for $\Flag_n$} if $J_1 \le \dots \le J_d$.
A standard monomial basis of $\mathbb{K}[P_J]/I_{n}$ is a subset of standard monomials that forms a basis for  $\mathbb{K}[P_J]/I_{n}$ as a vector space.
Hodge in \cite{hodge1943some} provided a combinatorial rule to choose such basis for Grassmannians in terms of semi-standard Young tableaux. He also proved that such a basis is compatible with Schubert varieties. More precisely, the basis elements that remain non-zero after restriction form a basis for the quotient ring associated to the Schubert varieties. 
Hodge's work is generalised to flag varieties by Lakshmibai, Musili and Seshadri, see e.g.~\cite{seshadri2016introduction}
for a more detailed exposition.

In this paper, we investigate when the standard monomials directly restrict to a monomial basis for the Richardson variety, thus providing a particularly simple rule for determining standard monomial bases for particular Richardson varieties. A standard monomial $P = P_{J_1} \dots P_{J_d}$ \textit{restricts} to a non-zero function on the Richardson variety $X_w^v$ if and only if $v \le J_i \le w$ for all $i$. We say that such a monomial restricts to the Richardson variety $X_w^v$ and we often write these restricted monomials as semi-standard Young tableaux, see Section~\ref{sec:prelim_tableaux}.
For Richardson varieties in the Grassmannians, the restricted standard monomials always form a monomial basis. However, this is not true for an arbitrary Richardson variety in the flag variety, see Example~\ref{example1}. And so the conventional method for determining a monomial basis for the coordinate ring of a Richardson variety inside the flag variety is fairly complicated, see Section~\ref{sec:standard_monomial}. Other combinatorial methods for calculating these monomials have been explored  using so-called \textit{key tableaux} \cite{willis2011tableau}.
We restrict our attention to the family of Richardson varieties $X_w^v$, where $(v,w) \in \MT_n$ from \eqref{eq:def_TN}. 
Our main result is the following:

\begin{theorem*}[Theorem~\ref{thm:ssyt_fl_diag}]\label{thm:standardmonomials}
The restriction of standard monomials for the flag variety forms a standard monomial basis for the Richardson variety $X_w^v$, for all $(v,w) \in \MT_n$ from \eqref{eq:def_TN}.
\end{theorem*}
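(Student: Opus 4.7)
My plan is to identify the restricted Plücker standard monomials with the intrinsic SMT basis for $X_w^v$ furnished by the Lakshmibai--Musili--Seshadri theory. As recalled in Section~\ref{sec:standard_monomial}, a basis for the homogeneous coordinate ring of $X_w^v$ consists of standard monomials $P_{J_1}\cdots P_{J_d}$ whose index sequence admits a \emph{defining chain}: a weakly increasing chain of permutations $v \le \tau_1 \le \cdots \le \tau_d \le w$ in Bruhat order, with each $\tau_i$ sending the initial segment $\{1,\dots,|J_i|\}$ onto $J_i$. The pointwise condition $v \le J_i \le w$ is a priori strictly weaker than the existence of such a chain, as Example~\ref{example1} already witnesses. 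Hence the theorem reduces to showing that for $(v,w)\in\MT_n$, every pointwise-admissible sequence lifts to a defining chain.

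Linear independence of the restricted monomials is essentially inherited from the ambient theory: the restriction map $\mathbb{K}[\Flag_n]\to\mathbb{K}[X_w^v]$ factors through both $\mathbb{K}[X_w]$ and $\mathbb{K}[X^v]$, and SMT for each of these rings is classical; any linear relation on $X_w^v$ among the monomials with $v \le J_i \le w$ would pull back to a linear relation among standard monomials on $\Flag_n$, contradicting the basis property there. So only the spanning statement has genuine content.

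For spanning, given a pointwise-admissible chain $v\le J_1\le\cdots\le J_d\le w$, I would construct each $\tau_i$ canonically from the data of $v$, $w$, and $J_i$ -- for instance as the minimal-length permutation sending $\{1,\dots,|J_i|\}$ onto $J_i$ that also copies as many values of $w$ (or of $v$) as the structural constraints from \eqref{eq:def_TN} permit. The combinatorial hypothesis defining $\MT_n$ should then both certify $v\le\tau_i\le w$ and force monotonicity $\tau_i\le\tau_{i+1}$. The latter should reduce, via Ehresmann's tableau criterion, to a local verification on consecutive pairs $J_i\le J_{i+1}$, which the very definition of $\MT_n$ is tailored to guarantee; it may be convenient to organise the check by reading the sequence $(J_1,\dots,J_d)$ as a semi-standard tableau and exploiting column-strictness.

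The main obstacle, and the step where the hypothesis $(v,w)\in\MT_n$ must do real work, is exactly this Bruhat monotonicity across the chain: Bruhat order on permutations is strictly finer than componentwise containment on the associated subsets, so without the compatibility condition the lift is genuinely obstructed -- no purely local recipe producing $\tau_i$ from $J_i$ alone can succeed in general. Once defining chains are produced uniformly, the two candidate indexing sets coincide, and the proof concludes by appealing to the Lakshmibai--Musili--Seshadri basis theorem for $X_w^v$.
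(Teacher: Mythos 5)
Your high-level reduction is correct: since intrinsic standard monomials (Theorem~\ref{prop:Kim}) always satisfy $v \le J_i \le w$, the theorem amounts to showing that for $(v,w)\in\MT_n$ every pointwise-admissible sequence admits a defining chain bounded between $v$ and $w$. But the two preliminary claims that you use to frame this reduction are problematic, and the actual construction is missing.

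First, the linear-independence argument is false. You claim ``any linear relation on $X_w^v$ among the monomials with $v\le J_i\le w$ would pull back to a linear relation among standard monomials on $\Flag_n$.'' Restriction is a \emph{surjection} $\KK[\Flag_n]\to\KK[X_w^v]$ with a nontrivial kernel $I(X_w^v)/I_n$; relations on $X_w^v$ do not lift to relations on $\Flag_n$. Example~\ref{example1}, which you yourself cite, is an exact counterexample: $T_1-T_2=-P_{23}P_1\neq 0$ in $\KK[\Flag_3]$, yet $T_1-T_2$ vanishes on $X_{(312)}^{\id}$ because $P_{23}$ does. In fact you also have spanning and independence the wrong way round: spanning of the restricted monomials is automatic (the standard monomials span $\KK[\Flag_n]$, hence their non-vanishing restrictions span $\KK[X_w^v]$), whereas linear independence is precisely the nontrivial point, and it is equivalent to the inclusion ``pointwise-admissible $\Rightarrow$ intrinsic standard'' you identify.

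Second, the construction of the defining chain is where the theorem actually lives, and the proposal gives only a gesture (``the minimal-length permutation sending $\{1,\dots,|J_i|\}$ onto $J_i$ that also copies as many values of $w$ as permitted''). No single $\tau_i$ built from $J_i$, $v$, $w$ alone can work --- the minimum and maximum defining chains $w_-(T)$ and $w_+(T)$ depend on the whole tableau through an iterative greedy construction (Proposition~\ref{prop:w_three_parts}), and the verification that $v\le w_1^+$ and $w_{|T|}^-\le w$ uses the compatibility condition in \eqref{eq:def_TN} in an essential and quite specific way. The paper's proof first reduces to two-column tableaux (using quadratic generation of $I(X_w^v)$), then runs a double induction on $n$ and on $\dim X_w^v$, splitting into cases according to where $n$ sits in $v$ and $w$ relative to the column lengths, and relying on the block structure of compatible pairs (Propositions~\ref{prop:unified} and~\ref{prop:block_max_order}) and the moves $v\mapsto v'$, $w\mapsto w'$ of Lemma~\ref{lem:inductive_vw}. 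None of this machinery appears in your proposal, and the appeal to ``Ehresmann's tableau criterion'' and ``column-strictness'' does not by itself produce the required monotone lift. As it stands, the proposal identifies the right target but does not prove it.
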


We also observe a surprising relation between the pairs in $\mathcal {T}_n$ and toric degenerations of Richardson varieties.
A toric degeneration of a variety $X$ is a flat family $f: \mathcal X \to  \mathbb A^{\!{1}}$, where the special fiber (over zero) is a toric variety and all other fibers are isomorphic to $X$. In particular, some of the algebraic invariants of $X$, such as the Hilbert polynomial, will be the same for all the fibers. Hence, we can do the computations on the toric fiber.
The study of toric degenerations of flag varieties was started in \cite{gonciulea1996degenerations} by Gonciulea and Lakshmibai using
standard monomial theory. In \cite{KOGAN}, Kogan and Miller obtained toric degenerations of flag varieties using geometric methods. Moreover, in \cite{caldero2002toric} Caldero constructed such degenerations using tools from representation theory.
In \cite{kim2015richardson}, Kim studied the Gr{\"o}bner degenerations of Richardson varieties inside the flag variety, where the special fiber is the toric variety of the Gelfand-Tsetlin polytope; this is a generalisation of the results of \cite{KOGAN}. 
 We notice that the corresponding ideals of many such degenerations 
contain monomials and so their corresponding varieties are not toric.
Hence, we aim to characterise 
such toric ideals.
In particular, we explicitly describe degenerations of 
Richardson varieties inside 
flag varieties, and provide a complete characterisation for permutations leading to 
monomial-free ideals.

\begin{theorem*}[Theorem~\ref{thm:toric_degen}]
Every pair of permutations $(v,w)$ in $\mathcal{T}_n$ from \eqref{eq:def_TN} gives rise to a toric degeneration of the Richardson variety $X_w^v$ in the flag variety.
\end{theorem*}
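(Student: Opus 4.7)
The plan is to build a Gröbner degeneration of $X_w^v$ using a weight vector $\omega$ on the Plücker variables that induces the diagonal term order, in the spirit of Kim's degeneration in \cite{kim2015richardson}. Such a weight produces a flat family $f\colon\mathcal{X}\to\mathbb{A}^{1}$ whose general fiber is $X_w^v$ and whose special fiber has coordinate ring $\mathbb{K}[P_J]/\init_\omega(I_{v,w})$, where $I_{v,w}$ is the homogeneous ideal of the Richardson variety inside $\Flag_n$. The task is therefore reduced to showing that for $(v,w)\in \MT_n$, the initial ideal $\init_\omega(I_{v,w})$ is a toric ideal, i.e.\ a prime binomial ideal.

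The first step is to check that $\init_\omega(I_{v,w})$ contains no monomial. This is exactly where Theorem~\ref{thm:ssyt_fl_diag} is used. By that result, the restricted standard monomials $P_{J_1}\cdots P_{J_d}$ with $v\le J_i\le w$ form a $\mathbb{K}$-basis of $\mathbb{K}[P_J]/I_{v,w}$. Under the diagonal term order, distinct standard monomials have distinct leading terms. Since a Gröbner degeneration is flat, the Hilbert function of $\mathbb{K}[P_J]/\init_\omega(I_{v,w})$ equals that of $\mathbb{K}[P_J]/I_{v,w}$, and so the leading monomials of the restricted standard monomials form a $\mathbb{K}$-basis of $\mathbb{K}[P_J]/\init_\omega(I_{v,w})$. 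In particular, no leading monomial lies in $\init_\omega(I_{v,w})$; and every other monomial in the $(v,w)$-Plücker variables is congruent, modulo a binomial straightening relation, to such a leading term. Hence $\init_\omega(I_{v,w})$ is monomial-free.

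Next, to conclude that the special fiber is a toric variety, I would identify the set of leading monomials with the lattice points of a Gelfand--Tsetlin--type polytope $\Delta_{v,w}$ cut out by the constraints $v\le J\le w$. These lattice points form a pointed affine semigroup $S_{v,w}$, and the combinatorial correspondence provided by the standard monomial basis gives an isomorphism between $\mathbb{K}[P_J]/\init_\omega(I_{v,w})$ and the semigroup algebra $\mathbb{K}[S_{v,w}]$. Since $\mathbb{K}[S_{v,w}]$ is a prime binomial algebra, this identifies $\init_\omega(I_{v,w})$ with the toric ideal of $S_{v,w}$, and the degeneration is toric.

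The main obstacle will be the third step, namely upgrading the monomial-freeness of $\init_\omega(I_{v,w})$ to the statement that it is a toric (equivalently, prime binomial) ideal, and not just an arbitrary binomial ideal. I would handle this by explicitly producing a generating set of quadratic binomials coming from the straightening relations of $\Flag_n$, restricted to those Plücker variables $P_J$ with $v\le J\le w$, and arguing via the combinatorial rigidity built into the defining condition of $\MT_n$ that no further relations are needed. The $\MT_n$ hypothesis is precisely what ensures that the straightening relations close up within the $(v,w)$-window, so that the restriction of the initial ideal of the flag variety to $X_w^v$ remains generated by binomials and coincides with the toric ideal of $S_{v,w}$.
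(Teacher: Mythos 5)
Your proposal correctly identifies the two ingredients that drive the argument — flatness of a Gröbner degeneration (so Hilbert functions are preserved) and the count $|SSYT_d(v,w)|$ from Theorem~\ref{thm:ssyt_fl_diag} — but the logical glue between them is missing, and the argument you sketch for monomial-freeness of $\init_\omega(I(X_w^v))$ does not hold up. The phrase ``distinct standard monomials have distinct leading terms'' is vacuous: a standard monomial is already a monomial, so it equals its own leading term. More seriously, the step ``the leading monomials of the restricted standard monomials form a basis of $R/\init_\omega(I(X_w^v))$, hence no such monomial lies in $\init_\omega(I(X_w^v))$'' requires that the restricted $SSYT$ monomials \emph{span} $R/\init_\omega(I(X_w^v))$, which has not been established; even if they did, the ideal could still contain other monomials. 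And the subsequent claim that every other monomial reduces via a \emph{binomial} relation assumes exactly what needs to be proved.

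What the paper actually does is sandwich $\init(I(X_w^v))$ between two ideals that are more tractable. One introduces the restriction $\init(I_n)|_{T_w^v}$ of the initial ideal of the flag variety and proves (Lemma~\ref{lem:J_1_subset_J_2_flag}) that $\init(I_n)|_{T_w^v}\subseteq\init(I(X_w^v))$ always, and that $\init(I_n)|_{T_w^v}$ is monomial-free if and only if it equals the kernel $\ker(\phi_n|_w^v)$ of the restricted monomial map, which is manifestly a toric ideal. The classification result you do not mention — Theorem~\ref{thm:toric_fl_rich_diag}, which says $\init(I_n)|_{T_w^v}$ is monomial-free exactly when $(v,w)\in\MT_n$ — is the input that makes this work for the given pairs. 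Because $\ker(\phi_n|_w^v)$ is generated by binomials exchanging row-wise equal tableaux, $SSYT_d(v,w)$ is a monomial basis for $R/\ker(\phi_n|_w^v)$ in each degree $d$. Theorem~\ref{thm:ssyt_fl_diag} plus flatness then force the Hilbert function of $R/\init(I(X_w^v))$ to match, so the containment is an equality and $\init(I(X_w^v))=\ker(\phi_n|_w^v)$ is toric. The ``combinatorial rigidity of $\MT_n$'' you allude to is precisely this classification theorem, which your proposal treats as a plausibility argument rather than as the proved lemma that it is. Without it — and without the explicit comparison ideal $\init(I_n)|_{T_w^v}$ — the proposal does not close.
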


The toric degenerations that we construct are Gr\"obner degenerations of the ideal of the Richardson variety with respect to a weight vector which we define in Sections~\ref{subsec:flag} and \ref{subsec:X}. 
Our goal is to study the initial ideal $\init(I(X_w^v))$ for each Richardson variety $X_w^v$ with respect to this weight vector. To do this, we consider the result for the flag variety which is proved in \cite{OllieFatemeh2, miller2004combinatorial}. More precisely, the corresponding initial ideal is \textit{toric} (binomial and prime), see Theorem~\ref{thm:Pure}. We write the initial of the Pl\"ucker ideal as $\init(I_n) = \ker(\phi_n)$, which is equal to the kernel of a monomial map $\phi_n$, see \eqref{eqn:monomialmapflags}. For each Richardson variety $X_w^v$ we construct the \textit{restriction} of $\init(I_n)$ to the variables $\{P_J : J \in T_w^v \}$, which are the variables which do not vanish on the Richardson variety. We can readily obtain the generators of the restriction $\init(I_n)|_{T_w^v}$ using Lemma~\ref{lem:elim_ideal_gen_set}, in particular this ideal is generated by degree two monomials and binomials. 
Our degeneration is known as the \textit{diagonal} Gr\"obner degeneration. In Remark~\ref{sec:compareKim's}, we consider the degenerations in \cite{kim2015richardson} which correspond to the \textit{antidiagonal} Gr\"obner degeneration. We show that our methods can be used to generalise these results and find further toric degenerations.

\medskip

We have summarised our approach to understanding the relationships between the aforementioned ideals, namely in$(I_n)|_{T_w^v}$, in$(I(X_w^v))$ and ker$(\phi_n|_w^v)$ in the following diagram.
\begin{equation}\label{diagram:intro_commutative}
\begin{aligned}
\xymatrixcolsep{3.5pc}
 \xymatrix{
    {I_n} 
    \ar@{~>}[r]^-{\text{initial ideal}}
    \ar@{~>}[d]_{\text{restriction}} &
    {\init(I_n)} 
    \ar@{~>}[d]^{\text{restriction}}
    \ar@2{-}[r]^-{\rm Theorem~\ref{thm:Pure}} &
    {\ker(\phi_n) 
    \subseteq
    \KK[P_J : J \subseteq [n]] 
    \xlongrightarrow{\phi_n}
    \KK[x_{i,j}]}
    \ar@{~>}[d]^-{\text{restriction of $\phi_n$}}
    \\
    {I(X_w^v)} 
    \ar@{~>}[r]_-{\text{initial ideal}} 
    & 
    {\init(I_n)|_{T_w^v}}
    \ar@2{-}[r]^-{\textcolor{blue}{{\bm ?}}} &
    {\ker(\phi_n|_w^v)
    \subseteq
    \KK[P_J : J \in T^v_{w}] 
    \xlongrightarrow{\phi_n|_w^v}
    \KK[x_{i,j}]} \\
    }
\end{aligned}
\end{equation}

Our method is to consider all possible pairs of permutations $(v,w)$ and determine whether the equality labelled by `{\bf ?}' holds. We give the following complete classification:
\begin{theorem*}[Theorem~\ref{thm:toric_fl_rich_diag}]
The ideal $\init(I_n)|_{T_w^v}$ is monomial-free if and only if $(v,w) \in \MT_n$. In particular, if $(v,w) \in \MT_n$ then $\init(I_n)|_{T_w^v}$ is a toric ideal and coincides with the kernel of $\phi_n|_w^v$.
\end{theorem*}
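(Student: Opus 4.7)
The strategy is to examine the generators of $\init(I_n)|_{T_w^v}$ obtained via Lemma~\ref{lem:elim_ideal_gen_set}. By Theorem~\ref{thm:Pure}, $\init(I_n) = \ker(\phi_n)$ is toric and generated by quadratic binomials $P_{J_1}P_{J_2} - P_{J_1'}P_{J_2'}$ where $\phi_n(P_{J_1}P_{J_2}) = \phi_n(P_{J_1'}P_{J_2'})$. Setting $P_J = 0$ for $J \notin T_w^v$ turns such a binomial into a monomial precisely when one of the pairs $\{J_1,J_2\}$, $\{J_1',J_2'\}$ is contained in $T_w^v$ and the other is not. Thus $\init(I_n)|_{T_w^v}$ is monomial-free if and only if the collection $T_w^v$ is \emph{saturated} under the fibres of $\phi_n$ restricted to pairs: whenever a degree-two $\phi_n$-fibre meets $T_w^v \times T_w^v$, it is entirely contained in $T_w^v \times T_w^v$.

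For the ``only if'' direction, given $(v,w) \notin \MT_n$, I would exhibit explicit sets $J_1, J_2 \in T_w^v$ whose sorted partners $J_1', J_2'$ violate one of the bounds $v \le J_i' \le w$. The construction comes from unpacking the negation of the defining condition of $\MT_n$ in~\eqref{eq:def_TN}: a forbidden position in $(v,w)$ lets me choose rows in which the sorting operation pushes one index above $w$ or below $v$. The corresponding binomial generator of $\init(I_n)$ then restricts to the nonzero monomial $P_{J_1}P_{J_2}$, giving the desired obstruction. I expect this reverse-engineering step, matching each failure mode of the $\MT_n$ condition to a concrete unsortable pair, to be the most technical part of the argument.

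For the ``if'' direction, assume $(v,w) \in \MT_n$. Here I would establish the saturation property by a direct combinatorial argument: for any $J_1, J_2 \in T_w^v$, I follow the elementary swaps that produce the sorted pair $(J_1', J_2')$ and check that each intermediate pair still satisfies $v \le J_i \le w$, using the $\MT_n$ constraints to rule out any forbidden entry of $v$ or $w$ blocking a swap. This immediately yields monomial-freeness of $\init(I_n)|_{T_w^v}$. For the ``in particular'' clause, the inclusion $\init(I_n)|_{T_w^v} \subseteq \ker(\phi_n|_w^v)$ is automatic from $\init(I_n) \subseteq \ker(\phi_n)$. For the reverse inclusion, every quadratic generator of the toric ideal $\ker(\phi_n|_w^v)$ is a sorting binomial in variables indexed by $T_w^v$ which, by saturation, is already a generator of $\init(I_n)$ supported in $T_w^v$, hence lies in $\init(I_n)|_{T_w^v}$. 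Equality follows, and primeness of $\ker(\phi_n|_w^v)$ identifies $\init(I_n)|_{T_w^v}$ as a toric ideal.
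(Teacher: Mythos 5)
Your overall framing — characterize monomial-freeness via whether restricting the quadratic binomial generators of $\init(I_n)$ to $T_w^v$ can produce a monomial, i.e.~whether $T_w^v$ is ``saturated'' under row-wise sorting of two-column tableaux — is the right one, and it matches the skeleton of the paper's argument. Your handling of the ``in particular'' clause is also essentially correct: once monomial-freeness is established, $\init(I_n)|_{T_w^v}$ is generated by binomials lying in $\ker(\phi_n|_w^v)$, and the reverse inclusion $\ker(\phi_n|_w^v) = \ker(\phi_n)\cap \KK[P_J: J \in T_w^v] \subseteq \init(I_n)|_{T_w^v}$ is immediate from the definition of the restricted ideal; this is precisely Lemma~\ref{lem:J_1=J_3_binomial_flag}(i).

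However, the ``if'' direction as you have sketched it has a genuine gap. You propose to ``follow the elementary swaps'' from $(J_1,J_2)$ to the row-wise sorted pair $(J_1', J_2')$ and verify that each intermediate pair satisfies $v \le J_i \le w$. The difficulty is that single-row swaps do not in general produce valid tableaux: swapping only row $2$ of the pair $(\{3,4\},\{1,2\})$ gives $(\{3,2\},\{1,4\})$, whose first ``column'' is not increasing, so ``intermediate pair $\in T_w^v$'' is not even well posed. The full row-wise sort (swapping all violating rows at once) does yield a single valid semi-standard pair, but then there are no intermediates and you are left with exactly the hard claim: that $(J_1',J_2') \in T_w^v \times T_w^v$ whenever $(v,w)\in\MT_n$ and $J_1,J_2 \in T_w^v$. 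The paper proves this (Lemma~\ref{lem:compatible_is_monomial_free}) by a double induction on $n$ and on $\dim X_w^v$, crucially using the \emph{block structure} of compatible pairs (Propositions~\ref{prop:unified} and~\ref{prop:block_max_order}) and the auxiliary permutations $v', w'$ from Lemma~\ref{lem:inductive_vw}; none of this machinery appears in your sketch, and it is not clear how to do without it. Similarly, for the ``only if'' direction you plan to ``unpack the negation'' of the $\MT_n$ condition directly; the paper instead splits this into two inductive steps — Lemma~\ref{non toric} (failure of monomial-freeness lifts from $(\underline v, \underline w)$) and Lemma~\ref{comp} (non-compatibility forces a monomial, with several explicit case constructions) — and you would in effect need to reproduce the same case analysis. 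So the overall plan is reasonable and close to the paper's, but the central combinatorial content of both directions is asserted rather than supplied, and the ``elementary swaps'' idea as stated does not work.
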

To prove this theorem, we exploit the explicit description of a generating set for $\init(I_n)|_{T_w^v}$ in Section~\ref{subsec:X} and the inductive structural results for elements of $\MT_n$ in Section~\ref{sec:Tn_block_structure}.
In order to determine whether we obtain a toric degeneration of the Richardson variety, we check whether the left hand square in \eqref{diagram:intro_commutative} commutes, i.e.~we check whether $\init(I(X_w^v)) = \init(I_n)|_{T_w^v}$. In Theorem~\ref{thm:toric_degen} we show that the square does indeed commute whenever the standard monomials for the flag variety restrict to a monomial basis for the coordinate ring for the Richardson variety, i.e.~whenever $(v,w) \in \MT_n$ and our main theorem holds. In these cases we obtain toric degenerations of Richardson varieties.

\subsection{Outline of the Paper.} 

In Section~\ref{sec:prim}, 
we fix our notation throughout the paper and we recall the notion of Richardson varieties and Gr\"obner degenerations. 
In Section~\ref{sec:SMT}, we study standard monomial bases of Richardson varieties. We first construct the set $\MT_n$ and introduce the notion of a block structure for its elements. This is our main tool to prove Theorem~\ref{thm:ssyt_fl_diag}.
In Section~\ref{sec:mon_free_ideals}, we classify monomial-free ideals of from 
$\init(I_n)|_{T_w^v}$, see Theorem~\ref{thm:toric_fl_rich_diag}. Section~\ref{sec:toric} contains the proofs of our results on toric degenerations of Richardson varieties, in particular Theorem~\ref{thm:toric_degen}.
In Remark~\ref{sec:compareKim's} we perform calculations for all Richardson varieties in
$\Flag_4$. We 
show how our methods can be used to study initial ideals with respect to different weight vectors. In particular, we outline how our method can refine the results from \cite{kim2015richardson}.

\section{Preliminaries}\label{sec:prim}
\vspace{-2mm}
Throughout we fix an algebraically closed field $\mathbb{K}$ and write $[n]$ for the set $\{1, \dots, n \}$. We denote the symmetric group on $n$ symbols by $S_n$ and for any $w\in S_n$ we write $w = (w_1, \dots, w_n)$, where $w_i = w(i)$ for each $i\in[n]$. 
We fix 
$w_0:=(n, n-1, \ldots, 2, 1)$ for the longest product of adjacent transpositions in $S_n$. The permutations of $S_n$ act naturally on the left of subsets of $[n]$. So, for each $I = \{i_1, \ldots , i_k \} \subset [n]$, we have $w_0 I = \{n+1-i_1, \ldots , n+1-i_k \}$ which is obtained by applying the permutation $w_0$ element-wise to $I$. 
We use $\le$ for the natural partial order on the subsets of $[n]$ given by
\[
\{i_1 < \dots < i_s \} \le \{j_1 < \dots < j_t \} 
\textrm{ if } 
s \ge t \textrm{ and }
i_1 \le j_1, \dots, i_t \le j_t.
\]We recall the \textit{Bruhat order} on $S_n$, which is given by
\[
(v_1, \dots, v_n) \le (w_1, \dots, w_n) \textrm{ if } \{v_1, \dots, v_k \} \le \{w_1, \dots, w_k \} \textrm{ for all } k \in [n].
\]
It is also convenient  for us to define a comparison operator between subsets and permutations:
\[
\{i_1, \dots, i_k \} \le (w_1, \dots, w_n)
\textrm{ if } \{i_1, \dots, i_k \} \le \{ w_1, \dots, w_k \},
\]
\[
(v_1, \dots, v_n) \le \{i_1, \dots, i_k \}
\textrm{ if } \{ v_1, \dots, v_k \} \le \{i_1, \dots, i_k \}.
\]

\begin{remark}
The comparison $\le$ between subsets and permutations can be phrased purely in terms of the Bruhat order as follows. For each subset $I = \{i_1 < \dots < i_s \} \subseteq [n]$, let $\{j_1 < \dots < j_{n-s} \} = [n] \backslash I$ denote its complement.
Then for any pair of permutations $v, w \in S_n$ we have:
\[
I \le w \iff (i_1, i_2 \dots, i_s, j_1, \dots, j_{n-s}) \le w
\quad \textrm{ and } \quad
v \le I \iff v \le (i_s, i_{s-1} \dots, i_1, j_{n-s}, \dots, j_1).
\]
\end{remark}

\vspace{-2mm}\subsection{Flag varieties.}\label{sec:flag_def}\label{sec:prelim_ideals}
A full flag is a sequence of vector subspaces of $\mathbb{K}^n$: $$\{0\}= V_0\subset V_1\subset\cdots\subset V_{n-1}\subset V_n=\mathbb{K}^n$$ where ${\rm dim}_{\mathbb{K}}(V_i) = i$. 
The set of all full flags is called the flag variety and denoted by $\Flag_n$, which is naturally embedded  in a product of Grassmannians. Here, we consider the structure of algebraic variety on $\Flag_n$ induced from the product of Grassmannians.
We view the full flag variety $\Flag_n$ as a homogeneous space for the group SL$(n,\mathbb K)$ of complex $n\times n$ matrices with determinant one. Precisely, there is a natural transitive action of SL$(n, \mathbb K)$ on the flag variety $\Flag_n$ which identifies the variety $\Flag_n$ with the set of left cosets SL$(n, \mathbb K)/B$, where $B$ is the stabiliser of the standard flag
$0\subset \langle e_1\rangle \subset\cdots \subset \langle e_1, \ldots, e_n\rangle=\mathbb K^n $. Here, note that
$B$ is the subgroup of SL$(n,\mathbb K)$ consisting of upper triangular matrices. Given a permutation $w \in S_n$, we denote by $\sigma_w$ the $n\times n$ permutation matrix with $1$'s in positions $(w(i),i)$ for all $i$. By the Bruhat decomposition, we can write the aforementioned set of cosets as 
$$\Flag_n={\rm SL}(n,\mathbb K)/B= \coprod_{w\in S_n}B\sigma_wB/B.$$
The spaces $B\sigma_wB/B$ are all affine and are called Bruhat cells. 
Similarly, for the subgroup of lower triangular matrices $B^-$, the homogeneous space $\Flag_n$ can be decomposed as 
$$\Flag_n=\coprod_{v\in S_n}B^-\sigma_vB/B.$$

\subsection{Richardson varieties.}\label{sec:Richardson}
Let $v, w\in S_n$.
We define the Richardson variety $X_w^v$
associated to $v,w$ as the intersection of Schubert variety $X_w$ and opposite Schubert variety $X^v$ inside the flag variety $\Flag_n$. More precisely, the Schubert and opposite Schubert varieties are defined as the Zariski closure of the corresponding cells in the aforementioned decomposition, namely:
$$X_w=\overline{B\sigma_wB/B} \subseteq \Flag_n\quad\text{and}\quad X^v=\overline{B^-\sigma_vB/B}\subseteq \Flag_n.$$ 
Note that $X_w^v$ is nonempty if and only if $v \leq w$ with respect to the \textit{Bruhat order}, see Section~\ref{sec:prelim_tableaux}. Moreover, the dimension of $X_w^v$ is given by $\dim(X_w^v) = N(w) - N(v)$, where $N(w)$ is the \textit{inversion number} of $w$, i.e.~the total number of pairs $(i,j) \in [n]\times [n]$ such that $i < j$ and $w(i) > w(j)$.
which we denote by $N(w)$.

We also note that the opposite Schubert variety $X^v$ can be observed as a translate $w_0X_{w_0v}$ of the Schubert variety $X_{w_0v}$ since $B^-=\sigma_{w_0}B\sigma_{w_0}$. Moreover, $X_w^{id}=X_w$ and $X_{w_0}^v=X^v$.

\vspace{-2mm}
\subsection{Ideals of flag varieties and Richardson varieties.}\label{sec:ideal}
Every point in the flag variety $\Flag_n$ can be represented by an $(n-1)\times n$ matrix $X=(x_{i,j})$ of full rank.  
Let
$\KK[x_{i,j}]$ be the polynomial ring on the variables $x_{i,j}$. 
The ideal of the flag variety $\Flag_n$, denoted by $I_n$, is the kernel of the polynomial map
\vspace{-1mm}
\begin{eqnarray}\label{eq:map}
\varphi_n:\ \mathbb{K}[P_J: \varnothing\neq J\subsetneq [n]]\rightarrow\mathbb{K}[x_{i,j}]
\end{eqnarray}
sending each variable
$P_J$ to the determinant of the submatrix of $X$ with row indices $1,\ldots,|J|$ and column indices in $J$. We call the variables $P_J$ of the ring  Pl\"ucker variables and their images $\varphi_n(P_J)$ Pl\"ucker forms. We also call $I_n$ the Pl\"ucker ideal of the flag variety $\Flag_n$.

Given $v\leq w$ in $S_n$, we define the collection of subsets $T_w^v=\{J \subset[n] : v \le J \le w \}$ and its complement $S_w^v = \{J \subseteq [n] \} \backslash T_w^v$. The comparison of subsets and elements of $S_n$, along with the Bruhat order on $S_n$ is given in Section~\ref{sec:prelim_tableaux}. Then the associated ideal of the Richardson variety $X_w^v$ is
\begin{equation}\label{eq:restric}
I(X_w^v) = (I_n + \langle P_J: J\in S_w^v\rangle) \cap \mathbb K[P_J : J \in T_w^v]= (I(X_w)+I(X^v)) \cap \mathbb K[P_J : J \in T_w^v].
\end{equation}

\medskip

We now give an example of the subsets $S_w^v$ and $T_w^v$ 
(see \cite[\S 3.4]{kreiman2002richardson} for more details).

\begin{example}\label{def:sets}{\rm 
Let $n = 4$. Consider the permutations $v = (2314)$ and $w = (4231)$. The subsets of $[n]$ in $T^v_w$ of size one are given by those entries that lie between $v_1 = 2$ and $w_1 = 4$, which are $2, 3$ and $4$. The subsets of size two are those that lie between $\{v_1, v_2 \} = 23$ and $ \{w_1, w_2 \} = 24$ which are $23$ and $24$. The subsets of size three are those which lie between $\{v_1, v_2, v_3 \} = 123$ and $\{ w_1, w_2, w_3\} = 234$ which are all possible three-subsets. So we have: 
\[
{T}^{(2314)}_{(4231)} = \{2,3,4, 23,24,123,124,134,234 \} \text{ and } S^{(2314)}_{(4231)} = \{1, 12, 13, 14, 34\}.
\]
}\end{example}

\subsection{Gr\"obner degenerations of \texorpdfstring{$I_n$}{In}.}\label{subsec:flag}\hspace{0.3mm}
We first fix our notation throughout this section.
We fix the $(n-1) \times n$ matrix $M$ with entries:
\begin{eqnarray}\label{eq:matrix}
M_{i,j}=
      (i-1)(n-j+1)
\end{eqnarray}
For instance, when $n = 5$ we have the following matrix $4 \times 5$ matrix:
\[
M = 
\begin{bmatrix}
0  &  0  &  0  &  0  &  0  \\
5  &  4  &  3  &  2  &  1  \\
10 &  8  &  6  &  4  &  2  \\
15 &  12 &  9  &  6  &  3  
\end{bmatrix}
\]
Let $X=(x_{i,j})$ be an $ (n-1)\times n$ matrix of indeterminates. For each $k$-subset $J$ of $[n]$, the initial term of the Pl\"ucker form $\varphi_n(P_J) \in \mathbb{K}[x_{ij}]$ denoted by $\ini_{M}(P_J)$ is the sum of all terms in $\varphi_n(P_J)$ of the lowest weight, where the weight of a monomial $\bf m$ is the sum of entries in $M$ corresponding to the variables in $\bf m$. 
By \cite[Proposition~2.7]{Chary_Ollie_Fatemeh}, the initial term $\init_{M}(P_J)$ is the \textit{leading diagonal term} of the minor $\varphi_n(P_J)$ for each subset $J \subseteq [n]$. Explicitly, if $J = \{j_1 < \dots < j_k \}$ then we have
$
\init_{M}(P_J) = x_{1,j_1} x_{2,j_2} \dots x_{k,j_k}.
$
The weight of each variable $P_J$ is defined as the weight of each term of $\textrm{in}_{M}(P_J)$ with respect to $M$, and it is called {\em the weight induced by $M$}. We write ${\bf w}_M$ for the weight vector induced by $M$ on the Pl\"ucker variables.

Throughout this note, we will write $\init(I_n)$ for the initial ideal of $I_n$ with respect to ${\bf w}_M$. In the following theorem, we summarise some of the important properties of $\init(I_n)$ from \cite{OllieFatemeh2}. See also Theorem~14.16 in \cite{miller2004combinatorial} in which $\init(I_n)$ is realised as a Hibi ideal \cite{hibi1987distributive} associated to the poset whose underlying set consists of Pl\"ucker variables.

\begin{theorem}[Theorem 3.3 and Corollary 4.13 in \cite{OllieFatemeh2}]
\label{thm:Pure}    

The ideal $\init(I_n)$ is generated by quadratic binomials. Moreover, it is toric and it is equal to the kernel of the monomial map:
\begin{eqnarray}\label{eqn:monomialmapflags}
\phi_n \colon\  \mathbb{K}[P_J: \varnothing\neq J\subsetneq [n]]  \rightarrow \mathbb{K}[x_{ij}]  
\quad\text{with}\quad
 P_{J}   \mapsto {\rm in}_{M}(P_J).
\end{eqnarray}
\end{theorem}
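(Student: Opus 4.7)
The plan is to realize $\init(I_n)$ as the Hibi ideal of the distributive lattice $(\mathcal L, \le)$ of nonempty proper subsets of $[n]$, ordered as in Section~\ref{sec:prim}, and then check that this Hibi ideal coincides with $\ker(\phi_n)$. Let $H \subset R := \mathbb K[P_J : \varnothing \ne J \subsetneq [n]]$ denote the ideal generated by the binomials $P_J P_K - P_{J \wedge K} P_{J \vee K}$ for all incomparable $J, K \in \mathcal L$, where $\wedge, \vee$ are the lattice operations. By Hibi's theorem $H$ is a prime toric ideal. Its generators automatically lie in $\ker(\phi_n)$, since a short combinatorial check shows that $\phi_n(P_J)\phi_n(P_K)$ and $\phi_n(P_{J\wedge K})\phi_n(P_{J\vee K})$ collect the same multiset of variables $x_{i,j}$ after sorting. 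The strategy is to establish $H \subseteq \init(I_n) \subseteq \ker(\phi_n)$ and then upgrade these containments to equalities via a Hilbert-series comparison.

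The containment $\init(I_n) \subseteq \ker(\phi_n)$ is almost formal. For any $f \in I_n$ we have $\varphi_n(f) = 0$. Because $\phi_n(P_J) = \init_M(\varphi_n(P_J))$ is a single monomial whose weight in $\mathbb K[x_{i,j}]$ equals ${\bf w}_M(P_J)$, the weight-minimal part of the $x$-expansion of $\varphi_n(f)$ coincides with $\phi_n(\init_{{\bf w}_M}(f))$; vanishing of $\varphi_n(f)$ forces $\phi_n(\init f) = 0$.

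For $H \subseteq \init(I_n)$, fix an incomparable pair $J, K$ and consider the classical quadratic Pl\"ucker relation corresponding to a single-element exchange pushing $(J,K)$ toward $(J\wedge K,\, J\vee K)$. Expanding in Pl\"ucker variables produces the terms $P_J P_K$, $P_{J\wedge K} P_{J\vee K}$, and several further swapped pairs $P_{J'} P_{K'}$. The combinatorial heart of the argument is a direct verification, using $M_{i,j}=(i-1)(n-j+1)$, that $P_J P_K$ and $P_{J\wedge K} P_{J\vee K}$ share the same ${\bf w}_M$-weight, while every other swapped pair is strictly heavier; this relies on strict convexity of $(i-1)(n-j+1)$ in the index pair $(i,j)$, which makes the meet/join operation the unique weight-preserving redistribution of column indices. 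The initial form of this Pl\"ucker relation is therefore $\pm(P_J P_K - P_{J\wedge K} P_{J\vee K})$, giving $H \subseteq \init(I_n)$.

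To upgrade the chain of containments to equalities, compare Hilbert series. Flatness of Gr\"obner degeneration gives $\operatorname{Hilb}(R/\init(I_n)) = \operatorname{Hilb}(R/I_n)$, which by Hodge's standard monomial basis is counted by multichains in $\mathcal L$. Hibi's theorem identifies $\operatorname{Hilb}(R/H)$ with the same multichain count. Squeezing $H \subseteq \init(I_n) \subseteq \ker(\phi_n)$ then yields $H = \init(I_n) = \ker(\phi_n)$, and both generation by quadratic binomials and toricity are built into Hibi's description. The main obstacle is the weight comparison for Pl\"ucker relations: one must verify not only that the meet/join pair matches the original pair in weight, but also that every other Pl\"ucker swap is strictly heavier, and this is where the specific arithmetic of $M_{i,j} = (i-1)(n-j+1)$ is used in an essential way.
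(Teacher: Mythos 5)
First, a structural point: the paper does not supply its own proof of this statement; it is quoted verbatim as Theorem~3.3 and Corollary~4.13 of \cite{OllieFatemeh2}, with a pointer to Theorem~14.16 of \cite{miller2004combinatorial} (where $\init(I_n)$ is identified with the Hibi ideal of the lattice of Pl\"ucker variables). Your proposal follows exactly the Hibi-ideal route suggested by those references, so at the level of strategy you are aligned with the sources the paper invokes. That said, judged as a proof sketch it has two real gaps.

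The first is the step $H\subseteq\init(I_n)$. You assert that a ``single-element exchange'' Pl\"ucker relation has initial form $\pm(P_JP_K-P_{J\wedge K}P_{J\vee K})$, and you attribute this to ``strict convexity of $(i-1)(n-j+1)$.'' The function $M_{i,j}=(i-1)(n-j+1)$ is bilinear, not convex; the relevant property is the rectangle (supermodularity) inequality $M_{i,j}+M_{i',j'}<M_{i,j'}+M_{i',j}$ for $i<i'$, $j<j'$, which forces the sorted pairing of column indices to minimise weight. Even granting that, it is not automatic that the quadric you produce by a single-element exchange contains $P_{J\wedge K}P_{J\vee K}$ as a term when $J$ and $K$ differ in several positions, nor that all the remaining terms are strictly heavier; this is precisely the content that needs a careful verification (and it is what the straightening-law arguments in the cited references supply). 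Leaving it as ``the main obstacle'' means the proposal does not yet establish the containment.

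The second gap is in the final squeeze. From $H\subseteq\init(I_n)\subseteq\ker(\phi_n)$ together with $\operatorname{Hilb}(R/H)=\operatorname{Hilb}(R/I_n)=\operatorname{Hilb}(R/\init(I_n))$ you correctly deduce $H=\init(I_n)$, but this does \emph{not} by itself give equality with $\ker(\phi_n)$: you have only an inclusion $H\subseteq\ker(\phi_n)$ and no control on $\operatorname{Hilb}(R/\ker(\phi_n))$. You need one more ingredient, e.g.\ observe that $H$ and $\ker(\phi_n)$ are both prime (Hibi, respectively toric) and show they have the same Krull dimension, or show directly that the multichain standard monomials have pairwise distinct $\phi_n$-images (hence are linearly independent modulo $\ker(\phi_n)$), so that $\operatorname{Hilb}(R/\ker(\phi_n))=\operatorname{Hilb}(R/H)$ and the inclusion is an equality. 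Either route is short, but one of them must be supplied for the argument to close.
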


\vspace{-2mm}
\subsection{Gr\"obner degenerations of \texorpdfstring{$I(X_w^v)$}{I(Xwv)}.}\label{subsec:X}\hspace{0.3mm}
For the Richardson variety $X_w^v$ we project the weight vector ${\bf w}_M$ induced by the matrix $M$ in \eqref{eq:matrix} to the coordinates corresponding to the variables in the polynomial ring 
$\KK[P_J: {J\in T_w^v}]$
and study its corresponding initial ideal $\init_{\bf w}(I(X_w^v))$ and its relation to the kernel of the monomial following map obtained by restricting the map $\phi_n$ from \eqref{eqn:monomialmapflags} to the polynomial ring $\KK[P_J: {J\in T_w^v}]$ as follows:
\begin{eqnarray}\label{eqn:restricted_map}
\phi_n|_w^v \colon\  \mathbb{K}[P_J: J\in T_w^v]  \rightarrow \mathbb{K}[x_{ij}]  
\quad\text{with}\quad
 P_{J}   \mapsto {\rm in}_{M}(P_J).
\end{eqnarray}
To simplify our notation we will omit the weight vector ${\bf w}_M$ from the initial ideals and write $\init(I(X_w^v))$. We also introduce the following notation to simplify the description of our ideals.

\begin{Notation}
\label{notation:restricted}
Let $G \subset \mathbb{K}[P_J: \varnothing\neq J\subsetneq [n]]$ a collection of polynomials and $T$ be a collection of subsets of $[n]$. We identify $T$ with the characteristic vector of $T^{\rm c}$, i.e.~$T_J = 1$ if $J \not\in T$ otherwise $T_J = 0$. For each $g \in G$ we write $g = \sum_{\alpha} c_\alpha {\bf P}^\alpha$ and define
\[
\hat g = \sum_{T \cdot\alpha = 0} c_\alpha {\bf P}^\alpha\quad\text{and}\quad 
G|_T= \{\hat g : g \in G \} \subseteq \mathbb{K}[P_J: J\in T].
\]
We call $\langle G|_T\rangle $ the {\it restriction} of the ideal $\langle G\rangle$ to $T$.
It is useful to think of $G|_T$ as the set obtained from $G$ by setting the variables $\{P_J: J \not\in T\}$ to zero. We say that the variable $P_J$ \textit{vanishes} in the ideal $\langle G|_T \rangle$ if $J \not\in T$. Similarly, we say that a polynomial $g \in \mathbb{K}[P_J:\ J\in T]$ vanishes in the restricted ideal $\langle G|_T \rangle$ if $g \in \langle P_J: J \not\in T \rangle$. The ideal $\langle G|_T \rangle$ can be computed in $\mathtt{Macaulay2}$~\cite{M2} as an elimination ideal using the following command
$$\mathtt{eliminate}(\langle G\rangle + \langle P_J: J \not\in T \rangle, \{P_J : J \in T\}).$$
\end{Notation}
\begin{lemma}
\label{lem:elim_ideal_gen_set}
With the notation above we have:
 \begin{itemize}
     \item[{\rm (i)}] $\langle G|_T \rangle = \langle G \cup \{P_J:\ J\not\in T\} \rangle \cap \mathbb{K}[P_J:\ J\in T]$.
\item[{\rm (ii)}] Let $v,w\in S_n$. Then the ideal $ \init(I_n)|_{T_w^v}$ is generated by quadratic binomials.
 \end{itemize}
\end{lemma}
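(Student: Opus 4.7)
My plan is to prove both parts by reinterpreting the construction $g \mapsto \hat g$ as a ring homomorphism. Specifically, I introduce the $\KK$-algebra map $\pi \colon \KK[P_J : \varnothing \neq J \subsetneq [n]] \to \KK[P_J : J \in T]$ defined by $\pi(P_J) = P_J$ for $J \in T$ and $\pi(P_J) = 0$ for $J \notin T$. Comparing this with the definition of $\hat g$ monomial-by-monomial, one sees immediately that $\pi(g) = \hat g$ for every polynomial $g$, so $G|_T = \pi(G)$ and $\langle G|_T\rangle$ is the ideal generated by $\pi(G)$ inside $\KK[P_J : J \in T]$.

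For part (i), the inclusion $\langle G|_T \rangle \subseteq \langle G \cup \{P_J : J \notin T\}\rangle \cap \KK[P_J : J \in T]$ is immediate: each $\hat g$ lies in $\KK[P_J : J \in T]$ by construction, while $g - \hat g \in \langle P_J : J \notin T\rangle$ shows $\hat g \in \langle G \cup \{P_J : J \notin T\}\rangle$. For the reverse inclusion, I take any $f$ in the right-hand intersection, write $f = \sum_i r_i g_i + \sum_{J \notin T} s_J P_J$ with $g_i \in G$, and apply $\pi$ to both sides. Since $f \in \KK[P_J : J \in T]$ the map $\pi$ fixes $f$, while it annihilates the linear correction terms and is multiplicative; the result is $f = \sum_i \pi(r_i)\, \hat g_i \in \langle G|_T\rangle$. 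Part (ii) then follows immediately from part (i) together with Theorem~\ref{thm:Pure}: the theorem supplies a generating set $G$ for $\init(I_n)$ consisting of quadratic binomials of the form $P_A P_B - P_C P_D$, and under $\pi$ each such $g$ maps to either itself (if $A,B,C,D \in T_w^v$), a quadratic monomial $\pm P_A P_B$ or $\pm P_C P_D$ (if exactly one of the two terms has both subscripts in $T_w^v$), or zero, proving that $\init(I_n)|_{T_w^v}$ is generated by degree-two binomials.

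The proof poses no real obstacle once the hat operation is recognised as the evaluation homomorphism $\pi$; everything else is routine bookkeeping. The one subtle point worth flagging is that, a priori, the set $G|_T$ and hence the ideal $\langle G|_T\rangle$ depend on the chosen generating set $G$ rather than on the ideal $\langle G\rangle$ alone. Part (i) is exactly what confirms that $\langle G|_T\rangle$ depends only on the ideal $\langle G\rangle$, since its right-hand side manifestly does, and this well-definedness is tacitly invoked in part (ii) when passing from the quadratic binomial generating set of $\init(I_n)$ to the restricted ideal $\init(I_n)|_{T_w^v}$.
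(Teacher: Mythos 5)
Your proof is correct. The key structural observation — that $g \mapsto \hat g$ is nothing but the evaluation ring homomorphism $\pi$ that sends $P_J$ to $0$ for $J \notin T$ and is the identity on the remaining variables — is exactly the right way to see part (i): both inclusions fall out, the nontrivial direction by applying $\pi$ to a representation $f = \sum r_i g_i + \sum_{J\notin T} s_J P_J$ and using $\pi(f)=f$. Your derivation of (ii) from (i) and Theorem~\ref{thm:Pure} is the same as the paper's, and your remark about well-definedness of $\langle G|_T\rangle$ (a priori it depends on the generating set $G$, but (i) shows it only depends on $\langle G\rangle$) is a genuine point that the paper leaves implicit.

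The one difference in route is that the paper does not prove part (i) at all: it cites Lemma~6.3 of an earlier paper by the second and third authors. Your argument supplies a short, self-contained proof in its place, which is arguably preferable for a reader and costs nothing. One small caution on the wording for (ii): a binomial $P_AP_B - P_CP_D$ can map under $\pi$ to a single monomial $P_AP_B$ (if one term survives and the other does not), so strictly speaking the generating set you produce consists of quadratic binomials \emph{and} quadratic monomials, not binomials alone. The lemma statement elides this, but the paper's introduction explicitly says the restriction is ``generated by degree two monomials and binomials,'' and the presence of monomials is precisely what Theorem~\ref{thm:toric_fl_rich_diag} is about, so it is worth phrasing your conclusion to match.
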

\begin{proof}
Part (i) is \cite[Lemma~6.3]{OllieFatemeh3}. To prove (ii) we first note that by Theorem~\ref{thm:Pure}, there exists a set of quadratic binomials $G$ generating the ideal $\init(I_n)$. Hence, the assertion follows immediately from (i). In particular, we have that
\begin{equation}\label{eq:G|T}
    \init(I_n)|_{T_w^v}=\langle G|_{T_w^v}\rangle = \langle G \cup \{P_J : J \in S_w^v \} \rangle \cap \mathbb K[P_J : J \in T_w^v]
\end{equation}
which completes the proof of (ii).
\end{proof}

\subsection{Permutations, tableaux and their combinatorial properties.}\label{sec:prelim_tableaux}

In this section we introduce and prove some basic facts about \textit{semi-standard Young tableaux} and their \textit{defining chains}. We use these to study standard monomial bases for Richardson varieties in Section~\ref{sec:SMT}. 
We begin by recalling, from the beginning of  Section~\ref{sec:prim}, that $\le$ denotes the Bruhat order on $S_n$ and a comparison operator between subsets of $[n]$ and permutations.

A \textit{semi-standard Young tableau} $T$ is a sequence of subsets $I_1, \dots, I_d$ of $[n]$ such that $I_1 \le \dots \le I_d$. Each subset $I_j$ is called a column of $T$ and we will write this $T = [I_1 \dots I_d]$. For each pair of permutations $v, w$, we define $SSYT_d(v,w)$ to be the collection of all semi-standard Young tableau $T = [I_1 \dots I_d]$ such that $v \le I_k \le w$ for all $k \in [d]$ 

\begin{example}
It is often convenient to draw $T$ in a diagram, for example if $T = [I_1 I_2 I_3] = [125, 246, 35]$ then the corresponding diagram has columns $I_1, I_2, I_3$ and is drawn:
\[
T = 
\begin{tabular}{ccc}
    \hline
    \multicolumn{1}{|c|}{$1$} &
    \multicolumn{1}{ c|}{$2$} &
    \multicolumn{1}{ c|}{$3$} \\
    \hline 
    \multicolumn{1}{|c|}{$2$} &
    \multicolumn{1}{ c|}{$4$} &
    \multicolumn{1}{ c|}{$5$} \\
    \hline 
    \multicolumn{1}{|c|}{$5$} &
    \multicolumn{1}{ c|}{$6$} & \\
    \cline{1-2}
\end{tabular}
\]
Note that such diagrams are defined by: columns with weakly decreasing length, weakly increasing entries in each row and strictly increasing entries in each column.
\end{example}

\begin{definition}\label{def:defining_chain}
Let $T$ be a semi-standard Young tableau with columns $I_1, \dots, I_d$. Let $\textbf{u} = (\textbf{u}_1, \dots, \textbf{u}_d)$ be a sequence of permutations and write $\textbf{u}_k = (u_{k,1}, \dots, u_{k,n}) \in S_n$ for each $k \in [d]$. We say that $\textbf{u}$ is a \emph{defining chain} for $T$ if the permutations are monotonically increasing $\textbf{u}_1 \le \textbf{u}_2 \le \dots \le \textbf{u}_d$ with respect to the Bruhat order and for each $k \in [d]$ we have $I_k = \{u_{k,1}, \dots, u_{k, |I_k|} \}$.
\end{definition}

There is a natural partial order on the set of defining chains for a given semi-standard Young tableau $T$.
Let $\bm{\pi} = (\bm{\pi}_1, \dots, \bm{\pi}_t)$ and $\bm{\sigma} = (\bm{\sigma}_1, \dots, \bm{\sigma}_t)$ be defining chains for $T$. We say $\bm{\pi} \le \bm{\sigma}$ if $\bm{\pi}_k \le \bm{\sigma}_k$ for all $k \in [t]$. It turns out that there exists a unique minimum $w_-(T) = (w_1^-, \dots, w_d^-)$ and a unique maximum $w_+(T) = (w_1^+, \dots, w_d^+)$ defining chains for $T$. When the tableau is not clear from the context, we write $w_i^+(T)$ for $w_i^+$ and $w_i^-(T)$ for $w_i^-$. 

\medskip

The following notation is particularly useful for describing the permutations $w_1^+$ and $w_2^-$.

\begin{Notation}\label{notation:perms_from_partitions}
{\rm Let $\mathcal P = (P_1, P_2, \dots, P_k )$ be a $k$-partition of $[n]$ where $P_i$ are non-empty and disjoint subsets of $[n]$. Write $P_i = \{p_{i,1} < p_{i,2} < \dots < p_{i, |P_i|}\}$ for each $i$, and define the permutations:
\[
(P_1^\uparrow, P_2^\uparrow, \dots, P_k^\uparrow) = (p_{1,1}, p_{1,2}, \dots, p_{1, |P_1|}, p_{2,1}, \dots, p_{2, |P_2|}, p_{3,1}, \dots, p_{k, |P_k|}),
\]
\[
(P_1^\downarrow, P_2^\downarrow, \dots, P_k^\downarrow) = (p_{1,|P_1|}, p_{1,|P_1| - 1}, \dots, p_{1,1}, p_{2,|P_2|}, \dots, p_{2, 1}, p_{3,|P_3|}, \dots, p_{k, 1}).
\]
Note that the set $P_k$ is determined uniquely by $P_1, \dots, P_{k-1}$. So we write $(P_1^\uparrow, \dots, P_{k-1}^\uparrow, \uparrow)$ and $(P_1^\downarrow, \dots, P_{k-1}^\downarrow, \downarrow)$ for the above permutations respectively.
If any of the parts $P_i = \{p_{i,1}\}$ are singleton sets then we omit the arrow on that part from the notation. 
}
\end{Notation} 

We proceed by proving some basic properties of these permutations from partitions and their relationship to minimum and and maximum defining chains.

\begin{proposition}\label{prop:3_part_perm_up}\label{prop:3_part_perm_down}
Suppose $P_1, P_2, P_3$ is a $3$-partition of $[n]$. Let $v, w$ be permutations such that $P_1 \le w$ and $v \le P_1 \cup P_2$.
\begin{itemize}
    \item If $(P_1^\uparrow, P_2^\uparrow, P_3^\uparrow) \nleq w$ then $P_1 \cup P_2 \nleq w$.
    \item If $v \nleq (P_1^\downarrow, P_2^\downarrow, P_3^\downarrow)$ then $v \nleq P_1$.
\end{itemize}
\end{proposition}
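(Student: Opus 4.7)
The plan is to prove each bullet via its contrapositive and to reduce them to a single statement using Bruhat-order duality. For the first bullet, the contrapositive is: given $P_1 \le w$, if in addition $P_1 \cup P_2 \le w$, then $\pi := (P_1^\uparrow, P_2^\uparrow, P_3^\uparrow) \le w$ in the Bruhat order. For the second, note that left-multiplication by $w_0$ carries $(P_1^\downarrow, P_2^\downarrow, P_3^\downarrow)$ to $\bigl((w_0 P_1)^\uparrow, (w_0 P_2)^\uparrow, (w_0 P_3)^\uparrow\bigr)$, carries the partition $(P_1, P_2, P_3)$ to the partition $(w_0 P_1, w_0 P_2, w_0 P_3)$, and reverses all the comparisons involved. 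Applied to this dual data, the contrapositive of the first bullet becomes precisely the contrapositive of the second, so it suffices to prove the first.

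For the first bullet I would use the elementary reformulation: for equal-size subsets $A, B$ of $[n]$, the relation $A \le B$ holds if and only if $|A \cap [\ell]| \ge |B \cap [\ell]|$ for every $\ell \in [n]$. Writing $a = |P_1|$, $b = |P_2|$, $[\pi]_k = \{\pi_1, \ldots, \pi_k\}$, and $w^{(k)} = \{w_1, \ldots, w_k\}$, the goal reduces to verifying $|[\pi]_k \cap [\ell]| \ge |w^{(k)} \cap [\ell]|$ for all $k, \ell \in [n]$.

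I would split on the range of $k$: $k \le a$, $a \le k \le a + b$, and $a + b \le k \le n$. In each range $[\pi]_k$ has an explicit description from the block structure of $\pi$; for instance, in the middle range $[\pi]_k$ is the disjoint union of $P_1$ with the $k - a$ smallest elements of $P_2$, giving $|[\pi]_k \cap [\ell]| = |P_1 \cap [\ell]| + \min(k - a, |P_2 \cap [\ell]|)$. The $\min$ splits each range into two subcases. In the \emph{saturated} subcase the current block contributes its full prescribed length within $[\ell]$, and the inequality follows from the hypothesis $P_1 \le w$ together with the trivial bound $|w^{(k)} \cap [\ell]| \le |w^{(a)} \cap [\ell]| + (k - a)$. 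In the \emph{unsaturated} subcase the whole block fits inside $[\ell]$, so $|[\pi]_k \cap [\ell]|$ equals $|(P_1 \cup P_2) \cap [\ell]|$ (or $\ell$ in the last range), and the inequality follows from the stronger hypothesis $P_1 \cup P_2 \le w$ together with the containment $w^{(k)} \subseteq w^{(a+b)} \subseteq [n]$.

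The main friction is the case analysis, not any single step: each case is a short count of intersection sizes. The only non-trivial ingredient beyond the bookkeeping is the intersection-count reformulation of the subset order, and nothing beyond the block structure of $\pi$ and the two given inequalities at $k = a$ and $k = a + b$ is required.
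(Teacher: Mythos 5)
Your proof is correct. Structurally it takes the same first step as the paper: both bullets are handled via their contrapositives, namely that $P_1 \le w$ and $P_1 \cup P_2 \le w$ force $(P_1^\uparrow, P_2^\uparrow, P_3^\uparrow) \le w$, and dually for $v$. The paper, however, simply asserts these two facts without proof, treating them as routine. You go two steps further: first, you observe that conjugation by $w_0$ (which reverses the subset order, reverses Bruhat order, and sends $P_i^\downarrow$ to $(w_0 P_i)^\uparrow$) maps the second fact to an instance of the first, so only one needs to be verified --- the paper states both facts separately and does not point this out. Second, you actually prove the remaining fact via the standard characterization $A \le B \iff |A \cap [\ell]| \ge |B \cap [\ell]|$ for all $\ell$, carrying out the bookkeeping over the three ranges of $k$ and the saturated/unsaturated subcases; I checked the intersection counts, including the boundary case $k = a$ and the last range where $|[\pi]_k \cap [\ell]| = \ell$, and they are all correct. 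In short, you supply a complete argument where the paper only gestures, and the $w_0$-duality reduction is a nice economy that the paper does not use; neither approach buys more generality, but yours is self-contained.
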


\begin{proof}
For the permutation $w$, the proof follows from the fact that if $P_1 \le w$ and $P_1 \cup P_2 \le w$ then $(P_1^\uparrow, P_2^\uparrow, P_3^\uparrow) \le w$. For the permutation $v$, the proof follows from the fact that if $v \le P_1$ and $v \le P_1 \cup P_2$ then $v \le (P_1^\downarrow, P_2^\downarrow, P_3^\downarrow)$.
\end{proof}

\begin{proposition}\label{prop:w_three_parts}
Let $T = [IJ]$ be a semi-standard Young tableau with two columns $I$ and $J$. We have that $w_2^- = (J^\uparrow, I_-^\uparrow, \uparrow)$ and $w_1^+ = (I \backslash I_+^\downarrow, I_+^\downarrow, \downarrow)$ for some subsets $I_+, I_- \subseteq I$.
\end{proposition}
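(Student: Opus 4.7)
The plan is to characterize $w_2^-$ and $w_1^+$ directly as Bruhat extrema with explicit constraints, and then read off the claimed block structure. Using the fact that the Bruhat-minimum permutation whose first $|I|$ entries form $I$ is $(I^\uparrow,(I^c)^\uparrow)$, the defining-chain condition for $T=[IJ]$ reduces to: $w_2^-$ is the Bruhat-minimum permutation whose first $|J|$ entries form $J$ and which satisfies $(I^\uparrow,(I^c)^\uparrow)\le w_2^-$; dually, $w_1^+$ is the Bruhat-maximum permutation whose first $|I|$ entries form $I$ and which satisfies $w_1^+\le (J^\downarrow,(J^c)^\downarrow)$.

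For $w_2^-$, minimization immediately forces the first $|J|$ positions to be $J^\uparrow$. Letting $K$ denote the set of the first $|I|$ entries of $w_2^-$, the Bruhat condition yields $K\supseteq J$, $|K|=|I|$, and $K\ge I$; minimality then selects $K$ as the set-wise minimum subset with these properties, after which the remaining positions must hold $[n]\setminus K$ in ascending order. Setting $I_- := K\setminus J$, the core claim becomes $I_-\subseteq I$. I would prove this by a swap argument using the threshold characterisation $X\le Y \iff |X\cap[r]|\ge|Y\cap[r]|$ for all $r\in[n]$. Suppose for contradiction that there exists $s\in K\setminus(I\cup J)$, and define $f(r):=|I\cap[r]|-|K\cap[r]|$; then $f\ge 0$ (as $K\ge I$) and $f(s-1)=f(s)+1\ge 1$ (since $s\in K\setminus I$ gives $f(s)=f(s-1)-1$), forcing $I\setminus K$ to contain some element strictly less than $s$. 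Taking $y$ to be the largest such element, no element of $I\setminus K$ lies in the open interval $(y,s)$, so $f$ is non-increasing on $[y,s-1]$; combined with $f(s-1)\ge 1$, this gives $f(r)\ge 1$ throughout $[y,s-1]$. This is precisely the condition for $K':=(K\setminus\{s\})\cup\{y\}$ to satisfy $K'\ge I$, and $J\subseteq K'$ since $y\notin J$ (because $J\subseteq K$ and $y\notin K$). Hence $K'$ is a valid but set-wise smaller alternative to $K$, contradicting minimality.

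For $w_1^+$, the dual reduction gives that the first $|I|$ entries form $I$ exactly, so the first $|J|$ entries automatically form a subset $L\subseteq I$, and no swap argument is required to place $L$ inside $I$. Maximality then characterises $L$ as the set-wise maximum subset of $I$ with $|L|=|J|$ and $L\le J$, after which the remaining entries are arranged in descending order, yielding $w_1^+=(L^\downarrow,(I\setminus L)^\downarrow,([n]\setminus I)^\downarrow)$. Setting $I_+:=I\setminus L\subseteq I$ gives the claimed form. The main obstacle of the proof is the swap argument for $w_2^-$: verifying $K'\ge I$ after the swap is the crux, and the threshold characterisation together with the monotonicity of $f$ on $[y,s-1]$ reduces it to a clean counting argument.
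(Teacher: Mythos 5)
Your proof is correct and follows the same high-level reduction as the paper: both identify $w_1^-=(I^\uparrow,\uparrow)$, $w_2^+=(J^\downarrow,\downarrow)$, and then characterise $w_2^-$ as the Bruhat-minimum permutation $\sigma$ with $\sigma([\,|J|\,])=J$ and $w_1^-\le\sigma$ (dually for $w_1^+$). Where the paper then simply asserts ``it easily follows'', you supply the missing combinatorial step, namely that $K:=w_2^-([\,|I|\,])$ satisfies $K\setminus J\subseteq I$, via a swap argument based on the threshold description $X\le Y \iff |X\cap[r]|\ge|Y\cap[r]|$ for all $r$. This is genuinely the crux; there is no naive description of $I_-$ (it is not, e.g., the smallest $|I|-|J|$ elements of $I\setminus J$), and your observation that the dual case $w_1^+$ needs no such argument because $L\subseteq I$ is automatic is exactly the right asymmetry to exploit.

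Two points are left implicit and would need to be made explicit for a fully airtight write-up. First, to turn ``$K'$ is set-wise smaller'' into a contradiction with Bruhat-minimality of $w_2^-$, you must exhibit a permutation $\sigma'$ with $\sigma'([\,|J|\,])=J$, $\sigma'([\,|I|\,])=K'$, and $w_1^-\le\sigma'$; the candidate $\sigma'=(J^\uparrow,(K'\setminus J)^\uparrow,\uparrow)$ works, but verifying $w_1^-\le\sigma'$ at the intermediate ranks $|J|<k<|I|$ and at $k>|I|$ uses the same threshold inequality again and is not a free step. Second, the statement to be proved also asserts that positions $|J|+1,\dots,|I|$ of $w_2^-$ carry $K\setminus J$ in \emph{increasing} order; your argument determines $K$ but does not address the internal ordering of that block, which again follows from minimality of the intermediate subsets $w_2^-([k])$ once $K$ is fixed. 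Both gaps are fillable with the tools you already use, so the argument stands.
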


\begin{proof}
For any permutation $w = (w_1, \dots, w_n) \in S_n$ and $k \le n$, we write $w([k]) = \{ w_1, \dots, w_k\}$. We define $s = |J|$ and $t = |I|$ for the size of the columns of $T$. Note that $w_1^- = (I^\uparrow, \uparrow)$ is the smallest permutation such that $w_1^-([t]) = I$, i.e for all permutations $v$ with $v([t]) = I$ we have $w_1^- \le v$. Similarly $w_2^+ = (J^\downarrow, \downarrow)$ is the greatest permutation such that $w_2^+([s]) = J$.
It follows from the definition that $w_2^-$ is the smallest permutation such that $w_1^- \le w_2^-$ and $w_2^-([s]) = J$. It easily follows that $w_2^-$ has the desired form. Similarly, by definition, $w_1^+$ is the greatest permutation such that $w_1^+ \le w_2^+$ and $w_1^+([t]) = I$. And so $w_1^+$ has the desired form.
\end{proof}

\subsection{Standard monomials.}\label{sec:standard_monomial}
The Pl\"ucker algebra 
of the flag variety is given by  $\mathbb{K}[P_J]_{J\subset [n]}/I_{n}$, where $I_n$ is the Pl\"ucker ideal. 
A monomial $P = P_{J_1}\dots P_{J_d}$ is called \textit{standard for $\Flag_n$} if $J_1 \le \dots \le J_d$. A \textit{standard monomial basis} of a Richardson variety is a collection of standard monomials which form a basis for the corresponding Pl\"ucker algebra.
We refer to \cite[\S 2.2]{seshadri2016introduction} for more details.
To simplify our notation, we identify the semi-standard Young tableau $T = [J_1, \dots, J_d]$ with the monomial $P_{J_1}\dots P_{J_d} \in \KK[P_J]$, following the notation of Section~\ref{sec:prelim_tableaux}. The standard monomials for Richardson varieties can be determined by minimum and maximum defining chains, as follows (see e.g.~\cite[Theorem~34]{lakshmibai2003richardson}).

\begin{theorem}
\label{prop:Kim}
Let $v \le w$ be permutations. The collection of semi-standard Young tableaux $T$ such that $w^-_{|T|} \le w$ and $v \le w^+_{1}$ forms a monomial basis for $X_w^v$, where $w_-(T) = (w^-_1, \dots, w^-_{|T|})$ and $w_+(T) = (w^+_1, \dots, w^+_{|T|})$ are the unique minimum and maximum defining chains for $T$, respectively.
\end{theorem}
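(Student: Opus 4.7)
The plan is to deduce the statement for Richardson varieties by combining the standard monomial theorems for the Schubert variety $X_w$ and the opposite Schubert variety $X^v$, and then showing that the two characterizations interact cleanly at the level of the scheme-theoretic intersection $X_w^v = X_w \cap X^v$. More precisely, I would invoke the classical standard monomial theory of Lakshmibai--Musili--Seshadri \cite[Chapter~2]{seshadri2016introduction}: the collection
\[
\mathcal{B}_w := \{T : w^-_{|T|}(T) \le w\}
\]
descends to a $\KK$-basis of $\KK[X_w]$, and by the symmetric statement applied via the identification $X^v = w_0 X_{w_0 v}$ from Section~\ref{sec:Richardson}, the collection
\[
\mathcal{B}^v := \{T : v \le w^+_{1}(T)\}
\]
descends to a $\KK$-basis of $\KK[X^v]$. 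The theorem then amounts to showing that $\mathcal{B}_w^v := \mathcal{B}_w \cap \mathcal{B}^v$ descends to a basis of $\KK[X_w^v]$.

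For linear independence, each $T \in \mathcal{B}_w^v$ is non-vanishing on $X_w$ and on $X^v$ by the two Schubert SMTs. Since $X_w^v$ is non-empty (because $v \le w$) and irreducible by Richardson \cite{richardson1992intersections}, the restriction of $T$ to $X_w^v$ is non-zero. To upgrade this to linear independence of the full family $\mathcal{B}_w^v$, I would lift any putative relation to $\KK[\Flag_n]$ modulo $I(X_w) + I(X^v)$ and appeal to Hodge's linear independence of standard monomials on $\Flag_n$, using that the Schubert SMT characterizes the standard-monomial content of $I(X_w)$ modulo $I_n$ as exactly those $T$ with $T \notin \mathcal{B}_w$, and symmetrically for $X^v$.

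For spanning, I would apply the Plücker straightening algorithm to rewrite any element of $\KK[P_J : J \in T_w^v]$, lifted to $\KK[P_J : \varnothing \neq J \subsetneq [n]]$, as a $\KK$-linear combination of standard monomials modulo $I_n$. The ideal description \eqref{eq:restric} combined with the Schubert identification of $I(X_w)$ and $I(X^v)$ modulo $I_n$ then implies that only tableaux in $\mathcal{B}_w^v$ survive modulo $I(X_w^v)$, yielding the spanning property.

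The main obstacle is the non-vanishing direction of the classical Schubert input, namely producing, for each $T \in \mathcal{B}_w$, a point of $X_w$ at which $T$ does not vanish. This is typically handled either by exhibiting explicit representatives in the open Bruhat cell $B \sigma_w B / B$, or representation-theoretically by identifying the space of sections $H^0(X_w, \mathcal{L}_\lambda)$ with a Demazure module whose weight basis is indexed precisely by the tableaux in $\mathcal{B}_w$ satisfying the defining-chain condition. Once both Schubert statements are in hand, the intersection argument above proceeds formally, and the Richardson theorem follows.
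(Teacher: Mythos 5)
The paper itself does not prove this theorem; it is cited from \cite[Theorem~34]{lakshmibai2003richardson}, so there is no in-paper argument to compare against. Your overall strategy of combining the Schubert SMT for $X_w$ with the opposite Schubert SMT for $X^v$ and then passing to the ideal sum is essentially the classical route, and the second (``upgrade'') paragraph of your linear-independence argument, together with the straightening argument for spanning, is sound given the inputs you list.

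Two points need repair. First, the opening step of your linear-independence argument is a non sequitur: from the fact that $T$ restricts to a non-zero element of $\KK[X_w]$ and a non-zero element of $\KK[X^v]$ it does not follow, even granting irreducibility of $X_w^v$, that $T$ is non-zero on $X_w^v$. A function can be non-vanishing on each of two irreducible varieties while vanishing identically on their intersection (e.g.\ $y+z$ on the hyperplanes $\{y=0\}$ and $\{z=0\}$ in $\AA^3$, meeting in the $x$-axis). The actual reason each $T\in\mathcal{B}_w^v$ is non-zero on $X_w^v$ is that each of its columns $J_i$ satisfies $v\le J_i\le w$, so each $P_{J_i}$ is non-zero in $\KK[X_w^v]$, which is a domain since $X_w^v$ is integral; in any case this step is subsumed by your ideal-theoretic argument and can be dropped. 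Second, the identity $I(X_w^v)=I(X_w)+I(X^v)$ modulo $I_n$, which you invoke implicitly via \eqref{eq:restric}, is exactly the assertion that the scheme-theoretic intersection $X_w\cap X^v$ is reduced. This is a genuine theorem (typically proved via Frobenius splitting, or obtained as a by-product of the SMT development itself) and must be listed explicitly among your inputs alongside the two Schubert SMTs and Hodge's theorem; it is not a formal consequence of the set-theoretic equality $X_w^v = X_w\cap X^v$. Your remark about $X^v=w_0 X_{w_0 v}$ also glosses over the fact that $w_0$-translation reverses the chain inequalities, so the deduction of the opposite Schubert SMT from the Schubert one requires a separate, if routine, verification.
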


\section{Standard monomials 
}\label{sec:SMT}

The description of the standard monomials for Richardson varieties in Theorem~\ref{prop:Kim} can be combinatorially difficult to determine. The goal of this section is to prove
Theorem~\ref{thm:ssyt_fl_diag} which is our main result and gives a very simple description of the standard monomials for the Richardson varieties $X_w^v$ with $(v,w) \in \MT_n$. 
We note that, in contrast to the Grassmannian, the monomials associated to the tableaux $SSYT_d(v,w)$ may not constitute a monomial basis for the Richardson variety $X_w^v$ inside the flag variety, see Example~\ref{example1}. However, if $(v, w)\in \MT_n$ then Theorem~\ref{thm:ssyt_fl_diag} shows that the semi-standard Young tableaux $SSYT_d(v,w)$ do in fact form a standard monomial basis for the Richardson variety $X_w^v$.

\medskip
We will now introduce the set of pairs of permutations $\MT_n$ inductively. In order to define this set we require the following. For any permutation $w  = (w_1, \dots, w_n) \in S_n$ with $w_t = n$ for some $t \in [n]$, we define the \textit{induced permutation} $\underline w = (w_1, \dots, w_{t-1}, w_{t+1}, \dots, w_n) \in S_{n-1}$.

\begin{definition}[Compatible pairs]\label{def:compatible}
Let $v,w\in S_n$ with $n = v_t = w_{t'}$ and $n-1 = v_s = w_{s'}$. We say that $(v, w)$ is a \textit{compatible} pair if either (i) $t = t'$ or (ii) $t' < t$ and in this case the following conditions hold:
\[
s'\le t, \quad t'\le s, \quad n=w_{t'}>w_{t'+1}>\cdots>w_t,\quad\text{and}\quad n=v_{t}>v_{t-1}>\cdots>v_{t'}.
\]
We define the set of pairs of permutations $\MT_n \subseteq S_n \times S_n$ inductively by
\begin{equation}\label{eq:def_TN}
\MT_1 = \{ (id, id)\}, \ 
\MT_{n+1} = \{(v,w) \in S_{n+1} \times S_{n+1} : (\underline v, \underline w) \in \MT_n \textrm{ and } (v,w) \textrm{ compatible}\}.
\end{equation}
\end{definition}

\begin{example} 
{\rm Consider $\big((1,3,2),(2,3,1)\big)$ in $\MT_{3}$. 
We find all $(v,w)$ in $\MT_{4}$ with $\underline{v} = (1,3,2)$ and $\underline{w} = (2,3,1)$. Firstly, if we have $4 = v_t = w_t$ for some $t$ then we have the following pairs:
   \[
   \big((1,3,2,\textbf{4}),(2,3,1,\textbf{4})\big),\quad 
    \big((1,3,\textbf{4},2),(2,3,\textbf{4},1)\big),\]
    \[
    \big((1,\textbf{4},3,2),(2,\textbf{4},3,1)\big),\quad 
    \big((\textbf{4},1,3,2),(\textbf{4},2,3,1)\big).
    \]
Secondly, for a compatible pair $(v,w)$ such that $v_t = 4$, $w_{t'} = 4$, assume that $t \neq t'$. Since $v \le w$ we have that $t > t'$. So we get the pair:
 $((1,3,\textbf{4},2),(2,\textbf{4},3,1))$.
}
\end{example}

\begin{example}\label{example1} Consider  $\Flag_3$ and $w=(3,1,2)\in S_3$. Note that $P_{23}P_1=P_{13}P_2-P_{12}P_3$.
Consider the tableaux below:
\[
T_1 = 
\begin{tabular}{cc}
    \hline
    \multicolumn{1}{|c|}{1} & \multicolumn{1}{c|}{3} \\ \hline
    \multicolumn{1}{|c|}{2} &   \\ \cline{1-1}
\end{tabular}
\quad \text{and} \quad T_2=\begin{tabular}{cc}
    \hline
    \multicolumn{1}{|c|}{1} & \multicolumn{1}{c|}{2} \\ \hline
    \multicolumn{1}{|c|}{3} &   \\ \cline{1-1}
    \end{tabular}.
\]
So we have $P_{23}P_{1}= T_2 - T_1$. We have that 
$P_{23}$ vanishes on $X_w$, since $\{2,3\}\nleq\{3,1\}$ and by the defining ideal of the Schubert variety $X_w$. 
And so $T_1$ and $T_2$ are equal in the coordinate ring of the Schubert variety $X_w$, in particular $T_1$ and $T_2$ are linearly dependent.
Therefore, the set $SSYT_1(id, w)$ is not a monomial basis for the Schubert variety $X_w = X_w^{id}$.
\end{example}

\noindent

\subsection{Block structure on compatible permutations.}\label{sec:Tn_block_structure}
To prove Theorem~\ref{thm:ssyt_fl_diag}, we need to introduce a \textit{block} structure on the pairs $(v,w) \in \MT_n$. 

\begin{definition}
Let $(v,w) \in \MT_n$ and $v_d = w_e = n$ for some $e, d \in [n]$. A \textit{block} of $(v,w)$ is a pair of consecutive subsets of $v$ and $w$ on the same indices which we write as $(v,w)_i^j = (\{v_i, v_{i+1}, \dots, v_j\}, \{ w_i, w_{i+1}, \dots, w_j\})$ for some $i \le j$ and satisfies one of the following criteria. Note that the persistence and expansion criteria are defined inductively on $n$.
\begin{itemize}
    \item (Creation) If $i = j = e = d$ then $(v,w)_i^j$ is a block.
    
    \item (Persistence) Assume that $n \notin \{v_i, \dots, v_j \}$ and $n \notin \{w_i, \dots, w_j \}$. If $(v,w)_i^j$ is a block for $(\underline v, \underline w)$ then $(v,w)_i^j$ is a block for $(v, w)$.
    
    \item (Expansion) Assume that $n \in \{v_i, \dots, v_j \}$ and $n \in \{w_i, \dots, w_j \}$. In addition, assume that $i < d$ and  $e < j$. If 
    \[
    (\underline v, \underline w)_i^{j-1} = 
    (\{v_i, \dots, v_{d-1},v_{d+1} \dots, v_j\},
    \{w_i, \dots, w_{e-1}, w_{e+1}, \dots, w_j \})
    \]
    is a block for $(\underline v, \underline w)$ then $(v,w)_i^j$ is a block for $(v,w)$.
\end{itemize}
The \textit{size} of a block $(v,w)_i^j$ is equal to $j - i + 1$.
\end{definition}

\begin{example}{\rm Here we give two examples illustrating properties of blocks.
\begin{itemize}
    \item Let $v = (3,5,6,4,1,2)$ and $w = (4,6,5,3,2,1)$ then there are three distinct blocks
    \[
    (v,w)_5^6 = (\{1,2\}, \{2,1\}), \quad 
    (v,w)_1^4 = (\{3,5,6,4\}, \{4,6,5,3\}), \quad 
    (v,w)_2^3 = (\{5,6\}, \{6,5\}).
    \]
    We see that blocks are either disjoint: such as $(v,w)_5^6$ and $(v,w)_1^4$, or subsets of one another: such as $(v,w)_1^4$ and $(v,w)_2^3$.
    
    \item For $v = (1,2,4,5,3)$ and $w = (2,4,5,3,1)$ the only block is $(v, w)_1^5$. Note that, $1 < 2 < 4 < 5$ is an increasing sequence in $v$ and $5 > 3 > 1$ is a decreasing sequence in $w$.
\end{itemize}
}\end{example}

\begin{definition}
We say two distinct blocks $(v,w)_{i}^j$ and $(v,w)_{k}^\ell$ are \textit{crossing} if $i<k<j<\ell$ or $k < i < \ell < j$. Otherwise, they are called \textit{non-crossing} and must be either disjoint: i.e.~$j < k$ or $\ell < i$, or contained in one another: i.e.~$i < k < \ell < j$ or $k < i < j < \ell$. 
\end{definition}

\begin{proposition}\label{prop:unified}
Let $(v,w) \in \MT_n$, then the following hold:
\begin{itemize}
    \item[{\rm(a)}] For any block $(v,w)_{i}^j$  we have $\{v_i, \dots, v_j \} = \{w_i, \dots, w_j \}$ and $v_i = w_j = \min\{v_i, \dots, v_j \}$.
    
    \item[{\rm(b)}] Any pair of distinct blocks are non-crossing.
\end{itemize}
\end{proposition}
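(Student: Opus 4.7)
The plan is to prove both parts simultaneously by induction on $n$. The base case $n = 1$ is immediate, since $\MT_1 = \{(\id,\id)\}$ and the only block is $(\{1\},\{1\})$. For the inductive step, I would let $(v,w) \in \MT_n$ with $d = v^{-1}(n)$ and $e = w^{-1}(n)$, noting that by compatibility either $d = e$ or $e < d$ with $v$ strictly increasing and $w$ strictly decreasing on positions $[e,d]$. The inductive hypothesis gives both (a) and (b) for $(\underline v,\underline w) \in \MT_{n-1}$.

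For part (a), I would induct on the rule producing the block $(v,w)_i^j$. Creation is trivial: $(\{n\},\{n\})$ satisfies the conditions vacuously. Persistence inherits the set equality and the minimum property directly from $(\underline v,\underline w)$, since removing $n$ does not alter the sets $\{v_i,\dots,v_j\}$ or $\{w_i,\dots,w_j\}$, and the boundary entries $v_i, w_j$ are unchanged by the operation of passing to $(\underline v,\underline w)$. For expansion, the inductive hypothesis applied to $(\underline v,\underline w)_i^{j-1}$ provides the set equality
\[
\{v_i,\dots,v_{d-1},v_{d+1},\dots,v_j\} = \{w_i,\dots,w_{e-1},w_{e+1},\dots,w_j\},
\]
with common minimum $v_i$ (preserved since $i < d$) and $w_j$ (preserved since $e < j$). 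Adjoining $n$, the global maximum of $[n]$, to both sides yields equal sets and does not alter the minimum, completing the expansion case.

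For part (b), I would perform a case analysis on the two blocks. Two persistence blocks correspond, via a uniform index shift, to two distinct non-crossing blocks of $(\underline v,\underline w)$, so non-crossing transfers. A creation block is a singleton at position $d = e$, hence contained in every expansion block (which has $i < d$ and $e < j$) and disjoint from every persistence block (which avoids $d = e$). Two expansion blocks each contain positions $d$ and $e$; the smaller underlying blocks in $(\underline v,\underline w)$ are non-crossing by the inductive hypothesis, and extending each by one position to the right preserves this relation. A persistence block paired with an expansion block is the subtlest case: the expansion block contains $[e,d]$ in its range, and I would argue that the persistence block must lie strictly to one side of $[e,d]$, whence the two are either nested or disjoint.

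The main obstacle I anticipate is the bookkeeping required to relate indices in $(v,w)$ to those in $(\underline v,\underline w)$, since positions shift differently on the two sides of $d$ (within $v$) and $e$ (within $w$), and these splits may not align when $d \neq e$. The crucial auxiliary fact, which the compatibility conditions are engineered to guarantee, is that any persistence block has all its positions on the same side of the interval $[e,d]$, so that the same shift applies to both its $v$-positions and $w$-positions. Establishing this cleanly, using the strict monotonicity of $v$ and $w$ on $[e,d]$, will be the heart of the argument; once it is in place, the inductive proof outlined above goes through.
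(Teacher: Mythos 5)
Your overall strategy (induction on $n$, together with a case analysis on the creation/persistence/expansion rule) matches the paper's proof closely, and your treatment of part (a) is correct: creation is trivial, persistence inherits the data unchanged, and expansion simply adjoins the global maximum $n$, leaving the minimum and the set equality intact. You also correctly flag the index-bookkeeping concern for persistence; note, though, that it is resolved by the definition itself rather than by an auxiliary use of compatibility — if the block interval $[i,j]$ lay strictly between $e$ and $d$, then deleting $v_d = n$ and $w_e = n$ would shift the $v$-positions and $w$-positions by different amounts, so no single interval $[i',j']$ in $(\underline v, \underline w)$ can produce the pair of sets $(\{v_i,\dots,v_j\},\{w_i,\dots,w_j\})$, and the persistence criterion simply cannot fire. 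So there is no such block to worry about.

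The genuine gap is in part (b), specifically the persistence-versus-expansion case. You assert that because a persistence block $[i,j]$ lies strictly to one side of $[e,d]$ (say $j < e$) while an expansion block $[k,\ell]$ contains $[e,d]$ (so $k \le e \le d \le \ell$), ``whence the two are either nested or disjoint.'' This does not follow: nothing you have said rules out $i < k \le j < e \le \ell$, which is precisely a crossing configuration. To close the gap you must do what the paper does — show that any purported crossing pair of blocks in $(v,w)$ projects to a crossing pair in $(\underline v, \underline w)$, contradicting the inductive hypothesis. Concretely, with $j < e$ the persistence block projects to $[i,j]$ (no shift) and the expansion block to $[k,\ell-1]$; since $\ell > e > j$ one gets $i < k < j < \ell - 1$, still crossing. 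The paper's single observation — a crossing pair has both blocks of size at least two, hence both arise by persistence or expansion, hence the pair descends to a crossing pair of $(\underline v, \underline w)$ — covers all your cases uniformly and avoids the enumeration entirely; your sub-cases for two persistence blocks and two expansion blocks are sound, but the mixed case needs this projection step, not the geometric ``whence.''
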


\begin{proof}
For (a) we proceed by double induction: first on $n$ and then on the size of the block which is $j - i + 1$. If the block has size $1$, i.e.~$i = j$, then the result holds trivially. So let us assume that $j - i \ge 1$. If $(v,w)_i^j$ is a block for $(\underline v, \underline w)$ then the results follows by induction on $n$. Otherwise if $(v,w)_i^j$ is not a block of $(\underline v, \underline w)$ then $(v,w)_i^j$ must occur by the expansion criterion and so by induction the result holds.

Part (b) follows easily by induction $n$, noting that if two distinct blocks are crossing then each has size at least two and must arise, either by the persistence or expansion criteria, from a pair of crossing blocks of $(\underline v, \underline w)$.
\end{proof}

\begin{definition}
Let $(v,w) \in \MT_n$. By Proposition~\ref{prop:unified}(b) we have that the blocks containing $v_d = w_e = n$ are totally ordered by inclusion. The smallest such block is called the \textit{maximum block of $(v,w)$}.
\end{definition}

\begin{proposition}\label{prop:block_max_order}
Let $(v,w) \in \MT_n$ and $(v,w)_{i}^j$ 
be the maximum block of $(v,w)$. Write $v_d = w_e = n$ for some $d, e \in [n]$. Then we have that $v_i < v_{i+1} < \dots < v_d$ and $w_e > w_{e+1} > \dots > w_j$.
\end{proposition}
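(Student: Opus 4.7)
My plan is to prove this by induction on $n$. The base case $n=1$ is immediate since the only permutation is the identity and the unique block is a singleton. For the inductive step, fix $(v,w) \in \MT_n$ with maximum block $(v,w)_i^j$ and $v_d = w_e = n$, and split according to how this block was constructed under Definition~\ref{def:compatible} and the block criteria of Section~\ref{sec:Tn_block_structure}.

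If $d = e$, then Creation produces the singleton $(v,w)_d^d$, which automatically is the smallest block containing $n$ and thus equals the maximum block; the claim holds trivially. If $d \neq e$, the compatibility condition case (ii) in Definition~\ref{def:compatible} forces $e < d$ together with $v_e < v_{e+1} < \cdots < v_d$ and $w_e > w_{e+1} > \cdots > w_d$. Since the maximum block must contain both positions $d$ and $e$, we have $i \le e < d \le j$, and if $i = e$ and $j = d$ the claim is exactly the compatibility chains. Otherwise the max block is strictly larger than $(v,w)_e^d$, and because Persistence cannot yield a block containing $n$, it must arise via Expansion from a block $(\underline v,\underline w)_i^{j-1}$ of $(\underline v,\underline w) \in \MT_{n-1}$ with $i < d$ and $e < j$.

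The pivotal step is to show that this sub-block is itself the maximum block of $(\underline v, \underline w)$. I would argue by contradiction: if a strictly smaller block of $(\underline v,\underline w)$ contained $n-1$, then by Proposition~\ref{prop:unified}(b) it would be nested inside $(\underline v,\underline w)_i^{j-1}$, and applying Expansion to it (whose hypotheses are guaranteed by the compatibility inequalities $s' \le d$ and $e \le s$ controlling the positions $s, s'$ of $n-1$ in $v, w$) would produce a strictly smaller block of $(v,w)$ containing $n$, contradicting the maximality of $(v,w)_i^j$. Having identified the sub-block with the maximum block of $(\underline v,\underline w)$, the inductive hypothesis supplies $\underline v_i < \cdots < \underline v_{\tilde d}$ and $\underline w_{\tilde e} > \cdots > \underline w_{j-1}$, where $\tilde d, \tilde e$ are the positions of $n-1$ in $\underline v, \underline w$; translating via the index shifts $\underline v_k = v_k$ for $k < d$ and $\underline v_k = v_{k+1}$ for $k \ge d$ (with the analogous formulas for $w$), and splicing the resulting inequalities with the compatibility chains on $[e,d]$, yields the desired $v_i < v_{i+1} < \cdots < v_d$ and $w_e > w_{e+1} > \cdots > w_j$.

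The main obstacle is precisely this identification step. The contradiction via expansion is clean when the sub-block contains $n-1$, but one must separately handle the edge configuration where $n-1$ lies outside the sub-block (i.e., $s > j$ and $s' < i$). In that case the sub-block arises from Persistence within $(\underline v, \underline w)$, and I expect the cleanest fix is to cascade the induction one further level down to $(\underline{\underline v}, \underline{\underline w}) \in \MT_{n-2}$ to import monotonicity on the Persistence source, then combine it with the compatibility chains on $[e,d]$ to close the argument without ever invoking a block containing $n-1$.
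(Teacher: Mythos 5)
Your overall scaffolding matches the paper's: both induct on $n$, reduce to $(\underline v, \underline w)$ via the Expansion criterion, identify $(\underline v, \underline w)_i^{j-1}$ with the maximum block of $(\underline v, \underline w)$, and then splice the induced monotonicity with the compatibility chains on $[e,d]$. But the crucial identification step is argued in the opposite direction, and that is where your gap lives.

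The paper argues \emph{upward}: it starts from the maximum block of $(\underline v, \underline w)$ --- which by definition contains $n-1$ --- and uses the compatibility inequalities $d' \ge e$ and $e' \le d$ to check that Expansion applies to it, producing a block of $(v,w)$ that contains $n$ and therefore contains $(v,w)_i^j$. Since $(\underline v, \underline w)_i^{j-1}$ then sits inside the maximum block of $(\underline v, \underline w)$, and a maximum block properly contains no other blocks, the two coincide. You argue \emph{downward by contradiction}: suppose $(\underline v, \underline w)_i^{j-1}$ is not the maximum block of $(\underline v, \underline w)$; then if a strictly smaller block containing $n-1$ is nested inside it, Expansion would yield a strictly smaller block of $(v,w)$ containing $n$, contradicting maximality. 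This is a genuinely different route to the same conclusion. The trouble is that non-crossing does not by itself give nesting --- the smaller block containing $n-1$ could a priori be \emph{disjoint} from $(\underline v, \underline w)_i^{j-1}$, which happens precisely when $n-1 \notin \{v_i,\dots,v_j\}$ (the configuration $d' > j$, $e' < i$ you flag). You acknowledge this in the final paragraph but only propose to ``cascade the induction one further level down,'' without showing that this configuration is impossible or that the cascade actually closes. That is a real hole: nothing in your argument as written rules the disjoint configuration out, and without ruling it out the contradiction never fires. The paper's upward argument avoids the question entirely because it never needs to know in advance whether $(\underline v, \underline w)_i^{j-1}$ contains $n-1$; the equality it establishes is what \emph{implies} that the sub-block contains $n-1$. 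So while your strategy is sound in spirit and does add value in making the role of the compatibility inequalities more explicit, it is not a complete proof until the disjoint case is honestly dispatched.
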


\begin{proof}
Throughout the proof we write $w_{e'} = v_{d'} = n-1$ for some $e', d' \in [n]$. We proceed by double induction, first by induction on $n$ and then by induction on the size of the maximum block. For any $n$, if the maximum block has size one then the result trivially holds. For the induction step, assume the maximum block has size at least two. It immediately follows that $e < d$, so by compatibility of $v$ and $w$, we have: $v_e < \dots < v_d$, $w_e > \dots > w_d$, $d' \ge e$ and $e' \le d$.

Let us write $(\underline v, \underline w)_k^{\ell - 1}$ for the maximum block of $(\underline v, \underline w)$ for some $k,\ell \in [n]$, which is the smallest block containing $n-1$. Since $d' \ge e$ and $e' \le d$, by the expansion criterion, we have that $(v,w)_k^\ell$ is a block containing $n$. Since $(v,w)_i^j$ is the maximum block of $(v,w)$ we have that $(v,w)_k^\ell$ is contained in $(v,w)_i^j$, i.e.~$k \le i$ and $j \le \ell$. Since $n$ is contained in the maximum block of $(v,w)$, we must have that the maximum block is obtained from the block $(\underline v, \underline w)_{i}^{j-1}$ in $(\underline v, \underline w)$ by the expansion criterion. Therefore, $(\underline v, \underline w)_{i}^{j-1}$ is a block contained in the maximum block of $(\underline v, \underline w)$. However, it can be shown directly from the definition that the maximum block does not properly contain any other blocks. Therefore, $(\underline v, \underline w)_{i}^{j-1} = (\underline v, \underline w)_{k}^{\ell-1}$ is the maximum block and so by induction we have $v_i < \dots < v_{d'}$ and $w_{e'} > \dots > w_j$. By compatibility we have $v_e < \dots < v_d$ and $w_e > \dots > w_d$. Since $d' \ge e$ and $e' \le d$, it follows that $v_i < \dots < v_d$ and $w_e > \dots > w_j$.
\end{proof}

\begin{proposition}\label{prop:max_block_elements}
Let $(v,w) \in \MT_n$ and $(v,w)_i^j$ be the maximum block of $(v,w)$. Then we have $\{v_i, v_{i+1}, \dots, v_j \} = \{w_i, w_{i+1}, \dots, w_j \} = \{v_i, v_i + 1, \dots, n \} = \{w_j, w_j + 1, \dots, n \}$.
\end{proposition}

\begin{proof}
By Proposition~\ref{prop:unified}(a), it suffices to show that $\{v_i, v_{i+1}, \dots, v_j \} = \{v_i, v_i + 1, \dots, n \}$. We proceed by induction on the size of the maximum block $j - i + 1$. If the maximum block has size $1$, then $(v,w)_i^j = (\{n \}, \{n \})$ and the result holds trivially. 

Assume that the maximum block has size $j - i + 1 \ge 2$ and write $v_d = w_e = n$ and $w_{e'} = v_{d'} = n-1$ for some $d,d',e,e' \in [n]$. Since $(v,w) \in \MT_n$, we have that $v \le w$ and so $e \le d$. If $e = d$, then $(v,w)_d^d$ is a block that is properly contained in the maximum block $(v,w)_i^j$, a contradiction. So we must have $e < d$. In particular, $e < j$ and $i < d$. By the expansion criterion, we have that $(\underline v, \underline w)_i^{j-1}$ is a block for $(\underline v, \underline w)$. 

We proceed by showing that $(\underline v, \underline w)_i^{j-1}$ is the maximum block for $(\underline v, \underline w)$. To do this, we first show that $n-1 \in \{v_i, \dots, v_j \}$. Assume by contradiction that $n-1 \notin \{v_i, \dots, v_j \} = \{w_i, \dots, w_j \}$. Since $(v,w)$ are compatible, we have that $d' \ge e$ and $e' \le d$. Therefore $d' \ge j+1$ and $e' \le i - 1$. By the persistence criterion, we have that $\left(\underline{\underline{v} }, \underline{\underline{w} } \right)_i^{j-1}$ is a block for $\left(\underline{\underline{v} }, \underline{\underline{w} } \right)$. We write $\left(\underline{\underline{v} }, \underline{\underline{w} } \right)_i^{j-1} = (\{\tilde v_i, \dots, \tilde v_{j-1}\}, \{\tilde w_i, \dots, \tilde w_{j-1}\})$. Since $i < d \le j < d'$, we have that $\{\tilde v_i, \dots, \tilde v_{j-1}\} =  \{v_i, \dots,  v_j\} \backslash \{n\}$. Since $e' < i \le e < j$, we have that $\{\tilde w_i, \dots, \tilde w_{j-1}\} = \{w_{i+1}, \dots, w_{j+1}\} \backslash \{n\}$. By Proposition~\ref{prop:unified}(a) applied to $(v,w)_i^j$ we have that $\{v_i, \dots, v_j\} = \{w_i, \dots,  w_j\}$. Since $(v, w) \in \MT_n$, we have that $\left(\underline{\underline{v} }, \underline{\underline{w} } \right) \in \MT_{n-2}$. So, by Proposition~\ref{prop:unified}(a) applied to $\left(\underline{\underline{v} }, \underline{\underline{w} } \right)_i^{j-1}$, we have that $\{\tilde v_i, \dots, \tilde v_{j-1}\} = \{\tilde w_i, \dots, \tilde w_{j-1}\}$. However, by the above we have $w_i \in \{\tilde v_i, \dots, \tilde v_{j-1}\}$ but $w_i \notin  \{\tilde w_i, \dots, \tilde w_{j-1}\}$, a contradiction.

Next, we show that $(\underline v, \underline w)_i^{j-1}$ does not properly contain another block. Assume by contradiction that $(\underline v, \underline w)_{i'}^{j'-1}$ is a block containing $n-1$ and is properly contained in $(\underline v, \underline w)_i^{j-1}$. By compatibility, we have that $d' \ge e$ and $v_e < v_{e+1} < \dots < v_{d} = n$. It follows that $d' \ge d-1$, and so $j' \ge d$. Similarly, by compatibility, we have $e' \le d$ and $n = w_e > w_{e-1} > \dots > w_d$. If follows that $e' \le e + 1$, and so $i' \le e$. By the expansion criterion, we have that $(v,w)_{i'}^{j'}$ is a block for $(v,w)$ that is strictly contained in the maximum block, a contradiction.

So we have shown that $(\underline v, \underline w)_i^{j-1}$ is the maximum block of $(\underline v, \underline w)$. The result follows immediately by induction. \qed

\end{proof}
\color{black}

\subsection{Proof of main result.}\label{sec:ssyt_proof_main}

In this section we prove Theorem~\ref{thm:ssyt_fl_diag}. To do this we require the following construction. Let $(v,w) \in \MT_n$ be a pair of permutations and write $w_e = v_d = n$ for some integers $e$ and $d$. Assume that $e < d$. Define the permutations 
\[
w' = (w_1, \dots, w_{e-1}, w_{e+1}, w_e, w_{e+2}, \dots, w_n)\text{ and }v' = (v_1, \dots, v_{d-2}, v_d, v_{d-1}, v_{d+1}, \dots, v_n).
\]

\begin{lemma}\label{lem:inductive_vw}
For a given pair $(v,w) \in \MT_n$,  we have $(v', w), (v, w') \in \MT_n$.
\end{lemma}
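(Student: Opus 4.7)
The plan is to proceed directly from the inductive definition of $\MT_n$ in \eqref{eq:def_TN}. It suffices to check, separately for $(v,w')$ and $(v',w)$, that (a) the pair of ``underlined'' permutations lies in $\MT_{n-1}$ and (b) the pair itself is compatible in the sense of Definition~\ref{def:compatible}. The first observation, which makes (a) automatic, is that $\underline{w'}=\underline w$ and $\underline{v'}=\underline v$: indeed $w'$ differs from $w$ only by a transposition of $w_e=n$ with its right neighbour $w_{e+1}$, and analogously $v'$ differs from $v$ only by a transposition of $v_d=n$ with its left neighbour $v_{d-1}$, so deleting $n$ from $w'$ (resp.\ $v'$) produces exactly the same sequence as deleting $n$ from $w$ (resp.\ $v$). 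Hence $(\underline v,\underline{w'})=(\underline v,\underline w)\in\MT_{n-1}$ and $(\underline{v'},\underline w)=(\underline v,\underline w)\in\MT_{n-1}$ by hypothesis.

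What remains is the verification of compatibility, which I would do by a short case split. For $(v,w')$ the position of $n$ in $w'$ is $e+1$, while in $v$ it is $d$. If $d=e+1$ the two positions coincide, and compatibility holds by clause (i) of Definition~\ref{def:compatible}. Otherwise $e+1<d$, so one must check clause (ii): the strict decrease $n=w'_{e+1}>w'_{e+2}>\cdots>w'_d$ follows from $w_e>w_{e+1}>\cdots>w_d$ (the chain from compatibility of $(v,w)$) because $w'_{e+1}=w_e$ and $w'_{j}=w_j$ for $j\ge e+2$; the strict decrease $n=v_d>v_{d-1}>\cdots>v_{e+1}$ is immediate from the analogous chain $v_d>v_{d-1}>\cdots>v_e$ for $(v,w)$. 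The symmetric argument handles $(v',w)$: when $d-1=e$ clause (i) applies, and when $d-1>e$ the relevant chains are inherited from the chains for $(v,w)$ by truncating/restricting.

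The point requiring the most care is the pair of index inequalities $s'\le t$ and $t'\le s$ concerning the positions of $n-1$. I would argue as follows in the $(v,w')$ case with $e+1<d$. Let $w_{s'}=n-1$; then for $(v,w)$ we have $s'\le d$. The potentially problematic case is $s'=e+1$, but then $w_{e+1}=n-1$ gets swapped into position $e$ under $w\mapsto w'$, so the position of $n-1$ in $w'$ becomes $e\le d$. In all other cases the position of $n-1$ is unchanged and already $\le d$. For the other inequality, suppose for contradiction that the position $s$ of $n-1$ in $v$ equals $e$; combined with the chain $n=v_d>v_{d-1}>\cdots>v_e=n-1$ this forces $d-1=e$, contradicting $e+1<d$. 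Thus $s\ge e+1$, as required. The $(v',w)$ case runs along the same lines, using that the swap $v_{d-1}\leftrightarrow v_d$ only affects the position of $n-1$ when $v_{d-1}=n-1$, in which case that position becomes $d$, still satisfying $e\le d$.

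The main obstacle is purely bookkeeping: one has to track how the positions of both $n$ and $n-1$ move under the elementary transpositions $w\mapsto w'$ and $v\mapsto v'$, and reconcile this with the decreasing/increasing chains required by compatibility. Once the two underlying permutation pairs are identified and the trivial case $d=e+1$ (resp.\ $d-1=e$) is separated off, every remaining condition of Definition~\ref{def:compatible} is read off from the corresponding condition for $(v,w)$ by a straightforward index comparison, and no further induction on $n$ is needed.
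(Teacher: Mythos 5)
Your proof is correct and takes essentially the same approach as the paper: note that $\underline{v'}=\underline v$ and $\underline{w'}=\underline w$, reduce to verifying compatibility, split on whether the positions of $n$ coincide, and inherit the required chains and index inequalities from the compatibility of $(v,w)$. You spell out the bookkeeping for the positions of $n-1$ in more detail than the paper, which instead records the two summary inequalities $d'\ge d-1$ and $e'\le e+1$, but the substance is the same.
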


\begin{proof}
We note that $\underline v = \underline v'$ and $\underline w = \underline w'$ so it suffices to check that $(v',w)$ and $(v,w')$ are compatible pairs. Define $e', d' \in [n]$ such that $w_{e'} = v_{d'} = n-1$. Since $(v,w)$ are compatible, we have $d' \ge d - 1$ and $e' \le e + 1$. If $e = d-1$ then we have that the position of $n$ in each of the pairs $(v,w')$ and $(v',w)$ is the same and so each is a compatible pair. If $e < d - 1$ then we have that $e+1 \le d-1 \le d'$ and so $(v,w')$ is a compatible pair, and similarly we have $d-1 \ge e+1 \ge e'$ and so $(v', w)$ is a compatible pair.
\end{proof}

This construction is useful for our inductive argument in the proof of Theorem~\ref{thm:ssyt_fl_diag}. In particular, we will induct on the dimension of the Richardson variety $X_w^v$ which can be read combinatorially from the inversion numbers $N(v)$ and $N(w)$ of the permutations $v$ and $w$ respectively.  Note that $N(v') = N(v) + 1$ and $N(w') = N(w) - 1$. Therefore, $\dim(X_{w'}^v) = \dim(X_w^{v'}) = \dim(X_w^v) - 1$.

\begin{theorem}\label{thm:ssyt_fl_diag}
Let $d \ge 1$ be a natural number and $(v, w) \in \MT_n$ be a pair of permutations. Then the number of standard monomials for $X_w^v$ in degree $d$ is equal to $|SSYT_d(v,w)|$.
\end{theorem}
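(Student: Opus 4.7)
By Theorem~\ref{prop:Kim}, the number of standard monomials for $X_w^v$ in degree $d$ equals the Hilbert function of the coordinate ring of $X_w^v$ in degree $d$, so the theorem reduces to matching this count with $|SSYT_d(v,w)|$. One inclusion is automatic: if $T=[I_1\cdots I_d]$ is a standard monomial basis element for $X_w^v$, then $T$ does not vanish on $X_w^v$, which forces $v\le I_k\le w$ for every $k$ (otherwise $P_{I_k}$ would vanish on $X_w^v$), and hence $T\in SSYT_d(v,w)$. My plan is to establish the reverse inclusion when $(v,w)\in\MT_n$: every $T\in SSYT_d(v,w)$ satisfies the defining-chain conditions $w_d^-(T)\le w$ and $v\le w_1^+(T)$ of Theorem~\ref{prop:Kim}. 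Since standard monomials always lie in $SSYT_d(v,w)$, this set-level inclusion is equivalent to the cardinality statement.

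Because $w_-(T)$ is componentwise minimal among all defining chains for $T$, the bound $w_d^-(T)\le w$ follows from exhibiting \emph{any} defining chain $(u_1,\dots,u_d)$ for $T$ with $u_d\le w$; dually for $v\le w_1^+(T)$. I plan to construct such chains by induction on $n$, using the block decomposition of $(v,w)\in\MT_n$ developed in Section~\ref{sec:Tn_block_structure}. The base case $n=1$ is vacuous. For the inductive step, I delete the entry $n$ from every column of $T$ that contains it; a direct prefix check, combined with the compatibility of $(v,w)$, shows that the resulting tableau $\underline T$ lies in $SSYT_d(\underline v,\underline w)$. Applying the inductive hypothesis to $(\underline v,\underline w)\in\MT_{n-1}$ yields a defining chain $(\bar u_1,\dots,\bar u_d)$ for $\underline T$ with $\bar u_d\le\underline w$, and each $u_k$ is produced by reinserting $n$ into $\bar u_k$ at a position chosen inside the maximum block $(v,w)_i^j$.

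The specific choice of insertion position is dictated by Proposition~\ref{prop:block_max_order}: the monotone strips $v_i<\cdots<v_d=n$ and $n=w_e>\cdots>w_j$ supply a canonical slot for $n$ that preserves both Bruhat monotonicity along the chain and the bound $u_k\le w$. Verifying these Bruhat inequalities then reduces, via the partition-style formulas of Proposition~\ref{prop:w_three_parts} and the prefix-comparison criterion of Proposition~\ref{prop:3_part_perm_up}, to the two-column situation already handled in Section~\ref{sec:prelim_tableaux}. A symmetric construction produces the chain witnessing $v\le w_1^+(T)$. The main obstacle is controlling how the reinserted $n$ affects intermediate Bruhat prefixes of $u_k$ where the comparison with $w$ is tight; the decisive structural inputs that overcome this are the non-crossing property of blocks (Proposition~\ref{prop:unified}(b)), which guarantees that insertion positions across different columns of $T$ are mutually compatible, and Lemma~\ref{lem:inductive_vw}, which in residual cases supplies pairs $(v,w'), (v',w)\in\MT_n$ with strictly smaller Richardson variety dimension on which to recurse.
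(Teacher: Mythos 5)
Your proposal takes a genuinely different route from the paper. The paper first reduces to $d=2$ (justified by quadratic generation of $I(X_w^v)$) and then computes $w_2^-$ and $w_1^+$ explicitly via Proposition~\ref{prop:w_three_parts}, proceeding by double induction on $n$ and $\dim X_w^v$ with a case split on the bands $\{1,\dots,s\},\{s+1,\dots,t\},\{t+1,\dots,n\}$ containing the positions $e,d$ of $n$ in $w,v$; the harder cases rely on the auxiliary permutations $v',w'$ and the Claim that $v'\le I$ or $I\le w'$. You instead work with general $d$ directly and exploit the nice observation that $w_d^-\le w$ follows from exhibiting \emph{any} defining chain ending $\le w$ (and dually). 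This is a real simplification in principle: it avoids the $d=2$ reduction and the explicit formulas for $w_d^-$, $w_1^+$.

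However, the heart of your argument --- deleting $n$ from $T$, applying the inductive hypothesis, and reinserting $n$ --- has two genuine gaps. First, the reinsertion position for $u_k$ is not supplied by the maximum block: it is forced by the column data, namely $n$ must sit in position $\le|I_k|$ of $u_k$ when $n\in I_k$ and in position $>|I_k|$ otherwise, so that $u_k([|I_k|])=I_k$. One must then choose these positions to be weakly decreasing (to keep the chain Bruhat-monotone after insertion) and simultaneously arrange $p_d\ge e$ so that $u_d\le w$ survives; this last constraint is where compatibility and the block structure actually enter, and it is not addressed at all in the plan. Second, the assertion that $\underline T\in SSYT_d(\underline v,\underline w)$ via ``a direct prefix check'' is doing heavy lifting: when $e\le|I_k|<d$ and $n\notin I_k$, showing $\underline I_k\le\underline w$ requires comparing $I_k$ against $w([|I_k|+1])\setminus\{n\}$, and this hinges on the monotone strips $w_e>\cdots>w_j$ and Proposition~\ref{prop:unified}(a), not merely on the compatibility inequalities. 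In fact for pairs $(v,w)\notin\MT_{n+1}$ this step can fail (e.g.\ $v=(1,2,3,5,4)$, $w=(5,4,2,1,3)$, $I=\{3,4\}$), so the compatibility hypothesis must be invoked explicitly and carefully here; the paper sidesteps exactly this obstacle by not deleting $n$ uniformly but instead shifting it one position via $v',w'$ when $e,d$ straddle the column-size bands. Until the reinsertion positions and the $\underline T\in SSYT_d(\underline v,\underline w)$ claim are pinned down, the proposal is a plausible plan rather than a proof.
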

\begin{proof}
Let $(v,w) \in \MT_n$ be a pair of permutations. We note that for $d = 1$ the result holds immediately. Since the ideal of the Richardson variety $X_w^v$ is generated by homogeneous quadrics, it suffices to show that any semi-standard Young tableau $T$ with two columns $I, J$ such that $v \le I, J \le w$ is standard for the Richardson variety $X_w^v$. We do this by showing that $v \le w_1^+ $ and $w_2^- \le w$.

We note that if $s = t$ then the defining permutations have a particularly simple description, i.e.~$w_1^+ = (I^\downarrow, \downarrow) \ge v$ and $w_2^- = (J^\uparrow, \uparrow) \le w$ and the result immediately follows. For the remaining cases, the proof proceeds by induction on $n$. If $n = 1$ then the result is trivial. For each $n > 1$ we proceed by induction on the dimension of the Richardson variety $X_w^v$. If the dimension is zero then we have that $v = w$ and the result is trivial. Let us assume that $v < w$.  We write
\[
    I = \{i_1 < \dots < i_t \}, \quad
    J = \{j_1 < \dots < j_s \}, \quad
    w = (w_1, \dots, w_n), \quad
    v = (v_1, \dots, v_n).
\]
Let $e, d \in [n]$ be integers such that $w_e = v_d = n$.

We proceed by taking cases on $e$ and $d$.

\medskip

\textbf{Case 1.} Assume that both $e$ and $d$ lie in one of the sets: $\{ 1, \dots, s\}, \{s+1, \dots, t \}$ or $\{t+1,\dots, n \}$. We define the tableau $T'$ with columns $I', J'$ as follows.
\begin{itemize}
    \item If $e,d \in \{ 1, \dots, s\}$ then define $I' = I \backslash n$ and $J' = J \backslash n$.
    \item If $e,d \in \{s+1, \dots, t \}$ then define $I' = I \backslash n$ and $J' = J$.
    \item If $e,d \in \{t+1,\dots, n \}$ then define $I' = I$ and $J' = J$.
\end{itemize}
By construction we have $\underline v \le I', J' \le \underline w$. Since $(\underline v, \underline w) \in \MT_n$, by induction on $n$ we have $T'$ is standard for $X_{\underline w}^{\underline v}$. We write $\underline w_- = (\underline w_1^-, \underline w_2^-)$ and $\underline w_+ = (\underline w_1^+, \underline w_2^+)$ for the minimum and maximum defining sequences for $T'$ in $S_{n-1}$. Since $T'$ is standard for $X_{\underline w}^{\underline v}$ we have $\underline w_2^- \le \underline w $ and $\underline v \le \underline w_1^+$. 

By Proposition~\ref{prop:w_three_parts} we have that $\underline w_2^- = (J'^\uparrow, I_-^\uparrow, \uparrow)$ and $\underline w_1^+ = ((I' \backslash I_+)^\downarrow, I_+^\downarrow, \downarrow)$ in $S_{n-1}$ for some subsets $I_+, I_- \subseteq I'$. It follows by the same proposition that:
\begin{itemize}
    \item If $e,d \in \{ 1, \dots, s\}$ then 
    $w_2^- = (J' \cup \{n\}^\uparrow, I_-^\uparrow, \uparrow)$ and 
    $w_1^+ = ((I' \backslash I_+) \cup \{n\}^\downarrow, I_+^\downarrow, \downarrow)$,
    \item If $e,d \in \{s+1, \dots, t \}$ then 
    $w_2^- = (J^\uparrow, I_- \cup \{n\}^\uparrow, \uparrow)$ and 
    $w_1^+ = ((I' \backslash I_+)^\downarrow, I_+\cup \{n\}^\downarrow, \downarrow)$,
    \item If $e,d \in \{t+1,\dots, n \}$ then $w_2^- = (J^\uparrow, I_-^\uparrow, \uparrow)$ and $w_1^+ = ((I' \backslash I_+)^\downarrow, I_+^\downarrow, \downarrow)$.
\end{itemize}
Since $\underline w_2^- \le \underline w$ and $\underline v \le \underline w_1^+$, it follows that $w_2^- \le w$ and $v \le w_1^+$.

\medskip

For the remaining cases note that we have $e < d$. So we recall the permutations 
\[
    w' = (w_1, \dots, w_{e-1}, w_{e+1}, w_e, w_{e+2}, \dots, w_n) 
    \textrm{ and }
    v' = (v_1, \dots, v_{d-2}, v_d, v_{d-1}, v_{d+1}, \dots, v_n).
\]
Note that by Lemma~\ref{lem:inductive_vw} we have that $(v',w), (v,w') \in \MT_n$.

\medskip

\textbf{Case 2.} Assume $s \le e \le t$ and $t+1 \le d$. If $d > t+1$ then it follows that $v' \le I, J \le w$ and so by induction we have $w_2^- \le w$ and $v \le v' \le w_1^+$. Therefore, we may assume that $d = t+1$. Similarly, if $e < t$ then it follows that $v \le I, J \le w'$ and so by induction we have $w_2^- \le w' \le w$ and $v \le w_1^+$. So we may assume that $e = t$.

\medskip

\noindent\textbf{Claim.} \textit{Either $v' \le I$ or $I \le w'$.}\label{claim:compatible_v_w}

To prove the claim, we proceed by taking cases on $I$, either $n \in I$ or $n \notin I$.

\textbf{Case i.} Assume $n \in I$. Since $w_t = n$, it follows that $I \nleq w'$ and so we will show that $v' \le I$. 
Let $(v,w)_i^j$ be the maximum block. Since $e = t$ and $d = t+1$ we have that $i \le t$ and $j \ge t+1$. By Proposition~\ref{prop:block_max_order} we have that $v_i < \dots < v_t < v_{t+1} = n$. By Proposition~\ref{prop:max_block_elements}, \color{black} the elements appearing in the maximum block are precisely $\{v_i, v_{i+1}, \dots, v_j \} = \{v_i, v_i +1, \dots, n \}$. Therefore, for all $k < i$ we have $v_k < v_i$ and so $v_t = \max\{v_1, \dots, v_t \}$. Since $n \in I$, it follows easily that $v' \le I$.

\medskip

\textbf{Case ii.} Assume $n \notin I$. Since $v_{t+1} = n$, it follows that $v' \nleq I$ and so we will show that $I \le w'$. Similarly to the above case, we consider the maximum block $(v,w)_i^j$. By Proposition~\ref{prop:block_max_order} we have that $n = w_t > \dots > v_j$. By Proposition~\ref{prop:max_block_elements}, \color{black} the elements appearing in the maximum block are precisely $\{w_i, w_{i+1}, \dots, w_j \} = \{w_j, w_j +1, \dots, n \}$. Therefore, for all $k > j$ we have that $w_k < w_j$ and so $w_{t+1} = \max\{w_{t+1}, \dots, w_n \}$. Since $n \notin I$, it follows easily that $I \le w'$. And so we proved the claim.

If $v' \le I$ then we have $v' \le I, J \le w$ and so by induction we have $T$ is standard for $X_{w}^{v'}$. Hence, $v \le v' \le w_1^+$ and $w_2^- \le w$. Therefore, $T$ is standard for $X_w^v$. On the other hand, if $I \le w'$ then we have $v \le I, J \le w'$ and so by induction we have $T$ is standard for $X_{w'}^v$. Hence, $v \le w_1^+$ and $w_2^- \le w' \le w$. Therefore, $T$ is standard for $X_w^v$.

\medskip

\textbf{Case 3.} Assume $e \le s$ and $s+1 \le d \le t$. This case is identical to Case 2 by considering the subset $J$ instead of $I$.

\medskip

\textbf{Case 4.} Assume $e \le s$ and $t+1 \le d$. We can assume as, similarly to Case 2, that $d = t+1$ and $e = s$. Note that $s < t$ and so $v \le v' \le w' \le w$. If either $v' \le I $ or $J \le w'$ then we can deduce the result by induction, so we will assume that both conditions do not hold. Equivalently, we will assume that $n \in J$ and $n \notin I$.
Let $(v,w)_i^j$ be the maximum block of $(v,w)$. By Proposition~\ref{prop:block_max_order} we have $v_i < \dots < v_d$ and $w_e > \dots > w_j$ and by Proposition~\ref{prop:unified}(a) for all $k < i$ and $k > j$ we have $v_k < v_i$ and $w_k < w_j$. Let us consider the size of the maximum block. Since $n \in \{w_1, \dots, w_s \}$ we have that $i \le s$. Since $n \in \{v_{t+1}, \dots, v_n \}$ we have that $j \ge t+1$. In particular, the indices of the maximum block span $\{s, s+1, \dots, t+1 \}$.

We now show that $v \le w_1^+$ and $w_2^- \le w$. Recall by Proposition~\ref{prop:w_three_parts} that $w_1^+ = (I \backslash \tilde I^\downarrow, \tilde I^\downarrow, \downarrow)$ and $w_2^- = (J^\uparrow,  I'^\uparrow, \uparrow)$ for some subsets $\tilde I$ and $I'$ of $I$. Since $v_s < \dots < v_t$ are the largest elements of $\{v_1, \dots, v_t \}$ and $v \le I$, it follows that $v \le w_1^+$.
Similarly, since $w_s > \dots > w_{t+1}$ are the largest elements in $\{w_1, \dots, w_t\}$ and $I, J \le w$, it follows that $w_2^- \le w$.
\end{proof}

\begin{remark}\label{rmk:ssyt_corollaries_schubert_op_schubert}
Schubert and opposite Schubert varieties are special examples of Richardson varieties. For these cases Theorem~\ref{thm:ssyt_fl_diag} has a particularly simple combinatorial description. A Schubert variety is a Richardson varieties $X_{w}^v$ such that $v = id$. It is easy to show that $(id, w) \in \MT_n$ if and only if $w$ is a 
$312$-avoiding
permutation. On the other hand, opposite Schubert varieties are Richardson varieties $X_w^v$ such that $w = w_0 = (n,n-1, \dots, 1)$. In this case $(v, w_0) \in \MT_n$ if and only if $v$ is 
$213$-avoiding.
\end{remark}

\section{Monomial-free ideals 
}\label{sec:mon_free_ideals}

We recall the definition of the ideal $\init(I_n)|_{T_w^v}$ from Sections~\ref{subsec:flag} and \ref{subsec:X} and the collection of pairs of permutations $\MT_n \subseteq S_n \times S_n$ from 
\eqref{eq:def_TN}. Our main result in this section is the following which gives a complete characterisation for monomial-free ideals of form $\init(I_n)|_{T_w^v}$.

\begin{theorem}
\label{thm:toric_fl_rich_diag}
The ideal $\init(I_n)|_{T_w^v}$ is monomial-free if and only if $(v,w) \in \MT_n$.
\end{theorem}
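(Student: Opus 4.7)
The plan is to handle both directions by induction on $n$, leveraging the inductive definition of $\MT_n$ (Definition~\ref{def:compatible}) and the block-structure results from Section~\ref{sec:Tn_block_structure}. By Theorem~\ref{thm:Pure}, $\init(I_n) = \ker \phi_n$ is generated by quadratic binomials of the form $P_I P_J - P_{I'} P_{J'}$, where the two pairs give the same diagonal monomial under $\phi_n$. By Lemma~\ref{lem:elim_ideal_gen_set}, restriction to $T_w^v$ replaces each such generator by its image under $P_K \mapsto 0$ for $K \notin T_w^v$: the output is a binomial when $I,J,I',J' \in T_w^v$, zero when both pairs meet $S_w^v$, and a monomial precisely when exactly one of the two pairs lies inside $T_w^v$. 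Hence the content of the theorem is a lattice-closure condition on $T_w^v$: every Hibi-type sorting relation with inputs in $T_w^v$ must also have its outputs in $T_w^v$.

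For the ($\Leftarrow$) direction, assume $(v,w)\in \MT_n$. I would prove this closure property by induction on $n$, splitting into cases according to whether the letter $n$ appears in $I,J,I',J'$. The case in which $n$ appears in none of them reduces directly to $(\underline v,\underline w)\in \MT_{n-1}$, mirroring the persistence clause of the block definition. The cases involving $n$ are controlled by the position of $n$ inside the maximum block of $(v,w)$: using Proposition~\ref{prop:unified} and Proposition~\ref{prop:block_max_order}, the strict monotonicity of $v$ and $w$ on the indices of the maximum block, together with the compatibility inequalities $s' \le t$ and $t' \le s$, force the sorted pair $(I',J')$ to inherit $v \le I', J' \le w$ from $(I,J)$. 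Once this closure is established, every restricted generator is a binomial or zero, hence lies in $\ker(\phi_n|_w^v)$; since the latter is a prime toric ideal, $\init(I_n)|_{T_w^v}$ is then automatically monomial-free.

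For the ($\Rightarrow$) direction, assume $(v,w)\notin \MT_n$. Either $(\underline v,\underline w)\notin \MT_{n-1}$, in which case the inductive hypothesis produces a witness binomial in $\init(I_{n-1})$ that lifts to a monomial in $\init(I_n)|_{T_w^v}$ by inserting $n$ into the appropriate position; or $(v,w)$ fails the compatibility criterion, so one of $s' \le t$, $t' \le s$, $w_{t'}>\cdots>w_t$, $v_t>\cdots>v_{t'}$ breaks. In each failure mode I would exhibit explicitly two subsets $I, J \in T_w^v$ whose Hibi sorting partner $(I',J')$ has $I' \notin T_w^v$ or $J' \notin T_w^v$; the corresponding binomial in $\init(I_n)$ then restricts to a nonzero monomial in $\init(I_n)|_{T_w^v}$. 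The main obstacle is the ($\Leftarrow$) direction: establishing the lattice-closure of $T_w^v$ requires careful bookkeeping of where $n$ sits inside $I,J,I',J'$ and how it interacts with the positions $t,t'$ and the block structure. The ($\Rightarrow$) direction is essentially a finite case analysis, but selecting a clean uniform family of witnesses — ideally coming from short transpositions near the positions of $n$ and $n-1$ in $v$ and $w$ — will take some combinatorial care.
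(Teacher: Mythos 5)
Your high-level plan matches the paper's: you correctly reformulate monomial-freeness as a closure condition on $T_w^v$ with respect to the Hibi sorting relations generating $\init(I_n)$ (via Theorem~\ref{thm:Pure} and Lemma~\ref{lem:elim_ideal_gen_set}), and you run an induction on $n$ that mirrors the inductive definition of $\MT_n$. Your $(\Rightarrow)$ direction is the contrapositive of the paper's Lemmas~\ref{non toric} and~\ref{comp}, where the witnesses you mention are indeed built from transpositions near the positions of $n$ and $n-1$ in $v$ and $w$; this part of your outline is structurally sound, with the real work hidden in the case analysis you acknowledge.

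The gap is in the $(\Leftarrow)$ direction (the paper's Lemma~\ref{lem:compatible_is_monomial_free}). You claim that once $n$ appears in some of $I,J,I',J'$, the monotonicity from Proposition~\ref{prop:block_max_order}, Proposition~\ref{prop:unified}, and the compatibility inequalities directly ``force'' $v \le I', J' \le w$. In the paper that direct block-structure argument is only used in a single boundary subcase (when $n$ sits precisely at the column boundaries, $t' = |J|$ and $t = |I|+1$). The bulk of the mixed cases instead rely on a crucial ingredient that your proposal omits: the swap operations $v \mapsto v'$ and $w \mapsto w'$ of Lemma~\ref{lem:inductive_vw}, which stay inside $\MT_n$ and strictly decrease the dimension of the Richardson variety, giving a secondary induction on $\dim X_w^v$. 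Without this inner induction, the cases where $t$ and $t'$ straddle the segments $\{1,\dots,|J|\}$, $\{|J|+1,\dots,|I|\}$, $\{|I|+1,\dots,n\}$ do not reduce cleanly, and the purely block-theoretic forcing you sketch does not obviously close them. So your outline identifies the right ingredients but is missing the pivot that makes the hardest direction go through; you would need either to import the $v',w'$ reduction or to supply a substantially sharper direct combinatorial argument for the mixed-segment cases.
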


\begin{proof}
The proof follows directly from Lemmas~\ref{non toric}, \ref{comp} and \ref{lem:compatible_is_monomial_free}. In particular, Lemmas~\ref{non toric} and \ref{comp} show that if $\init(I_n)|_{T_w^v}$ is monomial-free then $\init(I_{n-1})|_{T_{\underline w}^{\underline v}}$ is monomial-free and $(v,w)$ is a compatible pair. And so, by induction on $n$, we have that if $\init(I_n)|_{T_w^v}$ is monomial-free then $(v,w) \in \MT_n$. On the other hand, Lemma~\ref{lem:compatible_is_monomial_free} shows that if $(v,w) \in \MT_{n}$ then $\init(I_n)|_{T_w^v}$ is monomial-free.
\end{proof}

\begin{example}\label{ex:non-compatible}
Let $v = (1,3,2)$ and $w = (3,1,2)$, which are non-compatible permutations. Note that the ideal $\init(I_2)|_{T_{\underline w}^{\underline v}} = 0$, in particular it is monomial-free. We also have $\init(I_3) = \langle P_{13}P_2 - P_{23}P_1 \rangle$, Since $T_w^v = \{1,2,3,13\}$, it follows that the ideal $\init(I_3)|_{T_w^v} = \langle P_{13} P_2\rangle$ contains a monomial. This monomial arises from the generator of $\init(I_3)$, where $P_{23}P_{1}$ vanishes in $\init(I_3)|_{T_w^v}$.
\end{example}

We now proceed to prove the lemmas used in the proof of Theorem~\ref{thm:toric_fl_rich_diag}.
We will first show, in Lemma~\ref{non toric}, that
\[
\init(I_{n+1})|_{T_w^v} \text{ monomial-free }\implies \init(I_n)|_{T^{\underline v}_{\underline w}}\text{ monomial-free.}
\]
However, the converse does not hold, see Example~\ref{ex:non-compatible}. We will show that compatibility is an essential ingredient in showing that $\init(I_{n+1})|_{T_w^v}$ is monomial-free. 
In Lemma~\ref{lem:compatible_is_monomial_free}, we will show that the converse holds with the added assumption of compatibility
\[
\init(I_n)|_{T^{\underline v}_{\underline w}}\text{ monomial-free and }(v,w) \text{ compatible }\implies \init(I_{n+1})|_{T_w^v} \text{ monomial-free.}
\]
\begin{lemma}\label{non toric}

Let $v, w \in S_{n+1}$ with $\underline v \le \underline w$. If $\init(I_{n+1})|_{T_w^v}$ is monomial-free 
then $\init(I_n)|_{T^{\underline v}_{\underline w}}$ is monomial-free.
\end{lemma}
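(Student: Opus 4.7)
I will prove the contrapositive. Suppose $\init(I_n)|_{T^{\underline v}_{\underline w}}$ contains a monomial; my goal is to produce a monomial in $\init(I_{n+1})|_{T_w^v}$. By Lemma~\ref{lem:elim_ideal_gen_set}(ii), such a witness comes from a quadratic generator $P_\alpha P_\beta - P_\gamma P_\delta$ of $\init(I_n)$ with $\alpha, \beta \in T^{\underline v}_{\underline w}$ and, after relabelling, $\gamma \notin T^{\underline v}_{\underline w}$. Since $\init(I_n) = \ker \phi_n$ by Theorem~\ref{thm:Pure} and the diagonal monomial $\phi_n(P_J)$ uses the variable $x_{i,j_i}$ exactly once in each row $i \le |J|$, comparing the row-wise multiplicities in the equality $\phi_n(P_\alpha P_\beta) = \phi_n(P_\gamma P_\delta)$ forces the column-size multiset identity $\{|\alpha|, |\beta|\} = \{|\gamma|, |\delta|\}$.

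Write $v_t = w_{t'} = n+1$; since the Richardson setup forces $v \le w$, we have $t' \le t$. I will use a two-parameter family of lifts: for each threshold $s \in \{t, t'\}$ define $\widetilde J^{(s)} = J$ if $|J| < s$ and $\widetilde J^{(s)} = J \cup \{n+1\}$ otherwise. Using $\phi_{n+1}(P_{J \cup \{n+1\}}) = \phi_n(P_J)\cdot x_{|J|+1,\, n+1}$ together with the size-multiset equality above, the extra factors on each side cancel and the lifted relation
\[
P_{\widetilde \alpha^{(s)}} P_{\widetilde \beta^{(s)}} - P_{\widetilde \gamma^{(s)}} P_{\widetilde \delta^{(s)}}
\]
lies in $\init(I_{n+1})$ for either choice of $s$.

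A case analysis on $|J|$ versus $s$ then confirms $\widetilde J^{(s)} \in T_w^v$ for every $J \in T^{\underline v}_{\underline w}$, regardless of the threshold, via two reductions: first, for any $J \subseteq [n]$, $J \le \underline w$ implies $J \le w$, because passing from $\underline w$ to $w$ inserts $n+1$, which only increases the sorted first-$k$ entries pointwise; second, whenever $|J| \ge s \ge t'$, the conditions $v \le J \cup \{n+1\}$ and $J \cup \{n+1\} \le w$ simplify to $\underline v \le J$ and $J \le \underline w$, respectively. In particular $\widetilde \alpha^{(s)}, \widetilde \beta^{(s)} \in T_w^v$ for both thresholds.

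The crux is to choose the threshold so that $\widetilde \gamma^{(s)} \notin T_w^v$, and this is where the subtlety lies. If $\underline v \not\le \gamma$, take $s = t$: the reduction above gives $v \not\le \widetilde \gamma^{(t)}$. If instead $\gamma \not\le \underline w$, switch to $s = t'$ so that $\widetilde \gamma^{(t')} = \gamma \cup \{n+1\}$; then $\widetilde \gamma^{(t')} \le w$ reduces to $\gamma \le \underline w$, which fails. The main obstacle is precisely the middle range $t' \le |\gamma| < t$ with only the upper-bound failure $\gamma \not\le \underline w$: here the naive identity lift could still satisfy $\gamma \le w$ because inserting $n+1$ into $\underline w$ relaxes the upper-bound comparison, so switching the threshold to $t'$ and augmenting $\gamma$ by $n+1$ is essential to restore the tight equivalence. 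Either way, $P_{\widetilde \alpha^{(s)}} P_{\widetilde \beta^{(s)}}$ is the required monomial in $\init(I_{n+1})|_{T_w^v}$, completing the contrapositive.
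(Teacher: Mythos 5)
Your proof is correct and follows the paper's overall strategy: prove the contrapositive by lifting a monomial-producing quadratic binomial $P_\alpha P_\beta - P_\gamma P_\delta \in \init(I_n)$ to $\init(I_{n+1})$ by adjoining $n+1$ to those sets whose size is at least a chosen threshold. The substantive difference is how the threshold is chosen. The paper always lifts with the fixed threshold $t'$ (so a set gets $n+1$ appended exactly when its size is $\ge t'$), while you pick $s\in\{t,t'\}$ according to which side of the Bruhat comparison fails for the excluded column $\gamma$: $s=t$ for a lower-bound failure $\underline v\not\le\gamma$, and $s=t'$ for an upper-bound failure $\gamma\not\le\underline w$. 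This flexibility is not merely a stylistic variant; it repairs a gap in the paper's Case 3 (the case $t'\le |J|$). There the paper needs $\underline v\not\le J'$ to imply $v\not\le J'\cup\{n+1\}$, but this fails in general when $|J'|+1<t$: for example take $v=(3,1,2,6,4,5)$, $w=(6,4,3,2,1,5)$ in $S_6$ (so $t=4$, $t'=1$) and the binomial $P_{24}P_3-P_{34}P_2$; then $\underline v\not\le\{2\}$, yet the paper's lift produces $\tilde J'=\{2,6\}$ with $v\le\{2,6\}\le w$ and $\tilde I'=\{3,4,6\}\in T^v_w$, so the lifted polynomial is a binomial rather than a monomial. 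Your choice $s=t=4$ (no lift, since all columns have size $<4$) correctly yields the monomial $P_{24}P_3\in\init(I_6)|_{T_w^v}$. Two small imprecisions in your writeup, neither affecting correctness: when $s=t'$ and $|\gamma|<t'$, you have $\widetilde\gamma^{(t')}=\gamma$ rather than $\gamma\cup\{n+1\}$, but the conclusion $\gamma\not\le w$ still follows since $w$ and $\underline w$ agree in their first $|\gamma|$ entries; and the phrase ``simplify to'' overstates the reduction for the lower-bound comparison when $s=t'$ and $|\gamma|+1<t$ (only the forward implication $\underline v\le J\Rightarrow v\le J\cup\{n+1\}$ holds there), but that is the only direction you use for $\alpha,\beta$.
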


\begin{proof} 
Suppose that $F_n|^{\underline v}_{\underline w}$ contains a 
monomial $P_IP_J$ 
which arises from the binomial $P_IP_J-P_{I'}P_{J'}$ in $\init_{W_D}(F_n)$. 
We construct a monomial in $F_{n+1}|_w^v$ as follows.  
Assume that  $|I|=|I'|\geq |J|=|J'|$.
Let $1\leq t'\leq t\leq n+1$ with $v_t=w_{t'}=n+1$.  We take cases on $t$,  $t'$, $|I|$, and $|J|$. 

\medskip

{\bf Case 1.} Assume that $t'>|I|$. Then $P_IP_J-P_{I'}P_{J'}$ is a binomial in  $\init_{W_D}(F_{n+1})$. It is clear that $P_{I}P_{J}$ does not vanish in $F_{n+1}|_w^v$ and $P_{I'}P_{J'}$ vanishes in $F_{n+1}|_w^v$. Hence, $P_IP_J$ is a monomial in $F_{n+1}|_w^v$.  

\medskip

\textbf{Case 2.} Assume $|J| < t' \le |I| < t$. Note that $v \le I, J, I \cup \{n+1\} $ and $I, J, I \cup \{n+1 \} \le w$. So, $P_I, P_J$ and $P_{I \cup \{n+1\}}$ do not vanish in $F_{n+1}|_w^v$. Since $P_{I'} P_{J'}$ vanishes in $F_n|_{\underline w}^{\underline v}$, we may proceed by taking cases on which of the following hold: $\underline v \nleq I', \underline v \nleq J', \underline w \ngeq I'$ or $\underline w \ngeq J'$. For each case we define sets $\tilde I, \tilde I', \tilde J, \tilde J'$ such that: $P_{\tilde I}P_{\tilde J} - P_{\tilde I'}P_{\tilde J'}$ is a binomial in $\init_{W_D}(F_{n+1})$, $P_{\tilde I}P_{\tilde J}$ does not vanish, and $P_{\tilde I'}P_{\tilde J'}$ vanishes in $F_{n+1}|_w^v$. It follows that $F_{n+1}|_w^v$ contains the monomial $P_{\tilde I}P_{\tilde J}$.

\smallskip

\textbf{Case 2.1.} If either $\underline v \nleq I', \underline v \nleq J'$ or $\underline w \ngeq J'$, then define $\tilde I = I, \tilde I' = I, \tilde J = J$ and $\tilde J' = J'$. So, $P_{\tilde I}P_{\tilde J}$ does not vanish in $F_{n+1}|_w^v$. However, either $v \nleq \tilde I', v \nleq \tilde J'$ or $w \ngeq \tilde J'$ holds, respectively. And so $P_{\tilde I'}P_{\tilde J'}$ vanishes in $F_{n+1}|_w^v$.

\smallskip

\textbf{Case 2.2.} If $\underline w \ngeq I'$, then define $\tilde I = I \cup \{n+1 \}, \tilde I' = I \cup \{n+1 \}, \tilde J = J$ and $\tilde J' = J'$. So, $P_{\tilde I}P_{\tilde J}$ does not vanish in $F_{n+1}|_w^v$. However, we have that $w \ngeq \tilde I'$. Hence, $P_{\tilde I'}$ vanishes in $F_{n+1}|_w^v$.

\medskip

\textbf{Case 3.} Assume that $|J| < t' \le t \le |I|$. Define $\tilde I = I \cup \{n+1\}, \tilde J = J, \tilde I' = I' \cup \{n+1\}$, and $\tilde J' = J'$. Observe that $P_{\tilde I}P_{\tilde J} - P_{\tilde I'} P_{\tilde J'}$ is a binomial in $\init_{W_D}(F_{n+1})$. Since $v \le \tilde I, \tilde J \le w$, we have that $P_{\tilde I} P_{\tilde J}$ does not vanish in $F_{n+1}|_w^v$. Since $P_{I'} P_{J'}$ vanishes in $F_{n}|_{\underline w}^{\underline v}$, we have that at least one of $\underline v \nleq I', \underline v \nleq J', \underline w \ngeq I'$ or $\underline w \ngeq J'$ holds. It follows that at least one of $\underline v \nleq I', \underline v \nleq J', \underline w \ngeq I'$ or $\underline w \ngeq J'$ holds respectively. And so $P_{\tilde I'}P_{\tilde J'}$ vanishes in $F_{n+1}|_w^v$. Therefore $P_{\tilde I}P_{\tilde J}$ is a monomial in $F_{n+1}|_w^v$.

\medskip

{\bf Case 4.} 
Assume $t'\leq |J| \le |I| < t$. Note that $v \le I, J, I \cup \{ n+1\}, J \cup \{n+1\}$ and $I, J, I \cup \{n+1 \}, J \cup \{n+1\} \le w$. So, $P_I, P_J, P_{I \cup \{n+1\}}$ and $P_{J \cup \{n+1\}}$ do not vanish in $F_{n+1}|_w^v$. Since $P_{I'} P_{J'}$ vanishes in $F_n|_{\underline w}^{\underline v}$, we may proceed similarly to Case~2 by taking cases on which of the following hold: $\underline v \nleq I', \underline v \nleq J', \underline w \ngeq I'$ or $\underline w \ngeq J'$. 

\smallskip

\textbf{Case 4.1.} If $\underline v \nleq I'$ or $\underline v \nleq J'$, then define $\tilde I = I, \tilde J = J, \tilde I' = I'$, and $\tilde J' = J'$. It follows that $v \nleq \tilde I'$ or $v \nleq \tilde J'$ holds, respectively. Hence, $P_{\tilde I} P_{\tilde J}$ is a monomial in $F_{n+1}|_w^v$.

\smallskip

\textbf{Case 4.2.} If $\underline w \ngeq I'$ or $\underline w \ngeq J'$, then define $\tilde I = I \cup \{n+1\}, \tilde J = J\cup \{n+1\}, \tilde I' = I'\cup \{n+1\}$, and $\tilde J' = J'\cup \{n+1\}$. It follows that $w \ngeq \tilde I'$ or $w \ngeq \tilde J'$ holds, respectively. Hence, $P_{\tilde I} P_{\tilde J}$ is a monomial in $F_{n+1}|_w^v$.

\medskip

\textbf{Case 5.}
Assume that $t' \leq |J| < t \le |I|$. Note that $v \le I \cup \{ n+1\}, J, J \cup \{n+1\}$ and $I \cup \{n+1 \}, J, J \cup \{n+1\} \le w$. So, $P_{I \cup \{n+1\}}, P_J$ and $P_{J \cup \{n+1\}}$ do not vanish in $F_{n+1}|_w^v$. Since $P_{I'} P_{J'}$ vanishes in $F_n|_{\underline w}^{\underline v}$, we may proceed similarly to Case~2 by taking cases on which of the following hold: $\underline v \nleq I', \underline v \nleq J', \underline w \ngeq I'$ or $\underline w \ngeq J'$. 

\smallskip

\textbf{Case 5.1.} If either $\underline v \nleq I', \underline v \nleq J'$ or $\underline w \ngeq I'$, then define $\tilde I = I \cup \{n+1 \}, \tilde J = J, \tilde I' = I' \cup \{n+1 \}$, and $\tilde J' = J'$. It follows that either $v \nleq \tilde I', v \nleq \tilde J'$ or $w \ngeq \tilde I'$ holds, respectively. Hence, $P_{\tilde I} P_{\tilde J}$ is a monomial in $F_{n+1}|_w^v$.

\smallskip

\textbf{Case 5.2.} If $\underline w \ngeq J'$, then define $\tilde I = I \cup \{n+1 \}, \tilde J = J \cup \{n+1\}, \tilde I' = I' \cup \{n+1 \}$, and $\tilde J' = J'\cup \{n+1\}$. It follows that $w \ngeq \tilde J'$, hence, $P_{\tilde I} P_{\tilde J}$ is a monomial in $F_{n+1}|_w^v$.

\medskip

\textbf{Case 6.} Assume that $t \le |J|$. Define
\[
\tilde I=I\cup \{n+1\}, \ 
\tilde J=J\cup \{n+1\}, \ 
\tilde I'=I'\cup \{n+1\} \text{ and }
\tilde J'=J'\cup \{n+1\}.
\]
Then $P_{\tilde I}P_{\tilde J}-P_{\tilde I'}P_{\tilde J'}$ is a binomial in  $\init_{W_D}(F_{n+1}).$ Since $\underline v \leq I, J\leq \underline w$, then $v\leq \tilde I, \tilde J\leq w$ so $P_{\tilde I}P_{\tilde J}$ does not vanish in $F_{n+1}|_w^v$. Since $P_{I'}P_{J'}$ vanishes in $F_{n}|_{\underline w}^{\underline v}$, we have that at least one of $\underline v \nleq I', \underline v \nleq J', \underline w \ngeq I'$ or $\underline w \ngeq J'$ holds. It follows that at least one of $\underline v \nleq I', \underline v \nleq J', \underline w \ngeq I'$ or $\underline w \ngeq J'$ holds respectively. And so $P_{\tilde I'}P_{\tilde J'}$ vanishes in $F_{n+1}|_w^v$. Therefore $P_{\tilde I}P_{\tilde J}$ is a monomial in $F_{n+1}|_w^v$. \qed \color{black}

\end{proof}

For the following lemma, recall that for any pair of permutations $(v,w) \in S_{n+1} \times S_{n+1}$, we denote $v_t = w_{t'} = n+1$ and $v_s = w_{s'} = n$. In addition we write $(w_1, \dots, w_k)\!\!\uparrow$ for the ordered list whose elements are $\{w_1, \dots, w_k \}$ taken in increasing order.

\begin{lemma}\label{comp}
Let $v,w \in S_{n+1}$ with $\underline v \le \underline w$. If $\init(I_{n+1})|_{T_w^v}$ is monomial-free then $(v,w)$ is a compatible pair.
\end{lemma}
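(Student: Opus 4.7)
The plan is to prove the contrapositive: assume $(v,w)$ is not compatible, and exhibit a monomial in $\init(I_{n+1})|_{T_w^v}$. Since the definition of $T_w^v$ presupposes $v\le w$ and the hypothesis $\underline v\le\underline w$ then forces $t'\le t$, the case $t=t'$ is automatically compatible; we may therefore assume $t'<t$ and that at least one of the four conditions appearing in case~(ii) of Definition~\ref{def:compatible} fails. My main tool is an explicit family of quadratic binomials in $\init(I_{n+1})=\ker(\phi_{n+1})$, which Theorem~\ref{thm:Pure} ensures generates the initial ideal: for any pair of subsets $I,J\subseteq[n+1]$ with $|I|\le|J|$, the sorting-by-rows pair $I\vee J:=\{\max(i_k,j_k):k\le|I|\}$ (of size $|I|$) and $I\wedge J:=\{\min(i_k,j_k):k\le|I|\}\cup\{j_k:|I|<k\le|J|\}$ (of size $|J|$) consists of genuine sorted subsets, and a direct computation shows $\phi_{n+1}(P_I P_J)=\phi_{n+1}(P_{I\vee J}P_{I\wedge J})$, so $P_I P_J-P_{I\vee J}P_{I\wedge J}\in\init(I_{n+1})$. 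The key observation is that if we can find $I\in T_w^v$ and $J\in S_w^v$ with both $I\vee J$ and $I\wedge J$ in $T_w^v$, then by Notation~\ref{notation:restricted} the restricted binomial collapses to the pure monomial $P_{I\vee J}P_{I\wedge J}\in\init(I_{n+1})|_{T_w^v}$.

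Next I split into four cases according to which compatibility condition fails, and in each case produce such a pair $(I,J)$ by perturbing the extremal subsets $\{v_1,\dots,v_r\}$ (the componentwise smallest size-$r$ element of $T_w^v$) or $\{w_1,\dots,w_r\}$ (the componentwise largest) at the position of the failure. When monotonicity of $w$ on $[t',t]$ fails at smallest index $k$, I take $I=\{v_1,\dots,v_{k-2}\}$ and $J=\{w_1,\dots,w_{k-2},w_k\}$; the inequality $w_{k-1}<w_k$ forces $J\not\le w$, while the componentwise extrema collapse to subsets that still sit inside $T_w^v$. The failure of monotonicity of $v$ is handled dually. If $s'>t$, I take $I=\{v_1,\dots,v_{t-1}\}$ and $J=\{v_1,\dots,v_{t-2}\}\cup\{n,n+1\}$, using that $\{w_1,\dots,w_t\}$ cannot accommodate both $n$ and $n+1$ when $n$ sits at a position in $w$ beyond $t$; the case $t'>s$ is again dual.

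The main obstacle is the verification in each of these four cases that the meet and join genuinely lie in $T_w^v$; this requires carefully exploiting $\underline v\le\underline w$ together with whichever compatibility conditions do still hold to bound the componentwise extrema from both above (by $\{w_1,\dots,w_r\}$) and below (by $\{v_1,\dots,v_r\}$). A subtle point is that when several compatibility conditions fail simultaneously the generic constructions above may need small adjustments (for instance, if $n$ already appears among $\{v_1,\dots,v_{t-2}\}$, the union defining $J$ shrinks by one and $I$ must be reduced accordingly); however, each of the four constructions is designed around a single failing condition, so a one-condition-at-a-time case analysis is enough to conclude.
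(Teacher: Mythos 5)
Your overall strategy coincides with the paper's: prove the contrapositive, split into cases according to which of the four conditions in Definition~\ref{def:compatible}(ii) fails (the case $t=t'$ is automatically compatible), and exhibit a quadratic binomial in $\init(I_{n+1})=\ker(\phi_{n+1})$ one of whose two monomials vanishes on $T_w^v$ while the other survives. The paper's binomials are also of the sorting (row-swap) type you describe, so the class of relations used is the same; what differs is the convention and the concrete choice of subsets. The paper picks $I,J\in T_w^v$ and arranges for the \emph{sorted} pair $(I',J')$ to have a vanishing factor, whereas you pick $I\in T_w^v$, $J\notin T_w^v$ and arrange for the sorted pair $(I\vee J,I\wedge J)$ to survive — these are symmetric variants. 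The concrete sets are genuinely different: the paper perturbs initial segments of a single permutation in each case (of $v$ in Cases 1–3, of $w$ in Case 4), while you mix $v$ and $w$ and lean on the forced facts $n+1\in\{w_1,\dots,w_{t-1}\}$, $n+1\notin\{v_1,\dots,v_{t-1}\}$. Notably, your treatment of the failure $s'>t$ (taking $I=\{v_1,\dots,v_{t-1}\}$, $J=\{v_1,\dots,v_{t-2},n,n+1\}$, whence $I\vee J=\{v_1,\dots,v_{t-2},n\}$ and $I\wedge J=\{v_1,\dots,v_t\}$) completely avoids the $(n-1)$-machinery and the sub-cases 1.1/1.2 in the paper's Case~1; I checked that $I\vee J\le w$ follows from $\underline v\le\underline w$ at level $t-2$ together with $s'>t$, and that the binomial is nontrivial because $s'>t$ with $v\le w$ forces $s>t$, so this simplification is real.

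The gap is exactly what you flag as the ``main obstacle'': in each case the proof \emph{is} the verification that $I\vee J,I\wedge J\in T_w^v$ and $I\vee J\ne I$, and these checks genuinely need $\underline v\le\underline w$ (at a specific level, e.g.\ $k-1$ or $t-2$) plus the non-failing compatibility conditions, in a way that is case-specific rather than uniform — you cannot leave them implicit. Relatedly, the remark about $n$ possibly lying in $\{v_1,\dots,v_{t-2}\}$ is a non-issue for $s'>t$ (one has $s>t$ automatically, as above), but you should derive such forced constraints in each case rather than paper over them with ``small adjustments''; and ``handled dually'' for the $v$-monotonicity and $t'>s$ cases needs to be made precise, e.g.\ via $(v,w)\mapsto(ww_0,vw_0)$ and $K\mapsto w_0K^c$, since the two monotonicity conditions and the two bounds $s'\le t$, $t'\le s$ are not symmetric under a naive swap of $v$ and $w$. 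As written this is a correct plan with a plausibly simpler Case~1, but not yet a proof.
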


\begin{proof}
Note that if $t' = t$ then $(v,w)$ is compatible. Since $\underline v \le \underline w$, we may assume that $t'<t$. We have that $\init(I_{n+1})|_{T_w^v}$ is monomial-free, 
so by Lemma \ref{non toric}, we have that $\init(I_n)|_{T_{\underline w}^{\underline v}}$ is monomial-free. We prove that if $(v, w) \in S_{n+1} \times S_{n+1}$ is not compatible and $v < w$ then $\init(I_{n+1})|_{T_w^v}$ contains a monomial.
Let $v_\ell=n-1=w_{\ell'}$.

\medskip

\textbf{Case 1.} Assume that $t<s'$. By compatibility we have $t'\leq t<s'\leq s$. Suppose that $s' = s$ and $\ell' > s$. Let
\[
I = \{w_1, \dots, w_{s'} \} = \{i_1 < i_2 < \dots < i_{s'-2} < n < n+1\}, \quad
J = \{i_1 < \dots < i_{s'-3} < n-1 \}.
\]
Note that $\ul v \le \ul w$ and $s = s' < \ell'$ therefore $i_{s'-2} < n-1$. And so we have $v \le I, J \le w$. Let
\[
I' = \{i_1 < \dots <i_{s'-3} < n-1 < n < n+1 \}, \quad
J' = \{i_1 < \dots < i_{s'-2}\}.
\]
By construction it is clear that $P_IP_J - P_{I'}P_{J'}$ is a binomial in $\init(I_{n+1})$. Since $n-1 \notin \{w_1, \dots, w_{s'} \}$, it follows that $I' \not\le w$. And so $P_IP_J$ is a monomial in $\init(I_{n+1})|_{T_w^v}$. So we may now assume that either $\ell'\leq s$ or $s' < s$. However, if $s' < s$ then, by induction, we have $\ell'\leq s$. So we assume $\ell' \leq s$.

\medskip

{\bf Case 1.1.} Let $\ell'>s'$. Since $t'\leq t<s'$ and $t'\neq t$, we have $t'<s'-1$. Take 
\vspace{-2mm}
\[
I=(w_1, \ldots, w_{s'})\!\!\uparrow, \quad 
J=\begin{cases} (w_1, \ldots, w_{t'-1}, w_{\ell'})\!\!\uparrow & \text {if}~ t'>1, \\ w_{\ell'} & \text {if}~ t'=1, \end{cases}
\]
\vspace{-3mm}
\[
I'=\begin{cases}(w_1, \ldots, w_{t'-1}, w_{\ell'}, w_{t'}, w_{t'+2}, \ldots, w_{s'})\!\!\uparrow & \text {if}~ t'>1, \\  (w_{\ell'}, w_{t'}, w_{t'+2}, \ldots, w_{s'})\!\!\uparrow & \text {if}~ t'=1 \end{cases} \text{ and }
\]
\vspace{-3mm}
\[
J'=\begin{cases}(w_1, \ldots, w_{t'-1}, w_{t'+1})\!\!\uparrow & \text {if}~ t'>1, \\ w_{t'+1} & \text {if}~ t'=1.  \end{cases}
\]
It is clear from the construction that $P_IP_J-P_{I'}P_{J'}$ is a binomial in $\init(I_{n+1})$ and $v\leq I, J\leq w$. So $P_IP_J$ does not vanish in $\init(I_{n+1})|_{T_w^v}$. Since $I'\nleq w$, then $P_{I'}$ vanishes in $\init(I_{n+1})|_{T_w^v}$. Therefore, $P_IP_J$ is a monomial in $\init(I_{n+1})|_{T_w^v}$.
 
\medskip

\textbf{Case 1.2.}
Let $\ell'<s'$. Let $k= \max\{\ell', t\}$ and $v_{r}=max\{v_i: 1\leq i\leq k, i\neq t\}$. Now define 
\vspace{-2mm}
\[
I=(v_1, \ldots, v_{k})\!\!\uparrow, \quad 
J=(v_1, \dots, v_{r-1}, v_{r+1}, \dots, v_{k-1}, n-1)\!\!\uparrow,
\]
\vspace{-3mm}
\[
I'=(v_1, \ldots, v_{r-1}, v_{r+1}, \ldots, v_{k},  n-1,)\!\!\uparrow
\text{ and } 
J'= (v_1, \dots, v_{k-1})\!\!\uparrow
\]
Consider the tableaux for $P_IP_J$ and $P_{I'}P_{J'}$. Note that all rows are the same except for the $(k-1)^{th}$ row, and in this row we interchange $n-1$ and $v_r$. Since $k<s'\leq s$, $n-1\notin I$ and $P_IP_J-P_{I'}P_{J'}$ is a binomial in $\init(I_{n+1})$, it follows that $v\leq I, J\leq w$ and so $P_IP_J$ does not vanish in $\init(I_{n+1})|_{T_w^v}$.
 Since $k<s'$ and $n-1\in I'$, we see that $I'\nleq w$. So $P_{I'}$ vanishes in $\init(I_{n+1})|_{T_w^v}$.  

\medskip
{\bf Case 2.} Assume that $t'>s$. Then we have $t\geq t'>s\geq s'$. Let $v_r=max\{v_i: 1\leq i\leq t', i\neq s\}$. We define 
\vspace{-2mm}
\[
I=\begin{cases}(v_1, \ldots, v_{s-1}, n+1, v_{s+1}, \ldots, v_{t'})\!\!\uparrow & \text {if}~ s>1, \\    
(n+1, v_{s+1}, \ldots, v_{t'})\!\!\uparrow & \text {if}~ s=1,  \end{cases} \quad J=(v_1, \ldots, v_{t'-1})\!\!\uparrow,
\]
\vspace{-3mm}
\[
I'=(v_1, \ldots,v_{r-1}, v_{r+1}, \ldots, v_{t'}, n+1)\!\!\uparrow \text{ and }
J'=\begin{cases}(v_1, \ldots, v_{s-1}, v_{s+1}, \ldots, v_{t'-1}, v_r) \!\!\uparrow  & \text {if}~ s>1, \\ (v_{s+1}, \ldots, v_{t'-1}, v_r) \!\!\uparrow & \text {if}~ s=1. \end{cases}
\]
By construction, $P_IP_J-P_{I'}P_{J'}$ is a binomial in $\init(I_{n+1})$. Since $n-1, n \notin J'$ and $t'>s$, we have $v\nleq J'$ and $P_IP_J$ is a monomial in $\init(I_{n+1})|_{T_w^v}$.
\color{black}

\medskip

{\bf Case 3.} Assume that there exists $t'\leq k < t$ such that $v_k>v_{k+1}$.
We take
\vspace{-2mm}
\[
I=\begin{cases}(v_1, \ldots, v_{k-1}, v_{k+1}, n)\!\!\uparrow & \text {if}~ k>1, \\   (v_{k+1}, n)\!\!\uparrow & \text {if}~ k=1,  \end{cases} \quad J=(v_1, \ldots, v_{k})\!\!\uparrow,
\]
\vspace{-2mm}
\[
I'=\begin{cases}(v_1, \ldots,v_{k-1}, v_{k}, n)\!\!\uparrow & \text {if}~ k>1, \\ (v_{k}, n)\!\!\uparrow & \text {if}~ k=1, \end{cases} \text{ and } J'=(v_1, \ldots, v_{k-1}, v_{k+1}) \!\!\uparrow.
\]
 
Then $P_IP_J-P_{I'}P_{J'}$ is a binomial in $\init(I_{n+1})$. Since $v_{k+1}>v_k$, we have $v\leq I, J$ and $v\nleq J'$. Now we show that $I\leq w$. Since $\underline v \leq \underline w$ and $v_{k+1}>v_k$, we have 
\vspace{-2mm}
$$(v_1, v_2, \ldots, v_{k-1}, v_{k+1})\!\!\uparrow \leq (w_1, \ldots, w_{t'-1}, w_{t'+1}, \ldots, w_k, w_{k+1})\!\!\uparrow .$$
Then $(v_1, v_2, \ldots, v_{k-1}, v_{k+1}, n)\!\!\uparrow \leq (w_1, \ldots, w_k, w_{k+1})\!\!\uparrow$.   Then we see that $P_IP_J$ is non-zero and $P_{J'}$ vanishes in $\init(I_n)|_{T_w^v}$. Hence, $P_IP_J$ is a monomial in $\init(I_{n+1})|_{T_w^v}$. 
 
\medskip

{\bf Case 4.} Assume that there exists $t'< k\leq t$ with $w_k<w_{k+1}$.  In this case, we choose:
\vspace{-2mm}
\[
I=(w_1, \ldots, w_{k})\!\!\uparrow, \quad
J=\begin{cases}(w_1, \ldots, w_{t'-1}, w_{t'+1}, \ldots, w_{k-1}, w_{k+1})\!\!\uparrow & \text{if}~ t'>1, \\
(w_2, \ldots, w_{k-1}, w_{k+1})\!\!\uparrow & \text{if}~ t'=1,
\end{cases} 
\]
\vspace{-2mm}
\[
I'=(w_1, \ldots,w_{k-1}, w_{k+1})\!\!\uparrow \text{ and } J'=\begin{cases}
(w_1, \!\!\ldots, w_{t'-1}, w_{t'+1}, \ldots, w_{k-1}, w_{k})\!\!\uparrow & \text{if}~ t'> 1, \\
(w_2, \ldots, w_{k-1}, w_{k}) & \text{if}~ t'=1. \end{cases}
\]
 
It is easy to see that $P_IP_J-P_{I'}P_{J'}$ is a binomial in $\init(I_{n+1})$.
Since $w_{k}<w_{k+1}$, we have $v\leq I\leq w$ and $J\leq w$. Now we show that $v\leq J$. Since $\underline v \leq \underline w$ and $w_{k+1}>w_k$, we have $(v_1, v_2, \ldots, v_{k-1})\!\!\uparrow \leq (w_1, \ldots, w_{t'-1}, w_{t'+1}, \ldots, w_{k-1}, w_{k+1})\!\!\uparrow$. Note that $P_IP_J$ is non-zero and $P_{I'}$ vanishes in $\init(I_{n+1})|_{T_w^v}$. Then $P_IP_J$ is a monomial in $\init(I_{n+1})|_{T_w^v}$. 
 
Hence, we conclude that if $(v, w)$ is not compatible, then $(v, w) \not\in \MT_{n+1}$, as desired. 
\end{proof}

\begin{lemma}\label{lem:compatible_is_monomial_free}
Let $v,w \in S_{n+1}$. If $\init(I_n)|_{T_{\underline w}^{\underline v}}$ is monomial-free and $(v,w)$ is a compatible pair then $\init(I_{n+1})|_{T_w^v}$ is monomial-free.
\end{lemma}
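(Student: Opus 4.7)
\subsection*{Proof proposal.}
The plan is to prove the contrapositive. By Lemma~\ref{lem:elim_ideal_gen_set}(ii) and Theorem~\ref{thm:Pure}, any monomial in $\init(I_{n+1})|_{T_w^v}$ must arise from a quadratic binomial $f = P_I P_J - P_{I'} P_{J'}$ in a generating set of $\init(I_{n+1}) = \ker\phi_{n+1}$ with $I, J \in T_w^v$ and $\{I', J'\} \not\subseteq T_w^v$. The goal is to produce, from such an $f$, a witness of a monomial in $\init(I_n)|_{T_{\underline w}^{\underline v}}$, contradicting the inductive hypothesis. Throughout, write $v_t = w_{t'} = n+1$ with $t' \le t$ (by compatibility) and $v_s = w_{s'} = n$.

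The descent is by ``removing $n+1$''. The identity $\phi_{n+1}(P_K) = x_{1,k_1} \cdots x_{|K|,k_{|K|}}$ shows that the equality $\phi_{n+1}(P_I P_J) = \phi_{n+1}(P_{I'} P_{J'})$ encodes a matching of the column-row pairs arising from the occurrences of $n+1$ among $I, J, I', J'$. Setting $\bar K := K \setminus \{n+1\}$, this matching guarantees that $P_{\bar I} P_{\bar J} - P_{\bar I'} P_{\bar J'}$ lies in $\ker\phi_n = \init(I_n)$.

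The second step is to show that $\bar I, \bar J \in T_{\underline w}^{\underline v}$. The guiding principle is that for $K \subseteq [n+1]$ with $n+1 \notin K$, the constraint $v \le K$ forces $|K| < t$ (otherwise the maximum of $\{v_1, \ldots, v_{|K|}\}$ would be $n+1$, requiring $n+1 \in K$); under this size constraint $\underline v \le K$ is automatic since $\underline v$ agrees with $v$ on the first $t-1$ positions. A parallel statement for $K \le w$ uses the compatibility monotonicity $w_{t'} > w_{t'+1} > \cdots > w_t$, which controls how the sorted version of $\{w_1, \ldots, w_{|K|}\}$ changes when $n+1 = w_{t'}$ is removed and $w_{|K|+1}$ is shifted in. When $n+1 \in K$, the symmetric runs $v_{t'} < v_{t'+1} < \cdots < v_t$ and the corresponding condition on $w$ play the same role. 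Applying these translations to $I$ and $J$ (with a case split on whether $n+1$ belongs to them) gives $\underline v \le \bar I, \bar J \le \underline w$.

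The hard part will be the third step: ensuring that $\{\bar I', \bar J'\} \not\subseteq T_{\underline w}^{\underline v}$ and that the descended binomial does not collapse to zero. Assuming without loss of generality $I' \notin T_w^v$, I would split on whether $n+1 \in I'$: if not, the translation above shows directly that $\bar I' = I'$ still fails against $\underline v$ or $\underline w$; if so, the compatibility monotonicity is applied in reverse to transport the failure from $I'$ to $\bar I'$, exploiting the fact that the positions of $n+1$ in $I$ and $I'$ must match under the binomial relation. The degenerate possibility $(\bar I, \bar J) = (\bar I', \bar J')$ can only occur when the four subsets coincide outside $\{n+1\}$, and here a direct analysis using the structural restrictions from compatibility on which subsets of a given size containing $n+1$ can sit inside $T_w^v$ (essentially those whose sizes lie in $[t', t]$ and which are compatible with the monotone runs) forces $I', J' \in T_w^v$, contradicting the assumption. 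Combining all cases yields the required monomial $P_{\bar I} P_{\bar J}$ in $\init(I_n)|_{T_{\underline w}^{\underline v}}$, contradicting the hypothesis and closing the induction.
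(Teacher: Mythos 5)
Your proposal attempts a contrapositive argument via a single ``remove $n+1$'' descent, whereas the paper's proof is a direct argument using a genuinely different and more elaborate induction. Concretely, the paper uses a \emph{double} induction (on $n$ and, crucially, on $\dim X_w^v$), and in its Cases 2--4 it never descends to $S_n$ at all: it replaces $(v,w)$ by the permutations $(v',w)$ or $(v,w')$ from Lemma~\ref{lem:inductive_vw}, which remain in $\MT_{n+1}$ but index a lower-dimensional Richardson variety, and then applies the claim from the proof of Theorem~\ref{thm:ssyt_fl_diag} together with Propositions~\ref{prop:unified} and~\ref{prop:block_max_order}. Your plan never uses these tools, and this omission is where the gaps open up.

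There are two concrete issues. First, in your Step 2 you treat the preservation of membership in $T_w^v$ under the descent as if it followed directly from the monotone runs in the definition of compatibility. It does not: preservation of $\underline v \le \bar I$ when $n+1\in I$ and $t'\le |I| < t$ requires knowing that $v_{|I|} = \max\{v_1,\dots,v_{|I|}\}$, which is a consequence of Proposition~\ref{prop:block_max_order} for the \emph{maximum block} (and hence of the full inductive hypothesis $(v,w)\in\MT_{n+1}$), not of compatibility alone. For instance, $(v,w) = \bigl((4,3,1,2,6,5),(4,5,6,3,2,1)\bigr)$ is a compatible pair in $S_6$, and $I = \{1,2,3,6\}$ satisfies $v \le I \le w$, yet $\bar I = \{1,2,3\}$ fails $\underline v \le \bar I$; the example is only excluded because $(\underline v,\underline w)$ is not compatible, i.e.\ because the whole chain of induced pairs must be compatible. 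You use this silently, but never set up the argument by contradiction (assume $\init(I_n)|_{T_{\underline w}^{\underline v}}$ monomial-free, deduce $(v,w)\in\MT_{n+1}$ via Lemma~\ref{comp}, invoke the block structure) that would legitimize the appeal to these facts. Second, your treatment of the degenerate case is incomplete: you discuss only $(\bar I,\bar J) = (\bar I',\bar J')$, which as you note forces $I = I'$, $J = J'$ and is harmless. But the descended binomial also collapses when $\{\bar I,\bar J\} = \{\bar J',\bar I'\}$ as a multiset, e.g.\ when $I = J'\cup\{n+1\}$ and $I' = J\cup\{n+1\}$ with $J\ne J'$; this is a genuine nontrivial binomial upstairs that descends to zero, and the claim that it ``forces $I',J'\in T_w^v$'' requires its own argument, which you do not supply. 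Resolving both points essentially requires importing the block-structure machinery and the dimension induction that the paper uses, so as written the proposal has a genuine gap rather than being a simpler alternative.
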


\begin{proof}
We proceed by double induction, first on $n$ and secondly on the dimension of the Richardson variety. By Lemma~\ref{comp}, we may assume that for all $v,w \in S_k$ where $k \le n$ we have $\init(I_k)|_{T_w^v}$ is monomial-free if and only if $(v,w)$ is a compatible pair and $\init(I_{k-1})|_{T^{\underline v}_{\underline w}}$ is monomial-free.
Fix $v,w \in S_{n+1}$ and let $P_IP_J - P_{I'} P_{J'}$ be a binomial in the ideal $\init(I_{n+1})$ and assume that $P_IP_J$ does not vanish in $\init(I_{n+1})|_{T_w^v}$. Without loss of generality we assume that $|I| = |I'| \ge |J| = |J'|$. We will show that $P_{I'}P_{J'}$ does not vanish in $\init(I_{n+1})|_{T_w^v}$ by taking cases on $t, t'$.

\textbf{Case 1.} Assume that $t,t'$ both lie in one of the following sets: $\{1, \dots, |J|\},  \{|J|+1, \dots, |I| \}$ or $\{|I|+1, \dots, n+1 \}$. For each of these cases we deduce immediately which sets $I, I', J, J'$ contain $n+1$. By removing $n+1$ from these sets we obtain $\underline I, \underline I', \underline J, \underline J'$. We have that $P_{\underline I} P_{\underline J} - P_{\underline I'}P_{\underline J'}$ is a binomial in $\init(I_n)$. Since $P_I P_J$ does not vanish in $\init(I_{n+1})|_{T_w^v}$, by construction we have that $P_{\underline I}P_{\underline J}$ does not vanish in $\init(I_n)|_{T_{\underline w}^{\underline v}}$. Since $(\underline v, \underline w) \in \MT_n$ we have that $P_{\underline I'}P_{\underline J'}$ does not vanish in $\init(I_n)|_{T_{\underline w}^{\underline v}}$. By construction it follows that $P_{I'} P_{J'}$ does not vanish in $\init(I_{n+1})|_{T_w^v}$.

\textbf{Case 2.} Assume that $t' \in \{|J|+1, \dots, |I| \}$ and $t \in \{|I|+1, \dots, n+1\}$.

\textbf{Case 2.1} Assume that $n+1 \in I$.
By Case i. of the claim on page~\pageref{claim:compatible_v_w}, 
 we have that $v' = (v_1, \dots, v_{t-2}, v_t, v_{t-1}, v_{t+1}, \dots, v_{n+1}) \le I$. Since $|J| < |I|$ we have $v' \le J$. By Lemma~\ref{lem:inductive_vw} we have that $(v',w) \in \MT_{n+1}$ and so by induction on the dimension $P_{I'}P_{J'}$ does not vanish in $\init(I_{n+1})|_{T_w^{v'}}$. Therefore, $v < v' \le I', J' \le w$ and so $P_{I'}P_{J'}$ does not vanish in $\init(I_{n+1})|_{T_w^v}$.

\textbf{Case 2.2} Assume that $n+1 \notin I$. By Case ii. of the claim on page~\pageref{claim:compatible_v_w}, we have that $I \le w' = (w_1, \dots, w_{t'-1}, w_{t'+1}, w_{t'}, v_{t+1}, \dots, w_{n+1})$. Since $|J| < |I|$ we have $J \le w'$. By Lemma~\ref{lem:inductive_vw} we have that $(v,w') \in \MT_{n+1}$ and so by induction on the dimension $P_{I'}P_{J'}$ does not vanish in $\init(I_{n+1})|_{T_{w'}^{v}}$. Therefore, $v \le I', J' \le w' < w$ and so $P_{I'}P_{J'}$ does not vanish in $\init(I_{n+1})|_{T_w^v}$.

\textbf{Case 3.} Assume that $t' \in \{1, \dots, |J| \}$ and $t \in \{|J|+1, \dots, |I| \}$. This case is identical to Case 2, where we can use a similar argument to show that either $v' \le J$ if $n+1 \in J$ or $J \le w'$ if $n+1 \notin J$.

\textbf{Case 4.} Assume that $t' \in \{1, \dots, |J| \}$ and $t \in \{|I|+1, \dots, n+1 \}$. If $t' < |J|$ or $t > |I|+1$ then we can use the construction of $v', w'$ as above and conclude the result by induction on the dimension of $X_w^v$. So we may assume that $t' = |J|$ and $t = |I|+1$. In this case we write $I = \{i_1 < \dots < i_{|I|} \}$ and $J = \{j_1 < \dots < j_{|J|} \}$. 

Let $(v,w)_i^j$ be the maximum block of $(v,w)$. Then we have by Proposition~\ref{prop:block_max_order} that $v_i < \dots < v_t = n+1$ and by Proposition~\ref{prop:unified} we have $v_k < v_i$ for all $k < i$. Since $t' = |J|$ it follows that $i \le |J|$. And so ordering the first $t$ elements of $v$ we get
\[
\{v_1, \dots, v_t \} = \{\widehat v_1 < \dots < \widehat v_t \} =  \{\widehat v_1 < \dots < \widehat v_{i-1} < v_i < \dots < v_t \}.
\]
And so for all $k$ we have $\widehat v_k \le i_k$ and $\widehat v_k \le j_k$, i.e.~the ordering of the first $|I|$ elements of $v$ coincides with ordering of the first $|J|$ elements of $v$. Since the tableaux representing $P_IP_J$ and $P_{I'} P_{J'}$ are row-wise equal, it follows that $v \le I'$ and $v \le J'$.

Next we show that $I' \le w$ and $J' \le w$. By Proposition~\ref{prop:block_max_order} we have $w_{t'} > \dots > w_j$ and by Proposition~\ref{prop:unified} we have that $w_k < w_j$ for all $k > j$. 

\smallskip

\textbf{Claim.} For any $K \subseteq [n+1]$: $K \le w$ if and only if $K^c := [n+1] \backslash K \ge w w_0 = (w_{n+1}, w_n, \dots, w_1)$.

To prove the claim, it is an easy observation that $K \le w$ if and only if $(K^\uparrow, (K^c)^\uparrow) \le w$. Then $(K^\uparrow,  (K^c)^\uparrow) \le w$ if and only if $(K^\uparrow,  (K^c)^\uparrow) w_0 \ge w w_0$. Explicitly we have $(K^\uparrow,  (K^c)^\uparrow) w_0 = ((K^c)^\downarrow,  K^\downarrow)$. And so $(K^\uparrow,  (K^c)^\uparrow) w_0 \ge w w_0$ if and only if $K^c \ge w w_0$. This completes the proof of the claim.

Since the tableaux representing $P_I P_J$ and $P_{I'} P_{J'}$ are row-wise equal, it follows that the tableaux representing $P_{I^c} P_{J^c}$ and $P_{I'^c} P_{J'^c}$ are also row-wise equal. By the claim above we have that $ w w_0 \le I'^c$ and $ w w_0 \le J'^c$. And so we have $I' \le w$ and $J' \le w$.
\end{proof}

\section{Toric degenerations 
}\label{sec:toric}
Recall the ideals $\init(I_n)|_{T_w^v}$, $\init(I(X_w^v))$ and $\ker{(\phi_n|_w^v)}$ 
from Sections~\ref{subsec:flag} and \ref{subsec:X}.
In this section, we will study  
the relationships between these ideals with the goal of understanding the initial ideal $\init(I(X_w^v))$. When this ideal is toric, we obtain a toric degeneration of the Richardson variety $X_w^v$ inside the flag variety.
We recall, by Theorem~\ref{thm:Pure}, that the initial ideal $\init(I_n)$ is quadratically generated and is the kernel of the monomial map $\phi_n$ in \eqref{eqn:monomialmapflags}.
We will see that if the ideal $\init(I_n)|_{T_w^v}$ is monomial-free then $\init(I(X_w^v))$ is quadratically generated.
Furthermore, we prove that if $\init(I_n)|_{T_w^v}$ is monomial-free then 
$\init(I_n)|_{T_w^v} = \init(I(X_w^v))$ and
$\init(I_n)|_{T_w^v}$ is a toric ideal.
We prove this by showing that if $\init(I_n)|_{T_w^v}$ is monomial-free then it is equal to the ideal $\ker(\phi_n|_w^v)$.

\begin{lemma}\label{lem:J_1=J_3_binomial_flag}\label{lem:J_1_subset_J_2_flag} 
We have the following:

\begin{itemize}
    \item[{\rm (i)}] The ideal $\init(I_n)|_{T_w^v}$ is monomial-free if and only if it coincide with $\ker{(\phi_n|_w^v)}$. 
    \item[{\rm (ii)}] $\init(I_n)|_{T_w^v}\subseteq\init(I(X_w^v))$.
\end{itemize}
\end{lemma}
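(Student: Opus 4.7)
My plan is to treat the two parts separately, using the explicit description of $\init(I_n)|_{T_w^v}$ given by Theorem~\ref{thm:Pure} together with Lemma~\ref{lem:elim_ideal_gen_set}. In particular, $\init(I_n) = \ker(\phi_n)$ is generated by the quadratic binomials from a Gröbner basis of $I_n$, and $\init(I_n)|_{T_w^v}$ is generated by the images of these binomials under the substitution $P_J \mapsto 0$ for $J \in S_w^v$. Each such image is either $0$, a binomial $P_IP_J - P_{I'}P_{J'}$ (when all four subsets lie in $T_w^v$), or a pure monomial $\pm P_IP_J$ (when exactly one of the two sides has a vanishing variable).

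For part (i), the ``$\Leftarrow$'' direction is immediate: the kernel of a monomial map is a prime, hence monomial-free, binomial ideal. For the ``$\Rightarrow$'' direction I would prove the two inclusions. The inclusion $\ker(\phi_n|_w^v) \subseteq \init(I_n)|_{T_w^v}$ does not require the monomial-free hypothesis: if $f \in \KK[P_J : J \in T_w^v]$ satisfies $\phi_n|_w^v(f)=0$, then viewing $f$ in the ambient ring we have $\phi_n(f)=0$, so $f \in \ker(\phi_n) = \init(I_n)$, and then Lemma~\ref{lem:elim_ideal_gen_set}(i) places $f$ in $\init(I_n)|_{T_w^v}$. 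For the reverse inclusion, take a generator $\hat g$ of $\init(I_n)|_{T_w^v}$ arising from a quadratic binomial $g = P_IP_J - P_{I'}P_{J'} \in \init(I_n)$. If both monomials survive restriction, then $\hat g = g$ is already in $\ker(\phi_n|_w^v)$ since $\phi_n|_w^v$ agrees with $\phi_n$ on the surviving variables; if only one monomial survives, then $\hat g$ is a monomial, contradicting the hypothesis; and if neither survives, $\hat g = 0$. Hence the monomial-free hypothesis forces every generator into $\ker(\phi_n|_w^v)$.

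For part (ii), I would lift each generator of the restricted ideal back to the Plücker ideal and then re-restrict. Given $\hat g \in \init(I_n)|_{T_w^v}$ coming from $g \in \init(I_n)$, choose $G \in I_n$ with $\init_{\mathbf{w}_M}(G) = g$ (which exists by Theorem~\ref{thm:Pure} together with the Gröbner basis theorem). Let $\tilde G$ be obtained from $G$ by setting $P_J = 0$ for all $J \in S_w^v$. Then $\tilde G = G - (G - \tilde G)$ lies in $I_n + \langle P_J : J \in S_w^v\rangle$ and in $\KK[P_J : J \in T_w^v]$, so by \eqref{eq:restric} we have $\tilde G \in I(X_w^v)$. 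The remaining step is to identify the initial term: the terms of $G$ not in $g$ have strictly larger weight, so after restriction they still have strictly larger weight than the surviving part of $g$, which equals $\hat g$ by construction. Consequently $\init_{\mathbf{w}_M}(\tilde G) = \hat g$ (this includes the zero case trivially), which gives the desired containment.

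The main technical nuisance will be part (ii), specifically verifying the weight comparison after restriction: one must check that no cancellation promotes a higher-weight term of $G$ to become the initial of $\tilde G$. This follows because the weight ordering depends only on which Plücker variables appear in a monomial, so dropping monomials containing a variable in $S_w^v$ never decreases the minimum weight of the surviving part. The binomial-versus-monomial bookkeeping in part (i) is then a direct case analysis using the explicit form of the quadratic generators.
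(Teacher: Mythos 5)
Your proof is correct and follows essentially the same route as the paper: part (i) via the observation that the kernel of a monomial map is monomial-free together with the fact that surviving quadratic binomials from $G$ are killed by $\phi_n|_w^v$, and part (ii) by lifting each restricted generator to $I_n$, restricting, and noting that the weight comparison survives the deletion of vanishing terms. You are actually slightly more careful than the paper's written proof in two spots: you explicitly supply the inclusion $\ker(\phi_n|_w^v) \subseteq \init(I_n)|_{T_w^v}$ (which holds unconditionally via Lemma~\ref{lem:elim_ideal_gen_set}(i) and which the paper leaves implicit after showing only $\init(I_n)|_{T_w^v} \subseteq \ker(\phi_n|_w^v)$), and you justify precisely why $\init_{\mathbf{w}_M}(\tilde G) = \hat g$, a step the paper states without the weight argument.
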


\begin{proof}
By Theorem~\ref{thm:Pure}, there exists a set $G$ of quadratic binomials which generate the initial ideal $\init(I_n)$ and by Lemma~\ref{lem:elim_ideal_gen_set}(ii), the ideal $\init(I_n)|_{T_w^v}$ is generated by $G|_{T_w^v}$. See \eqref{eq:G|T}.

(i) First note that $\phi_n|_w^v$ is a monomial map, hence its kernel does not contain any monomials. So, if the ideal $\init(I_n)|_{T_w^v}$ contains a monomial then it is not equal to $\ker{(\phi_n|_w^v)}$.
Now assume that the ideal $\init(I_n)|_{T_w^v}$ does not contain any monomials, therefore the set $G|_{T_w^v}$ does not contain any monomials.
Since all binomials $m_1 - m_2 \in G|_{T_w^v}$ lie in $\init(I_n)$ and contain only the non-vanishing Pl\"ucker variables $P_J$ for $J \in T_w^v$, therefore $m_1 - m_2 \in \ker{(\phi_n|_w^v)}$. And so we have $\init(I_n)|_{T_w^v} \subseteq \ker{(\phi_n|_w^v)}$. Thus the proof of (i) follows.

(ii) Since $\init(I_n)|_{T_w^v} = \langle G|_{T_w^v}\rangle$, we take $\hat g \in G|_{T_w^v}$. So we have that $g \in G$ and 
there exists $f \in I(X_w^v)$ such that $\init(f) = g$. 
The terms of $\hat g$ are precisely the non-vanishing terms of the initial terms of $f$. Therefore, $\hat g = \init(\hat f) \in \init(I(X_w^v))$.
This completes the proof of lemma.
\end{proof}

\begin{theorem}\label{thm:toric_degen}
If the ideal $\init(I_n)|_{T_w^v}$ is monomial-free, then 
$\init(I(X_w^v))$ is a toric ideal and it provides a toric degeneration of the Richardson variety $X_w^v$. 
\end{theorem}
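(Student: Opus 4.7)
The plan is to establish the equality $\init(I(X_w^v)) = \init(I_n)|_{T_w^v}$ via a Hilbert function comparison, after which both the toricity of $\init(I(X_w^v))$ and the existence of the toric degeneration follow at once.

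By Lemma~\ref{lem:J_1=J_3_binomial_flag}(ii) we already have the inclusion $\init(I_n)|_{T_w^v} \subseteq \init(I(X_w^v))$, so the natural graded surjection
\[
A := \KK[P_J : J \in T_w^v]/\init(I_n)|_{T_w^v} \twoheadrightarrow \KK[P_J : J \in T_w^v]/\init(I(X_w^v)) =: B
\]
reduces equality of the ideals to the dimension identity $\dim_\KK A_d = \dim_\KK B_d$ for every $d$. Because Gr\"obner degenerations preserve Hilbert functions, $\dim_\KK B_d$ equals the dimension of the degree $d$ component of the homogeneous coordinate ring of $X_w^v$. The monomial-free hypothesis triggers Theorem~\ref{thm:toric_fl_rich_diag}, forcing $(v,w) \in \MT_n$, and Theorem~\ref{thm:ssyt_fl_diag} then furnishes a standard monomial basis indexed by $SSYT_d(v,w)$; thus $\dim_\KK B_d = |SSYT_d(v,w)|$.

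For $\dim_\KK A_d$, I would apply Lemma~\ref{lem:J_1=J_3_binomial_flag}(i) to identify $A$ with $\KK[P_J : J \in T_w^v]/\ker(\phi_n|_w^v)$, which in turn is isomorphic to the $\KK$-subalgebra of $\KK[x_{i,j}]$ generated by the leading monomials $\init_M(P_J) = x_{1,j_1}\cdots x_{k,j_k}$ for $J = \{j_1 < \dots < j_k\} \in T_w^v$. A $\KK$-basis of this subalgebra in degree $d$ is given by the distinct products $\prod_{i=1}^{d}\init_M(P_{J_i})$, and by the Hibi-ring description of $\init(I_n)$ recorded in Theorem~\ref{thm:Pure} (and \cite[Theorem~14.16]{miller2004combinatorial}) such products are in bijection with semi-standard Young tableaux whose columns all lie in $T_w^v$, that is, with elements of $SSYT_d(v,w)$. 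This yields $\dim_\KK A_d = |SSYT_d(v,w)|$ as required.

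Combining the two counts we obtain $A = B$ and hence $\init(I(X_w^v)) = \init(I_n)|_{T_w^v} = \ker(\phi_n|_w^v)$. Being the kernel of a monomial map, this ideal is prime and binomially generated, hence toric. The one-parameter flat family defined by the weight vector ${\bf w}_M$ on $I(X_w^v)$ therefore has $X_w^v$ as its generic fibre and an irreducible toric variety as its special fibre, i.e.\ it is a toric degeneration of $X_w^v$. I expect the main obstacle to be the Hibi-type Hilbert function identity $\dim_\KK A_d = |SSYT_d(v,w)|$: one must check that the bijection between monomials in the image of $\phi_n$ and semi-standard Young tableaux restricts faithfully to tableaux with columns in $T_w^v$, which is precisely the place where the monomial-free assumption (equivalently $(v,w) \in \MT_n$) is indispensable, and it is also what allows the surjection $A \twoheadrightarrow B$ to be upgraded to an isomorphism rather than merely a dominant map.
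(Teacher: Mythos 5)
Your proposal is correct and follows essentially the same route as the paper: establish $\init(I_n)|_{T_w^v}\subseteq\init(I(X_w^v))$ via Lemma~\ref{lem:J_1_subset_J_2_flag}(ii), identify $\init(I_n)|_{T_w^v}$ with $\ker(\phi_n|_w^v)$ via Lemma~\ref{lem:J_1=J_3_binomial_flag}(i), count $SSYT_d(v,w)$ on both sides using Theorem~\ref{thm:ssyt_fl_diag} and flatness of Gr\"obner degenerations, and conclude equality. The paper phrases the dimension comparison as a linear-independence argument rather than as a surjection of graded rings, and dispatches the Hibi/row-sorting step you flag as the main obstacle in a single sentence, but the logical content and the place where the monomial-free hypothesis is invoked are the same.
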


\begin{proof}
Let us consider $\MCM \subseteq R:=\KK[P_J: J\in T_w^v]$ be a collection of monomials which are linearly independent in $R/ \init(I(X_w^v))$. If the image of $\MCM$ in $R/ \init(I_n)|_{T_w^v}$ is a linearly dependent subset, then we have $\sum_{m \in M} c_m m \in \init(I_n)|_{T_w^v}$ for some $c_m \in \mathbb K$. So by Lemma~\ref{lem:J_1_subset_J_2_flag} we have  $\sum_{m \in M} c_m m \in \init(I(X_w^v))$,
and so the image of $\MCM$ in $R/\init(I(X_w^v))$ is linearly dependent, a contradiction.
Moreover, since  $\init(I_n)|_{T_w^v}$ is monomial-free we have that $\init(I_n)|_{T_w^v} = \ker{(\phi_n|_w^v)}$. 
Hence, for all $d \ge 1$, any standard monomial basis for $R/\init(I(X_w^v))$ of degree $d$ is linearly independent in $R/ \init(I_n)|_{T_w^v} = R/ \ker{(\phi_n|_w^v)}$. 
Note that $\ker(\phi_n|_w^v)$ is generated by binomials which correspond to pairs of row-wise equal tableaux, whose columns $I$ satisfy $v \le I \le w$. It is easy to see that every tableau is row-wise equal to a unique semi-standard Young tableau. Therefore, the semi-standard Young tableaux $SSYT_d(v,w)$ form a monomial basis for the degree $d$ part of $R / \ker(\phi_n|_w^v)$.

Note that any Gr\"obner degeneration gives rise to a flat family, so the Hilbert polynomials of all fibers are identical. By Theorem~\ref{thm:ssyt_fl_diag}, the semi-standard Young tableaux $SSYT_d(v,w)$ form a standard monomial basis for $R / I(X_w^v)$.
Thus, the dimension of the degree $d$ part of $R / \init(I(X_w^v)$ is equal to $|SSYT_d(v,w)|$. And so $\init(I_n)|_{T_w^v} = \ker(\phi_n|_w^v) = \init(I(X_w^v))$. 
\end{proof}

\begin{remark}
\label{sec:compareKim's}
Our methods can be used to produce other toric degenerations of Richardson varieties with respect to different weight vectors. For example, in \cite{kim2015richardson}, Kim considers a weight on the polynomial ring $\KK[x_{i,j}]$ such that the leading term of any minor $\varphi_n(P_J)$, see \eqref{eq:map}, is the antidiagonal term. 
Explicitly, if $M'$ is the weight on $\KK[x_{i,j}]$ then \[
\init_{M'}(\varphi_n(P_J)) = x_{1, j_t} x_{2, j_{t-1}} \dots x_{t, j_1}\text{ for every } J = \{j_1 < \dots < j_t\}.
\]
We write ${\bf w}_{M'}$ for the weight on the ring $\KK[P_J : J \subseteq [n]]$ induced by $M'$.
In \cite{kim2015richardson}, the pairs of permutations $(v,w)$ are labelled by collections of so-called \emph{pipe dreams}. 
Each pair of reduced pipe dreams associated to $(v,w)$ gives rise to a face of the Gelfand-Tsetlin polytope. 
If $(v,w)$ is labelled by a unique pair of reduced pipe dreams, then the initial ideal $\init_{{\bf w}_{M'}}(I(X_w^v))$ is toric.
However, if there are multiple pairs of pipe dreams associated to $(v,w)$, then this method cannot differentiate between toric and non-toric initial ideals. For each $(v,w) \in S_4 \times S_4$ we have calculated the ideals $\init_{{\bf w}_{M'}}(I_4)|_{T_w^v}$. 
We have confirmed that Theorem~\ref{thm:toric_degen} holds in all these cases, i.e.~if $\init_{{\bf w}_{M'}}(I_4)|_{T_w^v}$ is monomial-free then the initial ideal $\init_{{\bf w}_{M'}}(I(X_w^v))$ is toric. Our calculations are displayed in Table~\ref{table:flag_4}. The symbol $*$ appears in the table beside pairs of permutations for which the description by pipe dreams does not determine whether the corresponding ideal is toric or non-toric.

In many cases, it is possible to give an explicit description of the polytopes associated to toric degenerations. Any toric variety whose ideal is of the form $\init_{{\bf w}_{M'}}(I(X_w^v))$, for some $v$ and $w$, is a toric subvariety of the toric variety associated to the Gelfand-Tsetlin polytope. Therefore, the toric polytope associated to $\init_{{\bf w}_{M'}}(I(X_w^v))$ is a face of the Gelfand-Tsetlin polytope. Suppose that $\init_{{\bf w}_{M'}}(I(X_w^v))$ is toric. On the one hand, if there is a unique pair of reduced pipe dreams associated to $(v,w)$, then this face of the Gelfand-Tsetlin polytope is determined uniquely and is described in terms of Gelfand-Tsetlin patterns in \cite{kim2015richardson}. On the other hand, if there does not exist a unique pair of pipe dreams associated to $(v,w)$, then determining the face of the Gelfand-Tsetlin polytope is a difficult computational task. 

Computing polytopes of toric degenerations of the flag variety  $\Flag_n$ for large $n$ is already very difficult. For example, all toric degenerations via Gr\"obner degenerations have been calculated up to $\Flag_5$, see \cite{bossinger2017computing}. One approach to understand these toric polytopes, and by extension their faces, is to first consider the Grassmannian and its toric degenerations studied in \cite{OllieFatemeh, bossinger2021families}. The vertices of the corresponding toric polytopes can be read directly from the monomial map, analogous to (\ref{eqn:monomialmapflags}). In \cite{clarke2020combinatorial}, the authors study these polytopes using \textit{combinatorial mutations} which preserve many important properties of the polytope, such as its \textit{Ehrhart function}. However, in the forthcoming work \cite{clarke2021combinatorial, clarke2022}, the authors note that the monomial map (\ref{eqn:monomialmapflags}) does not immediately give rise to the toric polytope. Instead, they give a combinatorial analogue to: embedding products of projective varieties into higher dimensional projective spaces, for polytopes. We give an example of this procedure below.
\end{remark}

\begin{table}
    \centering  
    \resizebox{0.8\textwidth}{!}{
    \begin{tabular}{|c|c|c|}
        \hline
        ((1, 2, 3, 4), (1, 4, 2, 3)) \textcolor{white}{$*$}   &((2, 3, 1, 4), (4, 3, 1, 2)) \textcolor{white}{$*$}   &((1, 2, 3, 4), (1, 2, 4, 3)) \textcolor{white}{$*$} \\
        ((1, 2, 3, 4), (1, 4, 3, 2)) \textcolor{white}{$*$}   &((2, 3, 1, 4), (4, 3, 2, 1)) \textcolor{white}{$*$}   &((1, 2, 3, 4), (1, 3, 2, 4)) $*$ \\
        ((1, 2, 3, 4), (3, 1, 2, 4)) \textcolor{white}{$*$}   &((2, 3, 4, 1), (4, 2, 3, 1)) $*$                      &((1, 2, 3, 4), (2, 1, 3, 4)) \textcolor{white}{$*$} \\
        ((1, 2, 3, 4), (3, 2, 1, 4)) \textcolor{white}{$*$}   &((2, 3, 4, 1), (4, 3, 2, 1)) \textcolor{white}{$*$}   &((1, 2, 3, 4), (2, 1, 4, 3)) \textcolor{white}{$*$} \\
        ((1, 2, 3, 4), (4, 1, 2, 3)) \textcolor{white}{$*$}   &((3, 1, 2, 4), (4, 1, 2, 3)) \textcolor{white}{$*$}   &((1, 2, 4, 3), (2, 1, 4, 3)) $*$ \\
        ((1, 2, 3, 4), (4, 1, 3, 2)) \textcolor{white}{$*$}   &((3, 1, 2, 4), (4, 1, 3, 2)) \textcolor{white}{$*$}   &((1, 3, 4, 2), (1, 4, 3, 2)) $*$ \\
        ((1, 2, 3, 4), (4, 2, 1, 3)) \textcolor{white}{$*$}   &((3, 1, 2, 4), (4, 2, 1, 3)) \textcolor{white}{$*$}   &((1, 4, 2, 3), (1, 4, 3, 2)) $*$ \\
        ((1, 2, 3, 4), (4, 3, 1, 2)) \textcolor{white}{$*$}   &((3, 1, 2, 4), (4, 3, 1, 2)) \textcolor{white}{$*$}   &((2, 1, 3, 4), (2, 1, 4, 3)) \textcolor{white}{$*$} \\
        ((1, 2, 3, 4), (4, 3, 2, 1)) \textcolor{white}{$*$}   &((3, 1, 2, 4), (4, 3, 2, 1)) \textcolor{white}{$*$}   &((2, 3, 1, 4), (3, 2, 1, 4)) \textcolor{white}{$*$} \\
        ((1, 3, 2, 4), (1, 4, 2, 3)) $*$                      &((3, 1, 4, 2), (4, 1, 3, 2)) \textcolor{white}{$*$}   &((2, 3, 4, 1), (2, 4, 3, 1)) $*$\\
        ((1, 3, 2, 4), (1, 4, 3, 2)) $*$                      &((3, 2, 1, 4), (4, 2, 1, 3)) \textcolor{white}{$*$}   &((2, 3, 4, 1), (3, 2, 4, 1)) $*$\\
        ((2, 1, 3, 4), (3, 1, 2, 4)) \textcolor{white}{$*$}   &((3, 2, 1, 4), (4, 3, 1, 2)) \textcolor{white}{$*$}   &((3, 1, 2, 4), (3, 2, 1, 4)) \textcolor{white}{$*$} \\
        ((2, 1, 3, 4), (3, 2, 1, 4)) \textcolor{white}{$*$}   &((3, 2, 1, 4), (4, 3, 2, 1)) \textcolor{white}{$*$}   &((3, 4, 1, 2), (3, 4, 2, 1)) $*$ \\
        ((2, 1, 3, 4), (4, 1, 2, 3)) \textcolor{white}{$*$}   &((3, 2, 4, 1), (4, 2, 3, 1)) $*$                      &((3, 4, 1, 2), (4, 3, 1, 2)) \textcolor{white}{$*$} \\
        ((2, 1, 3, 4), (4, 1, 3, 2)) \textcolor{white}{$*$}   &((3, 2, 4, 1), (4, 3, 2, 1)) \textcolor{white}{$*$}   &((3, 4, 2, 1), (4, 3, 2, 1)) \textcolor{white}{$*$} \\
        ((2, 1, 3, 4), (4, 2, 1, 3)) \textcolor{white}{$*$}   &((4, 1, 2, 3), (4, 3, 1, 2)) \textcolor{white}{$*$}   &((4, 1, 2, 3), (4, 1, 3, 2)) \textcolor{white}{$*$} \\
        ((2, 1, 3, 4), (4, 3, 1, 2)) \textcolor{white}{$*$}   &((4, 1, 2, 3), (4, 3, 2, 1)) \textcolor{white}{$*$}   &((4, 1, 2, 3), (4, 2, 1, 3)) \textcolor{white}{$*$} \\
        ((2, 1, 3, 4), (4, 3, 2, 1)) \textcolor{white}{$*$}   &((4, 2, 1, 3), (4, 3, 1, 2)) \textcolor{white}{$*$}   &((4, 2, 3, 1), (4, 3, 2, 1)) $*$ \\
        ((2, 3, 1, 4), (2, 4, 1, 3)) \textcolor{white}{$*$}   &((4, 2, 1, 3), (4, 3, 2, 1)) \textcolor{white}{$*$}   &((4, 3, 1, 2), (4, 3, 2, 1)) \textcolor{white}{$*$} \\
        ((2, 3, 1, 4), (4, 2, 1, 3)) \textcolor{white}{$*$}   & &\\
        \hline
    \end{tabular}
    } \\ \caption{The list of pairs of permutations $(v,w)$ leading to toric degenerations of $X_w^v$ with respect to the antidiagonal term order in $\Flag_4$.}
    \label{table:flag_4}
\end{table}

\begin{example}\label{example:richardson_polytope}
Let $v = (2,3,4,1)$ and $w = (4,2,3,1)$. Let us consider the antidiagonal term order from Remark~\ref{sec:compareKim's}.
We have that $((2,3,4,1),(4,2,3,1))$ is contained in Table~\ref{table:flag_4}, and so the Richardson variety $X_w^v$ degenerates to a toric variety, which we will call $T$.
However, the entry has an asterisk beside it in Table~\ref{table:flag_4}, 
so it is not possible to immediately determine the polytope associated to this degeneration of $X_w^v$ using pipe dreams from \cite{kim2015richardson}. 
Let us calculate the polytope associated to this degeneration following \cite{clarke2021combinatorial, clarke2022}.

Recall that, via the Pl\"ucker embedding, the Flag variety $\Flag_4$ is a subvariety of the product of projective spaces $\PP^3 \times \PP^5 \times \PP^3$. The coordinates for each projective space are $P_I$ where $I \subseteq [n]$ is a fixed size. For instance $[P_1, P_2, P_3, P_4]$ are the homogeneous coordinates for the first copy of $\PP^3$. The Richardson variety $X_w^v$ and its toric degeneration $T$ naturally live in the product of projective spaces $\PP^{2} \times \PP^{1} \times \PP^{0}$, with coordinates given by the non-vanishing variables which are $
[P_2, P_3, P_4], [P_{23}, P_{24}]$ and $[P_{234}]$, respectively. The ideal of $T \subseteq \PP^2 \times \PP^1 \times \PP^0$ is the kernel of the monomial map $\varphi_4$, see Remark~\ref{sec:compareKim's}, restricted to the non-vanishing variables. We can write $\varphi_4$ as the following integer matrix, note that only the non-zero rows are included:
\vspace{-1mm}\[
A = \ \
\bordermatrix{
& P_2 & P_3 & P_4 & P_{23} & P_{24} & P_{234} \cr
x_2 &1& & & & & \cr
x_3 & &1& &1& & \cr
x_4 & & &1& &1&1\cr
y_2 & & & &1&1& \cr
y_3 & & & & & &1\cr
z_2 & & & & & &1\cr
}
.
\]
The image of $\PP^2 \times \PP^1 \times \PP^0$ under the Segre embedding is a toric subvariety $Y \subseteq \PP^5$. We label the coordinates for $\PP^5$ by the corresponding products of variables: $P_2P_{23}P_{234}, P_{2}P_{24}P_{234}$ and so on.
The monomial map, with matrix $S$, associated to $Y$ sends each coordinate of $\PP^5$ to the product of Pl\"ucker variables which it is indexed by. Explicitly, the columns of $S$ are the exponent vectors of the products of variables: 
\[
S = \ \
\bordermatrix{
& P_2 P_{23} P_{234} & 
  P_2 P_{24} P_{234} & 
  P_3 P_{23} P_{234} &
  P_3 P_{24} P_{234} & 
  P_4 P_{23} P_{234} & 
  P_4 P_{24} P_{234}  \cr
P_2     &1&1& & & & \cr
P_3     & & &1&1& & \cr
P_4     & & & & &1&1\cr
P_{23}  &1& &1& &1& \cr
P_{24}  & &1& &1& &1\cr
P_{234} &1&1&1&1&1&1\cr
}
.
\]
Consider the image of $T$ under the Segre embedding. The ideal of $T$ embedded in $\PP^5$ is the kernel of the following monomial map, whose matrix is the product of $A$ and $S$:
\[
AS = \ \
\bordermatrix{
& P_2 P_{23} P_{234} & 
  P_2 P_{24} P_{234} & 
  P_3 P_{23} P_{234} &
  P_3 P_{24} P_{234} & 
  P_4 P_{23} P_{234} & 
  P_4 P_{24} P_{234}  \cr
x_2 &1&1& & & & \cr
x_3 &1& &2&1&1& \cr
x_4 &1&2&1&2&2&3\cr
y_2 &1&1&1&1&1&1\cr
y_3 &1&1&1&1&1&1\cr
z_2 &1&1&1&1&1&1\cr
}
.
\]
The polytope $\mathcal{P}$ corresponding to this toric variety is the convex hull of the columns of $AS$.
It is easy to see that $\mathcal{P}$ is $2$-dimensional by projecting it to the
coordinates indexed by $x_2, x_3$ and $x_4$. Moreover, $\mathcal{P}$ lives in the $2$-dimensional affine subspace $\{(i,j,k) : i+j+k = 3\}$. See Figure~\ref{fig:richardson_polytope}. 

\begin{figure}
    \centering
    \includegraphics{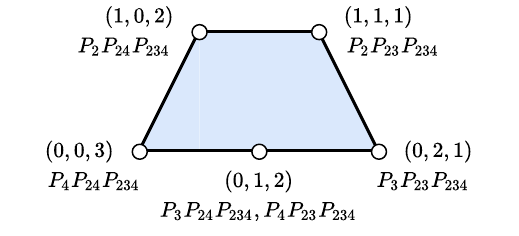}
    \caption{Projection of the polytope $\mathcal P$ of the toric variety in Example~\ref{example:richardson_polytope}. The illustration includes all lattice points of the polytope, which are labelled by the corresponding coordinates of $\PP^5$.}
    \label{fig:richardson_polytope}
\end{figure}
\end{example}

\smallskip
\noindent{\bf Acknowledgement.} NC was supported by the SFB/TRR 191 ``Symplectic structures in Geometry, Algebra and Dynamics''.
He gratefully acknowledges support from the Max Planck Institute for Mathematics in Bonn, and the EPSRC Fellowship EP/R023379/1 who supported his multiple visits to Bristol. 
OC was supported by EPSRC Doctoral Training Partnership 
award EP/N509619/1.
FM was 
supported 
by EPSRC  
Fellowship EP/R023379/1, the BOF grant 
BOF/STA/201909/038, and the FWO  (project no. G023721N and G0F5921N).


\bibliographystyle{abbrv} 
\bibliography{Flag-Trop1.bib}

\begin{thebibliography}{10}

\bibitem{Chary_Ollie_Fatemeh}
N.~C. Bonala, O.~Clarke, and F.~Mohammadi.
\newblock Standard monomial theory and toric degenerations of {R}ichardson
  varieties in the {G}rassmannian.
\newblock {\em Journal of Algebraic Combinatorics}, 54(4):1159--1183, 2021.

\bibitem{bossinger2017computing}
L.~Bossinger, S.~Lamboglia, K.~Mincheva, and F.~Mohammadi.
\newblock Computing toric degenerations of flag varieties.
\newblock In {\em Combinatorial algebraic geometry}, pages 247--281. Springer,
  2017.

\bibitem{bossinger2021families}
L.~Bossinger, F.~Mohammadi, A.~N{\'a}jera~Ch{\'a}vez, et~al.
\newblock Families of gröbner degenerations, {G}rassmannians and universal
  cluster algebras.
\newblock {\em SIGMA. Symmetry, Integrability and Geometry: Methods and
  Applications}, 17:059, 2021.

\bibitem{caldero2002toric}
P.~Caldero.
\newblock Toric degenerations of {S}chubert varieties.
\newblock {\em Transformation Groups}, 7(1):51--60, 2002.

\bibitem{clarke2021combinatorial}
O.~Clarke, A.~Higashitani, and F.~Mohammadi.
\newblock Block diagonal polytopes for flag varieties and their combinatorial
  mutations.
\newblock {\em In preparation}, 2021.

\bibitem{clarke2020combinatorial}
O.~Clarke, A.~Higashitani, and F.~Mohammadi.
\newblock Combinatorial mutations and block diagonal polytopes.
\newblock {\em Collectanea Mathematica}, pages 1--31, 2021.

\bibitem{OllieFatemeh}
O.~Clarke and F.~Mohammadi.
\newblock Toric degenerations of {G}rassmannians and {S}chubert varieties from
  matching field tableaux.
\newblock {\em Journal of Algebra}, 559:646--678, 2020.

\bibitem{OllieFatemeh3}
O.~Clarke and F.~Mohammadi.
\newblock Standard monomial theory and toric degenerations of {S}chubert
  varieties from matching field tableaux.
\newblock {\em Journal of Symbolic Computation}, 104:683--723, 2021.

\bibitem{OllieFatemeh2}
O.~Clarke and F.~Mohammadi.
\newblock Toric degenerations of flag varieties from matching field tableaux.
\newblock {\em Journal of Pure and Applied Algebra}, 225(8):106624, 2021.

\bibitem{clarke2022}
O.~Clarke, F.~Mohammadi, and F.~Zaffalon.
\newblock Toric degenerations of partial flag varieties and combinatorial
  mutations of matching field polytopes.
\newblock {\em In preparation}, 2021.

\bibitem{deodhar1985some}
V.~Deodhar.
\newblock On some geometric aspects of {B}ruhat orderings. {I}. {A} finer
  decomposition of {B}ruhat cells.
\newblock {\em Inventiones mathematicae}, 79(3):499--511, 1985.

\bibitem{gonciulea1996degenerations}
N.~Gonciulea and V.~Lakshmibai.
\newblock Degenerations of flag and {S}chubert varieties to toric varieties.
\newblock {\em Transformation Groups}, 1(3):215--248, 1996.

\bibitem{M2}
D.~R. Grayson and M.~E. Stillman.
\newblock Macaulay2, a software system for research in algebraic geometry.
\newblock Available at {http://www.math.uiuc.edu/Macaulay2/}.

\bibitem{hibi1987distributive}
T.~Hibi.
\newblock Distributive lattices, affine semigroup rings and algebras with
  straightening laws.
\newblock In {\em Commutative Algebra and Combinatorics}, pages 93--109, 1987.

\bibitem{hodge1943some}
W.~V.~D. Hodge.
\newblock Some enumerative results in the theory of forms.
\newblock In {\em Mathematical Proceedings of the Cambridge Philosophical
  Society}, volume~39, pages 22--30, 1943.

\bibitem{kim2015richardson}
G.~Kim.
\newblock Richardson varieties in a toric degeneration of the flag variety.
\newblock {\em Thesis (Ph.D.) - University of Michigan. 82 pp. ISBN:
  978-1339-03949-7}, 2015.

\bibitem{KOGAN}
M.~Kogan and E.~Miller.
\newblock Toric degeneration of {S}chubert varieties and {G}elfand-{T}setlin
  polytopes.
\newblock {\em Advances in Mathematics}, 193(1):1--17, 2005.

\bibitem{kreiman2002richardson}
V.~Kreiman and V.~Lakshmibai.
\newblock Richardson varieties in the {G}rassmannian.
\newblock {\em arXiv preprint math/0203278}, 2002.

\bibitem{lakshmibai2003richardson}
V.~Lakshmibai and P.~Littelmann.
\newblock Richardson varieties and equivariant {K}-theory.
\newblock {\em Journal of Algebra}, 260(1):230--260, 2003.

\bibitem{miller2004combinatorial}
E.~Miller and B.~Sturmfels.
\newblock {\em Combinatorial commutative algebra}, volume 227.
\newblock Springer Science \& Business Media, 2004.

\bibitem{richardson1992intersections}
R.~Richardson.
\newblock Intersections of double cosets in algebraic groups.
\newblock {\em Indagationes Mathematicae}, 3(1):69--77, 1992.

\bibitem{seshadri2016introduction}
C.~S. Seshadri.
\newblock {\em Introduction to the theory of standard monomials}, volume~46.
\newblock Springer.

\bibitem{willis2011tableau}
M.~Willis.
\newblock A direct way to find the right key of a semistandard young tableau.
\newblock {\em Annals of Combinatorics}, 17, 10 2011.

\end{thebibliography}

\bigskip
\noindent
\footnotesize {\bf Authors' addresses:}

\medskip

\noindent 
Ruhr-Universit\"at Bochum,
Fakult\"at f\"ur Mathematik, D-44780 Bochum, Germany
\\
\noindent  E-mail address: {\tt  narasimha.bonala@rub.de}
\medskip

\noindent 
University of Bristol, School of Mathematics,
BS8 1TW, Bristol, UK
\\
\noindent  E-mail address: {\tt oliver.clarke@bristol.ac.uk}

\medskip

\noindent 
Department of Mathematics: Algebra and Geometry, Ghent University, 9000 Ghent, Belgium \\
Department of Mathematics and Statistics, 
UiT – The Arctic University of Norway, 9037 Troms\o, Norway
\\ E-mail address: {\tt fatemeh.mohammadi@ugent.be}

\end{document}